\theoremstyle{plain}
\newtheorem{theorem}{Theorem}[section]
\newtheorem{proposition}[theorem]{Proposition}
\newtheorem{lemma}[theorem]{Lemma}
\theoremstyle{remark}
\newtheorem{remark}[theorem]{Remark}
\numberwithin{equation}{section}
\title[Inhomogeneity of Isoparametric Hypersurfaces of OT-FKM-type]{Inhomogeneity of Isoparametric Hypersurfaces of OT-FKM-type in the Pseudo-Sphere}
\author[Y. Sasahara]{Yuta Sasahara}
\subjclass{Primary 53C50; Secondary 53C30.}
\keywords{isoparametric hypersurface, OT-FKM-type, homogeneity, pseudo-Riemannian space form}
\thanks{This work was supported by JST SPRING, Grant Number \textsf{JPMJSP2156}.}
\address{Department of Mathematical Sciences, Graduate School of Science, Tokyo Metropolitan University, Minami-Osawa 1-1, Hachioji, Tokyo, 192-0397, Japan}
\email{sasahara-yuta@ed.tmu.ac.jp, ysasahara0804+res@gmail.com}
\begin{document}

\begin{abstract}
    We study isoparametric hypersurfaces, whose principal curvatures are all constant, in the pseudo-Riemannian space forms.
    In this paper, we investigate two topics.
    Firstly, according to representations of Clifford algebras, we give a construction of Clifford systems of signature $(m, r)$ for any $(m, r)$ explicitly.
    Secondly, we show that a (connected) isoparametric hypersurface of OT-FKM-type whose focal variety is $M_{+}$ in the pseudo-sphere is inhomogeneous if the signature $(m, r)$ of its Clifford system on $\mathbb{R}^{2l}_{s}$ satisfies $m\equiv 0\pmod{4}$, $r\equiv 0\pmod{2}$ and $l>m$, showing that each connected component of $M_{+}$ is inhomogeneous.
\end{abstract}

\maketitle

\section{Introduction}
The study of isoparametric hypersurfaces in the pseudo-Riemannian space forms has been started in the 1980s.
In \cite{MR728336}, Nomizu introduced the notion of isoparametric for space-like hypersurfaces in the Lorentzian space forms.
In \cite{MR753432}, Hahn gave the definition of isoparametric hypersurfaces which was generalized from Nomizu's definition: a non-degenerate hypersurface in the pseudo-Riemannian space form is isoparametric if its principal curvatures (the eigenvalue functions of its shape operator) are all constant.
In general, the study of hypersurfaces in the pseudo-Riemannian space forms is much more difficult than that in the Riemannian space forms since their shape operator is not necessarily real-diagonalizable.
Their principal curvatures may be complex numbers.

After \cite{MR753432}, classification problems of Lorentzian isoparametric hypersurfaces have been developed in \cite{MR783023}, \cite{MR1696128} and \cite{MR3077208}.
In these papers, the authors adopted a definition which was different from Hahn's definition: a non-degenerate hypersurface in the pseudo-Riemannian space forms is isoparametric if its shape operator has the constant minimal polynomial, which was first introduced by Magid.
Hahn's former definition follows from Magid's latter definition.
However, the hypersurface in \cite[2.5]{MR753432} is isoparametric as Hahn's definition and is not isoparametric as Magid's definition.

The purpose of this paper is to investigate two topics of isoparametric hypersurfaces in the pseudo-Riemannian space forms.

Firstly, we give a construction of Clifford systems of signature $(m, r)$ for any $(m, r)$, where $m$ is an integer larger than $1$ and $r\in \{0, 1, 2, \ldots, m\}$, in Section 3.
Ozeki and Takeuchi gave inhomogeneous examples of isoparametric hypersurfaces in the sphere in \cite{MR454888}-\cite{MR454889}.
Ferus, Karcher and M\"unzner generalized the construction in \cite{MR624227}. 
In \cite{MR753432}, Hahn extended the construction to that in the pseudo-sphere and gave a great number of examples of such hypersurfaces.
Actually, these hypersurfaces are also isoparametric as Magid's definition since their shape operators are real-diagonalizable or complex-diagonalizable by \cite[2.7.(3), 3.4.(6), 3.5.(1), 3.6, 3.7]{MR753432}.
A hypersurface obtained by using the construction is called an isoparametric hypersurface of OT-FKM-type.
We need to use a Clifford system to construct isoparametric hypersurfaces of OT-FKM-type.
We can obtain Clifford systems of signature $(m, r)$ by using representations of Clifford algebras.

Secondly, we show that a (connected) isoparametric hypersurface of OT-FKM-type whose focal variety is $M_{+}$ in the pseudo-sphere is inhomogeneous if the signature $(m, r)$ of its Clifford system on $\mathbb{R}^{2l}_{s}$ satisfies $m\equiv 0\pmod{4}$, $r\equiv 0\pmod{2}$ and $l>m$, showing that each connected component of $M_{+}$ is inhomogeneous, in Section 4.
In Riemannian case, an isoparametric hypersurface of OT-FKM-type is inhomogeneous in infinitely many cases.
We can use the classification of homogeneous hypersurfaces in the sphere to investigate it.
In pseudo-Riemannian case, we do not know clearly whether an isoparametric hypersurface of OT-FKM-type is homogeneous or not, since we do not know the classification of homogeneous hypersurfaces in the pseudo-sphere.
By the method in \cite[5.8]{MR624227}, we provide a geometric proof of inhomogeneity in that case.
In this paper, we focus on $M_{+}$, not on $M_{-}$.

And lastly, in Section 5, we investigate the diffeomorphism type of inhomogeneous isoparametric hypersurfaces of OT-FKM-type whose Clifford systems are obtained in Section 3.

The author is grateful to Prof. Takashi Sakai for helpful discussions.

\section{Preliminaries}
In this section, we prepare our notations and state basic facts of isoparametric hypersurfaces of OT-FKM-type.
Throughout this paper, we suppose that all of manifolds and mappings are smooth unless otherwise noted.
We use notations in Table \ref{tab:c-n}.
\begin{table}[h]
\caption{conventional notations}
\label{tab:c-n}
\centering
\begin{tabular}{ccl}
    $\delta_{ij}$&:& the Kronecker delta\\
    ${}^{t}\!A$&:& the transpose of a matrix $A$\\
    $E_{n}$&:& the identity matrix of order $n$, where $n$ is a positive integer\\
    $A\oplus B$&:& the direct sum of matrices $A$ and $B$\\
    $A\otimes B$&:& the Kronecker product of matrices $A$ and $B$\\
    $M\cong M'$&:& a manifold $M$ is diffeomorphic to a manifold $M'$\\
    $J_{i, j}:=(-E_{i})\oplus E_{j}$&:& $J_{0, j}$ and $J_{i, 0}$ mean $E_{j}$ and $-E_{i}$, respectively.
\end{tabular}
\end{table}
\subsection{The Pseudo-Riemannian Space Forms and Submanifolds}
Let $n$ be an integer larger than 1 and $s \in \{0, 1, \ldots, n\}$.
Let $\mathbb{R}^{n}_{s}$ be an $n$-dimensional pseudo-Euclidean space provided with the pseudo-inner product $\langle u, v\rangle={}^{t}\! uJ_{s, n-s}v$ for $u, v\in \mathbb{R}^{n}_{s}$.
We define the pseudo-sphere $S^{n}_{s}:=\left\{x\in \mathbb{R}^{n+1}_{s}\ \middle|\ \langle x,x\rangle=1\right\}$ and the pseudo-hyperbolic space $H^{n}_{s}:=\left\{x\in \mathbb{R}^{n+1}_{s+1}\ \middle|\ \langle x,x\rangle=-1\right\}$ of index $s$.
The spaces $S^{n}_{s}$, $\mathbb{R}^{n}_{s}$ and $H^{n}_{s}$ are called the pseudo-Riemannian space forms, whose sectional curvatures of any non-degenerate tangent plane at each point are constantly $\kappa=1, 0$ and $-1$, respectively.
The $n$-dimensional pseudo-Riemannian space form of index $s$ with constant sectional curvature $\kappa$ is denoted by $N^{n}_{s}(\kappa)$.
Let $\bar{\nabla}$ be the natural Levi-Civita connection of $N(\kappa)$.

We fix $x\in N(\kappa)$.
Let $v\in T_{x}N(\kappa)$.
We set $\tau:=\kappa\cdot \langle v, v\rangle$.
The curve $\gamma_{x, v}\colon \mathbb{R}\to N(\kappa)$ is given by 
\begin{equation*}
    \gamma_{x, v}(t)
    :=
    \begin{cases}
        \cos{\left(\sqrt{\tau} t\right)}x
        +
        \left(\sqrt{\tau}\right)^{-1}
        \sin{\left(\sqrt{\tau} t\right)}v
        ,
        &
        (\tau>0),
        \\
        x+tv
        ,
        &
        (\tau=0),
        \\
        \cosh{\left(\sqrt{-\tau} t\right)}x
        +
        \left(\sqrt{-\tau}\right)^{-1}
        \sinh{\left(\sqrt{-\tau} t\right)}v
        ,
        &
        (\tau<0).
    \end{cases}
\end{equation*}
Then $\gamma_{x, v}$ satisfies $\gamma_{x, v}(0)=x$, $\gamma_{x, v}'(0)=v$ and $\bar{\nabla}_{\gamma_{x, v}'}\gamma_{x, v}'=0$.
Thus, $\gamma_{x, v}$ is a geodesic in $N(\kappa)$.
Hence, $N(\kappa)$ is geodesically complete.
We usually write $\exp_{x}(tv)=\gamma_{x, v}(t)$.

Let $n$ be a positive integer and $s \in \{0, 1, \ldots, n+1\}$. Let $M\subset N^{n+1}_{s}(\kappa)$ be a non-degenerate submanifold.
The induced metric of $M$ is also denoted by $\langle \cdot , \cdot \rangle$.
Let $\iota\colon M\hookrightarrow N(\kappa)$ be the inclusion map. 
We then have the orthogonal decomposition
$
    \iota^{*}TN(\kappa)=TM\oplus T^{\bot}M 
$
as vector bundles.
Let 
\begin{equation*}
    \mathrm{Sym}(M):=\left\{F\in \mathrm{End}(TM) \ \middle|\ \forall X, Y\in \Gamma(TM),\ \langle FX, Y\rangle=\langle X, FY\rangle\right\}
    .
\end{equation*}
If the index of $M$ is $d$, a matrix representation $Q$ of $F\in \mathrm{Sym}(M)$ satisfies ${}^{t}\!Q=J_{d, \dim{M}-d}QJ_{d, \dim{M}-d}$.
Let $\eta$ be a normal vector field on $M$.
We define the shape operator $S_{\eta}\in \mathrm{End}(TM)$ associated to $\eta$ given by $S_{\eta}(X)$ is the tangent component of $-\bar{\nabla}_{X}\eta\in \Gamma(\iota^{*}TN(\kappa))$ for $X\in \Gamma(TM)$.
Then we have $S_{\eta}\in \mathrm{Sym}(M)$.

\subsection{Isoparametric Hypersurfaces}
From now on, let $M^{n}\subset N(\kappa)^{n+1}_{s}$ be a non-degenerate hypersurface and suppose that there is a unit normal vector field $\xi$ on $M$.
Let $\nu:=\langle \xi, \xi\rangle$ and $\delta:=\kappa\cdot \nu\in \{-1, 0, 1\}$.
We call $\delta$ the type of $M$.
If $\nu=1$, the index of $M$ is $s$ and then $s\leqq n$ is necessary.
If $\nu=-1$, the index of $M$ is $s-1$ and then $s\geqq 1$ is necessary.

The connected hypersurface $M$ is called \textit{isoparametric} if its principal curvatures (the eigenvalue functions of $S_{\xi}$) are all constant.
Let $M\subset N^{n+1}_{s}(-1)=H^{n+1}_{s}$ be an isoparametric hypersurface of type $\delta$.
Referring to \cite[2.1]{MR753432}, by replacing the metric $\langle \cdot, \cdot\rangle$ of $N^{n+1}_{s}(-1)$ with $-\langle\cdot, \cdot\rangle$, we obtain the isoparametric hypersurface $(M, -\langle\cdot, \cdot\rangle)\subset N^{n+1}_{n+1-s}(1)=S^{n+1}_{n+1-s}$ of type $\delta$.
Thus, it is sufficient that we study isoparametric hypersurfaces in the pseudo-Riemannian space forms of constant curvature $\kappa=0, 1$.
In this paper, we consider the case of $\kappa=1$ mainly.

A function $f\colon N^{n+1}_{s}(\kappa)\to \mathbb{R}$ is isoparametric if there exist functions $\Phi, \Psi\colon \mathbb{R}\to \mathbb{R}$ such that $\langle \mathrm{grad}^{N}(f), \mathrm{grad}^{N}(f)\rangle=\Phi\circ f$ and $\Delta^{N}(f)=\Psi\circ f$ hold, where $\mathrm{grad}^{N}(f)$ and $\Delta^{N}(f)$ are the gradient and the Laplacian of $f$ in $N^{n+1}_{s}(\kappa)$, respectively.
For an isoparametric function $f$, we set 
$
    \mathrm{W}_{\mathrm{RN}}(f):=
    \left\{
        c\in f\left(N(\kappa)^{n+1}_{s}\right)
        \ \middle|\ 
        \Phi(c)\neq 0
    \right\}
$.
By \cite[2.2]{MR753432}, if $f\colon N^{n+1}_{s}(\kappa)\to \mathbb{R}$ is isoparametric and $c\in \mathrm{W}_{\mathrm{RN}}(f)$, then each connected component of $f^{-1}(c)\subset N^{n+1}_{s}(\kappa)$ is an isoparametric hypersurface.

\subsection{Clifford Systems}
\label{sect:C-S}
Let $l$ be an positive integer and $s\in \{0, 1, 2, \ldots, 2l\}$.
The identity transformation of $\mathrm{Sym}(\mathbb{R}^{2l}_{s})$ is denoted by $I$.
Let $m$ be an integer larger than $1$ and $r\in \{0, 1, 2, \ldots, m\}$.
A Clifford system of signature $(m, r)$ on $\mathbb{R}^{2l}_{s}$ is a family $\{P_{i}\}_{i=1}^{m}$ such that $P_{i}\in \mathrm{Sym}(\mathbb{R}^{2l}_{s})\ (\forall i\in \{1, \ldots, m\})$ and
$
    P_{i}P_{j}+P_{j}P_{i}=2\eta_{ij}I
    \ (\forall i, j\in \{1, \ldots, m\})
$
hold where $(\eta_{ij})=J_{r, m-r}$.

We give a pseudo-metric on $\mathrm{Sym}(\mathbb{R}^{2l}_{s})$ by
$\langle Q, Q'\rangle:=(2l)^{-1}\mathrm{trace}(QQ')$ for $Q, Q'\in \mathrm{Sym}(\mathbb{R}^{2l}_{s})$.
Let $\Sigma:=\mathrm{span}\left\{P_{i}\ \middle|\ i\in \{1, \ldots, m\}\right\}$.
Referring to \cite[3.1]{MR753432}, we have some properties of Clifford systems:
\begin{enumerate}[label=(\arabic*)]
    \item The Clifford system $\{P_{i}\}_{i=1}^{m}$ is a pseudo-orthonormal basis of $\Sigma$.
    \item Every pseudo-orthonormal basis of $\Sigma$ is a Clifford system of signature $(m, r)$.
    \item \label{change-PQ}
    For $Q, Q'\in \Sigma$, $QQ'+Q'Q=2\langle Q, Q'\rangle I$
    holds.
    \item \label{metric}
    For $Q, Q'\in \Sigma,\ x\in \mathbb{R}^{2l}_{s}$, 
    $\langle Qx, Q'x\rangle=\langle Q, Q'\rangle\langle x, x\rangle$
    holds.
    \item \label{index-equiv}If $r>0$, then $s=l$.
    \item \label{func-depend}The function $H\colon \mathbb{R}^{2l}_{s}\to \mathbb{R}$ is given by
    \begin{equation*}
        H(x)
        :=
        \sum_{j=1}^{m}\eta_{jj}
        \langle P_{j}x, x\rangle^{2}
        ,
    \end{equation*}
    which depends only on $\Sigma$, not on $\{P_{i}\}_{i=1}^{m}$.
\end{enumerate}

A square matrix $A$ of order $(i+j)$ is a pseudo-orthogonal matrix of signature $(i, j)$ if ${}^{t}\!AJ_{i, j}A=J_{i, j}$ holds.
The set of all pseudo-orthogonal matrices of signature $(i, j)$ is denoted by $O(i, j)$.
For example, $Q\in \Sigma$ satisfying $\langle Q, Q\rangle=1$ is in $O(s, 2l-s)$.
For a pseudo-orthonormal basis $\{Q_{i}\}_{i=1}^{m}$ of $\Sigma$, there exists $A\in O(r, m-r)$ such that 
$
    \begin{bmatrix}
        Q_{1} & Q_{2} & \cdots & Q_{m}
    \end{bmatrix}
    =
    \begin{bmatrix}
        P_{1} & P_{2} & \cdots & P_{m}
    \end{bmatrix}
    A
$.
Let $O(j):=O(0, j)$.

\subsection{Isoparametric Hypersurfaces of OT-FKM-Type}
\label{sect:IHoO}
The homogeneous function $F\colon \mathbb{R}^{2l}_{s}\to \mathbb{R}$ of degree $4$ is given by
$
    F(x)
    :=
    \langle x, x\rangle^{2}
    -
    2
    H(x)
    .
$
Let $f:=\left.F\right|_{S^{2l-1}_{s}}$.
Referring to \cite[3.2]{MR753432}, $f$ is isoparametric.
Let $c\in \mathrm{W}_{\mathrm{RN}}(f)=f(S^{2l-1}_{s})\setminus \{-1, 1\}$ and $M_{c}:=f^{-1}(c)\subset S^{2l-1}_{s}$.
Then each connected component of $M_{c}$ is an isoparametric hypersurface, which is called an isoparametric hypersurface of OT-FKM-type.
In Riemannian case, that is, in the case where $s=0$ and $r=0$, we note that $M_{c}$ is connected.
(We refer to \cite[3.6, Connectedness of the level sets of $F$]{MR3408101}.)
We note that $\kappa=1$ here.
The type of $M_{c}$ is $\delta=1$ if $c\in (-1, 1)$, and $\delta=-1$ if $c\in (-\infty, -1)\cup(1, \infty)$.
Referring to \S\ref{sect:C-S}\ref{func-depend}, $\{M_{c}\}_{c\in \mathrm{W}_{\mathrm{RN}}(f)}$ depends only on $\Sigma$, not on $\{P_{i}\}_{i=1}^{m}$.
We note that $s\leqq 2l-1$.

Let $c\in \mathrm{W}_{\mathrm{RN}}(f)$ and $x\in M_{c}$.
Referring to \cite[3.3.(1)]{MR753432}, the unit normal vector $\xi_{x}$ at $x$ is
\begin{equation}
    \xi_{x}
    =
    \frac{1}{\sqrt{\delta(1-c^2)}}
    \left\{
        (1-c)x
        -
        2
        \sum_{j=1}^{m}\eta_{jj}
        \langle P_{j}x, x\rangle
        P_{j}x
    \right\}
    \in T^{\bot}_{x}M_{c}
    ,\quad
    \langle \xi_{x}, \xi_{x}\rangle=\delta
    .
    \label{eq:nv}
\end{equation}

\subsection{The Focal Variety}
\label{sect:F-V}
Let 
\begin{align*}
    M_{+}
    &:=
    \left\{
        x\in S^{2l-1}_{s}
        \ \middle|\ 
        \left(
            \mathrm{grad}^{S^{2l-1}_{s}}(f)
        \right)(x)
        =
        0
        ,\ 
        f(x)=1
    \right\},\\
    M_{-}
    &:=
    \left\{
        x\in S^{2l-1}_{s}
        \ \middle|\ 
        \left(
            \mathrm{grad}^{S^{2l-1}_{s}}(f)
        \right)(x)
        =
        0
        ,\ 
        f(x)=-1
    \right\}.
\end{align*}
In this paper, we do not focus on $M_{-}$, but $M_{+}$.
(We refer to Remark \ref{rema:M-}.)
We refer to \cite[3.5. Proposition]{MR753432}.
We have
\begin{equation*}
    M_{+}
    =
    \left\{
        x\in S^{2l-1}_{s}
        \ \middle|\ 
        \forall j\in \{1, \ldots, m\}
        ,\ 
        \langle P_{j}x, x\rangle=0
    \right\}
    .
\end{equation*}
If $M_{+}\neq \emptyset$, then $M_{+}$ is a non-degenerate submanifold of codimension $m$ in $S^{2l-1}_{s}$.
In Riemannian case, we note that $M_{+}$ is connected.
(We refer to \cite[3.6, Connectedness of the level sets of $F$]{MR3408101}.)
From now on, we assume that $M_{+}\neq \emptyset$.
We have the $m$-dimensional normal space
$
    T^{\bot}_{x}M_{+}
    =
    \left\{
        Qx
        \ \middle|\ 
        Q\in \Sigma
    \right\}
$
at $x\in M_{+}$ of index $r$, since $\{P_{i}x\}_{i=1}^{m}$ is a pseudo-orthonormal basis of $T^{\bot}_{x}M_{+}$ by \S\ref{sect:C-S}\ref{metric}.
The normal bundle $T^{\bot}M_{+}$ is trivial, that is, $T^{\bot}M_{+}\cong M_{+}\times \mathbb{R}^{m}$.
Let 
\begin{equation*}
    \bot M_{+}(\varepsilon)
    :=
    \left\{
        (x, v)
        \ \middle|\ 
        x\in M_{+},\ 
        v\in T^{\bot}_{x}M_{+},\ 
        \langle v, v\rangle=\varepsilon
    \right\}
    ,\quad 
    \varepsilon\in \{-1, 1\}
    .
\end{equation*}
The set $\bot M_{+}(\delta)$ is non-empty since $r\neq 0$ if $\delta=-1$ and $r\neq m$ if $\delta=1$ by the definition of $M_{c}$, and has a structure of a fiber bundle.
We have $\bot M_{+}(1)\cong M_{+}\times S^{m-1}_{r}$ and $\bot M_{+}(-1)\cong M_{+}\times H^{m-1}_{r-1}$.

We assume that $c\in \mathrm{W}_{\mathrm{RN}}(f)\cap (-1,\infty)$.
We take the real number $t_{c}$ which satisfies $c=\cos{(4t)}$ and $t\in (0, \pi/4)$ if $\delta=1$, and $c=\cosh{(4t)}$ and $t\in (0, \infty)$ if $\delta=-1$.
We define $\varphi_{t_{c}}\colon\bot M_{+}(\delta)\to M_{c}$ by $\varphi_{t_{c}}(x, v):=\gamma_{x, v}(t_{c})$.
Referring to \cite[3.5.(2)]{MR753432}, $\varphi_{t_{c}}$ is a diffeomorphism. 
Thus, $\bot M_{+}(\delta)\cong M_{c}$ holds.
Moreover, $M_{+}$ is the focal variety associated to $\cot{t_{c}}$ if $\delta=1$, and $\coth{t_{c}}$ if $\delta=-1$.
(We refer to \cite[2.7]{MR753432}.)

Referring to \cite[3.5.(2), Proof]{MR753432}, for any $(x, v)\in \bot M_{+}(\delta)$, we have 
\begin{equation}
    f(\varphi_{t_{c}}(x, v))
    =
    \begin{cases}
        \cos{(4t_{c})}, & (\delta=1),\\
        \cosh{(4t_{c})}, & (\delta=-1).
    \end{cases}
    \label{lemm:65}
\end{equation}
Thus, we have 
\begin{equation}
    \mathrm{W}_{\mathrm{RN}}(f)
    \cap 
    (-1,\infty)
    =
    \begin{cases}
        (-1, 1), & (r=0),\\
        (-1, \infty)\setminus\{1\} & (r\in \{1, 2, \ldots, m-1\}),\\
        (1, \infty), & (r=m).
    \end{cases}
    \label{rema:63}
\end{equation}

Referring to \cite[3.5. (2), Proof]{MR753432}, we have
\begin{equation}
    \gamma_{x, v}'(t_{c})=-\xi_{\varphi_{t_{c}}(x, v)}\in T^{\bot}_{\varphi_{t_{c}}(x, v)}M_{c}
    ,\quad 
    (x, v)\in \bot M_{+}(\delta)
    ,
    \label{eq:tvnv}
\end{equation}
which can be proven by direct computation and \eqref{eq:nv}.

Let $(x, v)\in \bot M_{+}(\delta)$.
Then there exists uniquely $Q_{v}\in \Sigma$ such that $\langle Q_{v}, Q_{v}\rangle=\delta$ and $Q_{v}x=v$.
The shape operator associated to $v$ of $M_{+}$ is also denoted by $S_{v}$.
Referring to \cite[3.6. Proof]{MR753432}, we have 
\begin{equation}
    \ker{S_{v}}
    =
    \left\{
        QQ_{v}x
        \ \middle|\ 
        Q\in \Sigma
        ,
        \langle Q_{v}, Q\rangle=0
    \right\}
    .
    \label{ker}
\end{equation}
\subsection{Isometry and Homogeneity}
\label{sect:I-H}
A connected submanifold $L\subset S^{2l-1}_{s}$ is said to be \textit{homogeneous} if a Lie subgroup of the isometry group of $S^{2l-1}_{s}$ acts on $L$ transitively.
The isometry group of $S^{2l-1}_{s}$ is $O(s, 2l-s)$.
(We refer to \cite[9 Isometries, Some Isometry Groups]{MR719023}.)

We assume that there exists a Lie subgroup $K\subset O(s, 2l-s)$ such that $K$ acts on a connected non-degenerate submanifold $L\subset S^{2l-1}_{s}$.
Then the following facts hold:
\begin{enumerate}[label=(\arabic*)]
    \item \label{taniseibun}
    Let $K_{0}$ be the identity component of $K$.
    Then $K_{0}$ also acts on $L$ transitively.
    \item \label{tamotsu}For any $k\in K$ and $x\in L$, the differential $(dk)_{x}$ preserves tangent and normal components, that is, $(dk)_{x}(T_{x}L)=T_{k(x)}L$ and $(dk)_{x}(T^{\bot}_{x}L)=T^{\bot}_{k(x)}L$ hold.
\end{enumerate}

\section{A Construction of Clifford Systems}
In this section, we give a construction of Clifford systems of signature $(m, r)$ for any $(m, r)$.

\subsection{Representations of Clifford Algebras and Families of Orthogonal Matrices}

Referring to \cite[Chapter I, \S 4]{MR1031992}, we notice the existence of isomorphisms between Clifford algebras.
By using it, we can show that there exists a family of orthogonal matrices.
\begin{proposition}
    \label{prop:8}
    For any pair $(m, r)$ of a positive integer $m$ and $r\in \{0, 1, \ldots, m\}$, there exist a positive integer $l$ and a family of orthogonal matrices $\{A_{i}\}_{i=1}^{m}\ (A_{i}\in O(l))$ such that 
    \begin{equation}
        A_{i}A_{j}+A_{j}A_{i}=2\eta_{ij}E_{l}
        ,
        \label{eq:62}
    \end{equation}
    where $(\eta_{ij}):=J_{r, m-r}$.
\end{proposition}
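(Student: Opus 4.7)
The plan is to prove the proposition by induction on $m$, using an explicit tensor-product construction that adjoins one anticommuting generator of prescribed sign at each step. The elementary building blocks are three mutually anticommuting matrices in $O(2)$, namely
\[
    K := \begin{bmatrix} 0 & -1 \\ 1 & 0 \end{bmatrix},\quad
    L := \begin{bmatrix} 0 & 1 \\ 1 & 0 \end{bmatrix},\quad
    M := \begin{bmatrix} 1 & 0 \\ 0 & -1 \end{bmatrix},
\]
satisfying $K^{2} = -E_{2}$, $L^{2} = M^{2} = E_{2}$ and $KL + LK = KM + MK = LM + ML = 0$.

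For the base case $m = 1$, the signature $(1, 0)$ is realised by $l = 1$, $A_{1} = E_{1}$, and the signature $(1, 1)$ by $l = 2$, $A_{1} = K$; in each case \eqref{eq:62} is immediate. For the inductive step, assuming $\{A_{i}\}_{i=1}^{m} \subset O(l)$ satisfies \eqref{eq:62} for signature $(m, r)$, I set $B_{i} := A_{i} \otimes L \in O(2l)$ for $i = 1, \ldots, m$ and let $B_{m+1}$ be either $E_{l} \otimes M$ or $E_{l} \otimes K$. Using the multiplicativity $(A \otimes B)(C \otimes D) = AC \otimes BD$ together with $L^{2} = E_{2}$ and the anticommutation of $L$ with $M$ and with $K$, the family $\{B_{i}\}_{i=1}^{m+1}$ becomes a Clifford system on $\mathbb{R}^{2l}$ that gains either one more positive generator (signature $(m+1, r)$) or one more negative generator (signature $(m+1, r+1)$) compared with $\{A_{i}\}_{i=1}^{m}$. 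A permutation of the indices then brings the coefficient matrix into the canonical form $J_{r', m+1-r'}$; since every $r' \in \{0, \ldots, m+1\}$ arises this way from some $r \in \{0, \ldots, m\}$, the induction closes.

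The point requiring care, rather than a genuine obstacle, is the bookkeeping for the signature after the inductive step: one must verify that after reshuffling the indices the diagonal of $\pm 1$'s really takes the canonical form $J_{r', m+1-r'}$, and not some other permutation of sign entries. This is handled by the simple observation that permuting the entries of a Clifford system produces another Clifford system whose coefficient matrix is the corresponding simultaneous row-and-column permutation, so that transpositions suffice to collect all the $-1$'s into the upper-left block. All remaining verifications reduce to one-line computations with Kronecker products.
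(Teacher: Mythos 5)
Your proof is correct, and it takes a genuinely different route from the paper's. You adjoin \emph{one} generator at a time by a uniform tensor-product step: tensor each old $A_i$ with the symmetric involution $L$ and append $E_l\otimes M$ or $E_l\otimes K$ to gain a positive or negative generator, respectively; a relabelling then puts the coefficient matrix back into canonical form $J_{r',m+1-r'}$. All the computations you cite (multiplicativity of Kronecker products, $K^2=-E_2$, $L^2=M^2=E_2$, mutual anticommutation) check out, and the base cases $(1,0)$ with $A_1=E_1$ and $(1,1)$ with $A_1=K$ are fine, so the induction by one closes. The paper instead proceeds by \emph{two} generators at a time, using four lemmas (Lemmas~\ref{lemm:12}--\ref{lemm:15}): one step $(m,r)\to(m+2,r+1)$ that doubles $l$, and two wrap-around steps $(m,0)\to(m+2,m+2)$ (quadrupling $l$) and $(m,m)\to(m+2,0)$ (doubling $l$), together with five base cases at $m=1,2$. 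This mirrors the classical mod-$2$ periodicity of indefinite Clifford algebras and, for many signatures $(m,r)$, produces a smaller order $l$ than your one-at-a-time doubling (one factor of $2$ per two generators versus one per generator); neither is optimal, as the paper notes in Remark~\ref{rema:reduce}. Since the proposition only asserts existence of \emph{some} $l$, your more economical argument is perfectly valid; it is more compact and has fewer moving parts, at the cost of a larger $l$.
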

To prove Proposition \ref{prop:8}, we will prepare some lemmas.
\begin{lemma}
    \label{lemm:12}
    Assume that there exist a positive integer $l$ and a family $\{A_{i}\}_{i=1}^{m}\ (A_{i}\in O(l))$ for $(m, r)$ satisfying \eqref{eq:62}.
    Then there exists $\{B_{j}\}_{j=1}^{m+2}\ (B_{j}\in O(2l))$ for $(m+2, r+1)$ satisfying \eqref{eq:62}.
\end{lemma}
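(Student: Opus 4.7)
My plan is to use the standard Clifford doubling construction: embed the given family into $O(2l)$ via a block pattern with a sign flip, then adjoin two new $2l \times 2l$ generators, one of each square $\pm E_{2l}$. Concretely I would set, for $i = 1, \ldots, m$,
\begin{equation*}
B_i := \begin{pmatrix} A_i & 0 \\ 0 & -A_i \end{pmatrix}, \qquad
B_{m+1} := \begin{pmatrix} 0 & E_l \\ E_l & 0 \end{pmatrix}, \qquad
B_{m+2} := \begin{pmatrix} 0 & -E_l \\ E_l & 0 \end{pmatrix}.
\end{equation*}
Orthogonality of each $B_j$ is immediate: $B_i \in O(2l)$ because $A_i \in O(l)$ and the sign flip preserves orthogonality block by block, while $B_{m+1}, B_{m+2} \in O(2l)$ by direct inspection.

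Next I would verify the anticommutation relations \eqref{eq:62} in three groups. For $1 \le i, j \le m$, the block-diagonal structure yields $B_iB_j + B_jB_i = (A_iA_j + A_jA_i) \oplus (A_iA_j + A_jA_i) = 2\eta_{ij} E_{2l}$, so the original signs are inherited on the first $m$ indices. For $1 \le i \le m$ paired with $B_{m+k}$ ($k = 1, 2$), a short block computation shows that the sign flip between the two diagonal blocks of $B_i$ cancels the two off-diagonal products, giving $B_iB_{m+k} + B_{m+k}B_i = 0$. Finally, direct multiplication gives $B_{m+1}B_{m+2} + B_{m+2}B_{m+1} = 0$, together with $B_{m+1}^2 = E_{2l}$ and $B_{m+2}^2 = -E_{2l}$.

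At this stage the family $\{B_j\}_{j=1}^{m+2}$ is anticommuting and orthogonal, and its sign pattern in order is the $r$ negatives and $m-r$ positives inherited from $\{A_i\}$, followed by one positive ($B_{m+1}$) and one negative ($B_{m+2}$). To match the signature matrix $J_{r+1, m-r+1}$ required by \eqref{eq:62} for the pair $(m+2, r+1)$, I would insert $B_{m+2}$ immediately after the first $r$ generators and place $B_{m+1}$ at the end, yielding $r+1$ anticommuting matrices of square $-E_{2l}$ followed by $m-r+1$ of square $+E_{2l}$; this relabelling preserves both the orthogonality and the anticommutation relations. I do not foresee a serious obstacle, as the proof is a routine block-matrix verification of the standard Clifford doubling; the only point requiring care is the final reordering, so that the matrix of signs is exactly $J_{r+1, m-r+1}$ and not merely a permutation of its diagonal.
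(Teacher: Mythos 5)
Your construction is correct and is essentially the paper's approach (Clifford doubling to $O(2l)$): the paper embeds the $A_i$ as $\left(\begin{smallmatrix}0&1\\1&0\end{smallmatrix}\right)\otimes A_i$ rather than $A_i\oplus(-A_i)$ (these are orthogonally conjugate), and avoids your final reordering step by inserting the new negative-square generator $\left(\begin{smallmatrix}0&1\\-1&0\end{smallmatrix}\right)\otimes E_l$ directly in position $r+1$; otherwise the verification of \eqref{eq:62} is the same block computation, and your observation that relabelling preserves the relations is correct.
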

\begin{proof}
    Let
    \begin{equation*}
        B_{j}
        :=
        \begin{cases}
            \begin{pmatrix}
                0&1\\
                1&0
            \end{pmatrix}
            \otimes A_{j},
            &(j\in \{1, 2, \ldots, r\}),\vspace{1truemm}\\
            \begin{pmatrix}
                0&1\\
                -1&0
            \end{pmatrix}
            \otimes E_{l},
            &(j=r+1),\vspace{1truemm}\\
            \begin{pmatrix}
                0&1\\
                1&0
            \end{pmatrix}
            \otimes A_{j-1},
            &(j\in \{r+2, r+3, \ldots, m+1\}),\vspace{1truemm}\\
            \begin{pmatrix}
                1&0\\
                0&-1
            \end{pmatrix}
            \otimes E_{l},
            &(j=m+2).
        \end{cases}
    \end{equation*}
    We can verify that $\{B_{j}\}$ satisfies \eqref{eq:62} by direct computation.
\end{proof}
\begin{lemma}
    \label{lemm:13}
    Assume that there exist a positive integer $l$ and a family $\{A_{i}\}_{i=1}^{m}\ (A_{i}\in O(l))$ for $(m, 0)$ satisfying \eqref{eq:62}.
    Then there exists $\{B_{j}\}_{j=1}^{m+2}\ (B_{j}\in O(4l))$ for $(m+2, m+2)$ satisfying \eqref{eq:62}.
\end{lemma}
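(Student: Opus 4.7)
The plan is to mimic the doubling trick of Lemma~\ref{lemm:12}, but tuned so that every newly produced $B_j$ squares to $-E_{4l}$ rather than $+E_{4l}$, since $r=m+2$ forces $\eta_{jj}=-1$ for every $j$. This suggests working in the triple tensor factorization $\mathbb{R}^{4l}=\mathbb{R}^{2}\otimes\mathbb{R}^{2}\otimes\mathbb{R}^{l}$ and using a single designated skew block that squares to $-E_{2}$ as the sign-source in each $B_j$.

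Concretely, I would fix the three standard anticommuting $2\times 2$ matrices
\[
    \sigma_{1}:=\begin{pmatrix}0 & 1\\ 1 & 0\end{pmatrix},\qquad
    \sigma_{2}:=\begin{pmatrix}0 & 1\\ -1 & 0\end{pmatrix},\qquad
    \sigma_{3}:=\begin{pmatrix}1 & 0\\ 0 & -1\end{pmatrix},
\]
noting that all three lie in $O(2)$, that $\sigma_{1}^{2}=\sigma_{3}^{2}=E_{2}$ and $\sigma_{2}^{2}=-E_{2}$, and that $\sigma_{i}\sigma_{j}+\sigma_{j}\sigma_{i}=0$ for $i\neq j$. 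Then I would set
\begin{align*}
    B_{j} &:= \sigma_{2}\otimes\sigma_{1}\otimes A_{j} \qquad (j\in\{1,\ldots,m\}),\\
    B_{m+1} &:= \sigma_{2}\otimes\sigma_{3}\otimes E_{l},\\
    B_{m+2} &:= E_{2}\otimes\sigma_{2}\otimes E_{l}.
\end{align*}

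From here the verification is routine via $(X\otimes Y\otimes Z)(X'\otimes Y'\otimes Z')=XX'\otimes YY'\otimes ZZ'$. Each $B_{j}$ is a Kronecker product of orthogonal matrices and hence lies in $O(4l)$. The squares collapse to $-E_{4l}$ because in each $B_{j}$ exactly one tensor factor is $\sigma_{2}$ while the remaining factors square to identity (using $A_{j}^{2}=E_{l}$, which follows from the hypothesis $r=0$). Each pair $B_{j},B_{k}$ with $j\neq k$ anticommutes because the formulas arrange exactly one tensor slot on which the two entries anticommute: the third slot for indices both in $\{1,\ldots,m\}$, and the second slot for any pair involving $B_{m+1}$ or $B_{m+2}$.

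The only genuine design constraint, and hence the only part requiring forethought, is engineering exactly one anticommuting slot per pair rather than two (which would reinstate commutation) or zero; the asymmetric placement of $\sigma_{2}$ in $B_{m+2}$ (first slot identity, second slot $\sigma_{2}$) is what handles the pair $(B_{m+1},B_{m+2})$ correctly while remaining compatible with the pairs $(B_{j},B_{m+2})$ for $j\le m$. No deeper obstacle is expected beyond this combinatorial bookkeeping.
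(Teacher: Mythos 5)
Your construction is correct and is, up to a global sign on some factors and a swap of $B_{m+1}$ with $B_{m+2}$, exactly the paper's: the three explicit $4\times 4$ blocks the paper writes down factor as $(-\sigma_{2})\otimes\sigma_{1}$, $E_{2}\otimes(-\sigma_{2})$ and $(-\sigma_{2})\otimes\sigma_{3}$ in your notation. Presenting it via the triple Kronecker product $\mathbb{R}^{2}\otimes\mathbb{R}^{2}\otimes\mathbb{R}^{l}$ makes the ``verify by direct computation'' step transparent, but the underlying construction is the same.
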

\begin{proof}
    Let
    \begin{equation*}
        B_{j}
        :=
        \begin{cases}
            \begin{pmatrix}
                0&0&0&-1\\
                0&0&-1&0\\
                0&1&0&0\\
                1&0&0&0
            \end{pmatrix}
            \otimes A_{j},
            &(j\in \{1, 2, \ldots, m\}),\vspace{1truemm}\\
            \begin{pmatrix}
                0&-1&0&0\\
                1&0&0&0\\
                0&0&0&-1\\
                0&0&1&0
            \end{pmatrix}
            \otimes E_{l},
            &(j=m+1),\vspace{1truemm}\\
            \begin{pmatrix}
                0&0&-1&0\\
                0&0&0&1\\
                1&0&0&0\\
                0&-1&0&0
            \end{pmatrix}
            \otimes E_{l},
            &(j=m+2).
        \end{cases}
    \end{equation*}
    We can verify that $\{B_{j}\}$ satisfies \eqref{eq:62} by direct computation.
\end{proof}
\begin{lemma}
    \label{lemm:14}
    Assume that there exist a positive integer $l$ and a family $\{A_{i}\}_{i=1}^{m}\ (A_{i}\in O(l))$ for $(m, m)$ satisfying \eqref{eq:62}.
    Then there exists $\{B_{j}\}_{j=1}^{m+2}\ (B_{j}\in O(2l))$ for $(m+2, 0)$ satisfying \eqref{eq:62}.
\end{lemma}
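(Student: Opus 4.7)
The plan is to mimic the Kronecker-product construction of Lemmas \ref{lemm:12} and \ref{lemm:13}, only now starting from the opposite parity extreme. The hypothesis is that $\{A_{i}\}_{i=1}^{m}$ is a family for $(m,m)$, so $(\eta_{ij})=J_{m,0}$ gives $\eta_{ij}=-\delta_{ij}$; in particular each $A_{i}\in O(l)$ satisfies $A_{i}^{2}=-E_{l}$ together with $A_{i}A_{j}+A_{j}A_{i}=0$ for $i\neq j$, and hence ${}^{t}\!A_{i}=-A_{i}$. The target family $\{B_{j}\}_{j=1}^{m+2}$ must instead consist of symmetric orthogonal involutions in $O(2l)$ (since $(m+2,0)$ forces $B_{j}^{2}=E_{2l}$) which pairwise anticommute.

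The key algebraic trick is a sign cancellation in the Kronecker square. Let
\[
    \sigma_{1}:=\begin{pmatrix}0&1\\1&0\end{pmatrix},\qquad
    \sigma_{2}:=\begin{pmatrix}0&1\\-1&0\end{pmatrix},\qquad
    \sigma_{3}:=\begin{pmatrix}1&0\\0&-1\end{pmatrix},
\]
which all lie in $O(2)$, satisfy $\sigma_{1}^{2}=\sigma_{3}^{2}=E_{2}$ and $\sigma_{2}^{2}=-E_{2}$, and pairwise anticommute. The natural ansatz is then
\[
    B_{j}:=\sigma_{2}\otimes A_{j}\ (j=1,\ldots,m),\qquad
    B_{m+1}:=\sigma_{1}\otimes E_{l},\qquad
    B_{m+2}:=\sigma_{3}\otimes E_{l}.
\]
Each factor is orthogonal, so $B_{j}\in O(2l)$ for every $j$ by the Kronecker product. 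For $j\le m$ the decisive computation is $B_{j}^{2}=(\sigma_{2}^{2})\otimes(A_{j}^{2})=(-E_{2})\otimes(-E_{l})=E_{2l}$, so the two minus signs indeed cancel.

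The remaining verifications all reduce to one application of the mixed-product identity $(X_{1}\otimes Y_{1})(X_{2}\otimes Y_{2})=(X_{1}X_{2})\otimes(Y_{1}Y_{2})$. For $j,k\le m$ with $j\neq k$, $B_{j}B_{k}+B_{k}B_{j}=\sigma_{2}^{2}\otimes(A_{j}A_{k}+A_{k}A_{j})=0$. For $j\le m$, $B_{j}B_{m+1}+B_{m+1}B_{j}=(\sigma_{2}\sigma_{1}+\sigma_{1}\sigma_{2})\otimes A_{j}=0$, and the same argument with $\sigma_{3}$ in place of $\sigma_{1}$ handles $B_{m+2}$. Finally $B_{m+1}^{2}=B_{m+2}^{2}=E_{2l}$ and $B_{m+1}B_{m+2}+B_{m+2}B_{m+1}=(\sigma_{1}\sigma_{3}+\sigma_{3}\sigma_{1})\otimes E_{l}=0$.

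The only real obstacle is locating the correct ansatz, but the parallel with Lemmas \ref{lemm:12} and \ref{lemm:13} essentially dictates it: one must arrange for a product of two matrices that each square to $-I$ in order to produce one squaring to $+I$, and then round out the family with two ``decoupled'' involutions of the form $\sigma_{1}\otimes E_{l}$ and $\sigma_{3}\otimes E_{l}$. Once the ansatz is in hand, the verification of \eqref{eq:62} is a routine direct computation, exactly in the spirit of the two preceding lemmas.
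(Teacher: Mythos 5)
Your proof is correct and essentially identical to the paper's: the paper also takes $B_j = \sigma_2 \otimes A_j$ for $j \le m$ together with $\sigma_3 \otimes E_l$ and $\sigma_1 \otimes E_l$ for the two extra indices (just in the opposite order, which is immaterial since $\eta_{m+1,m+1}=\eta_{m+2,m+2}=+1$); the paper simply states the ansatz and leaves the verification as a direct computation, which you have spelled out.
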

\begin{proof}
    Let
    \begin{equation*}
        B_{j}
        :=
        \begin{cases}
            \begin{pmatrix}
                0&1\\
                -1&0
            \end{pmatrix}
            \otimes A_{j},
            &(j\in \{1, 2, \ldots, m\}),\vspace{1truemm}\\
            \begin{pmatrix}
                1&0\\
                0&-1
            \end{pmatrix}
            \otimes E_{l},
            &(j=m+1),\vspace{1truemm}\\
            \begin{pmatrix}
                0&1\\
                1&0
            \end{pmatrix}
            \otimes E_{l},
            &(j=m+2).
        \end{cases}
    \end{equation*}
    We can verify that $\{B_{j}\}$ satisfies \eqref{eq:62} by direct computation.
\end{proof}
\begin{lemma}
    \label{lemm:15}
    For $(m, r)=(1, 0), (1, 1), (2, 0), (2, 1)$ and $(2, 2)$, there exist a positive integer $l$ and a family of $\{A^{(m, r)}_{i}\}_{i=1}^{m}\ (A^{(m, r)}_{i}\in O(l))$ for $(m, r)$ satisfying \eqref{eq:62}.
\end{lemma}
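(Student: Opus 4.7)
The plan is to treat this lemma as a finite list of five concrete base cases; for each signature $(m,r)$ I will exhibit an explicit family $\{A_{i}^{(m,r)}\}_{i=1}^{m}$ in a specific low dimension $l$ and check the relation \eqref{eq:62} by direct matrix multiplication. No structural input is needed---the whole content of the lemma is five explicit constructions, the building blocks used by the inductive Lemmas \ref{lemm:12}--\ref{lemm:14} in Proposition \ref{prop:8}.

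First I would dispatch the one-matrix cases. A single $A_{1}\in O(l)$ with $A_{1}^{2}=\pm E_{l}$ must, by ${}^{t}\!A_{1}A_{1}=E_{l}$, be symmetric when $A_{1}^{2}=E_{l}$ and skew-symmetric when $A_{1}^{2}=-E_{l}$. Accordingly I take $l=1$ with $A_{1}^{(1,0)}=E_{1}$ for the signature $(1,0)$, and $l=2$ with $A_{1}^{(1,1)}$ equal to the standard $\pi/2$ rotation matrix for $(1,1)$; both checks are immediate.

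Next I would handle $(2,0)$ and $(2,1)$, both realisable already in dimension $l=2$. For $(2,0)$ I use the anticommuting pair of symmetric involutions $A_{1}^{(2,0)}=\mathrm{diag}(1,-1)$ and $A_{2}^{(2,0)}$ the off-diagonal swap with $1$'s off the diagonal. For $(2,1)$ I take $A_{1}^{(2,1)}$ to be the $\pi/2$ rotation used above and $A_{2}^{(2,1)}=\mathrm{diag}(1,-1)$. In each case the orthogonality, the squares, and the anticommutation all follow from a single $2\times 2$ multiplication.

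The only case that forces a jump in dimension is $(2,2)$: any $Q\in O(2)$ with $Q^{2}=-E_{2}$ is $\pm$ the standard $\pi/2$ rotation (its eigenvalues are forced to be $\pm i$, excluding reflections), and two such matrices necessarily commute, so $l=2$ cannot work. I therefore pass to $l=4$ and let $A_{1}^{(2,2)}$ and $A_{2}^{(2,2)}$ be the matrices of left multiplication by the quaternion units $\mathbf{i}$ and $\mathbf{j}$ acting on $\mathbb{H}\cong\mathbb{R}^{4}$. Orthogonality, $A_{j}^{2}=-E_{4}$, and the anticommutation $A_{1}^{(2,2)}A_{2}^{(2,2)}+A_{2}^{(2,2)}A_{1}^{(2,2)}=0$ then follow from the quaternion identities $\mathbf{i}^{2}=\mathbf{j}^{2}=-1$ and $\mathbf{ij}+\mathbf{ji}=0$. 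There is no genuine obstacle; the only substantive point is noticing that $l=2$ is impossible for $(2,2)$, which is exactly what dictates passing to dimension $4$.
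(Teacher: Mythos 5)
Your proposal is correct and takes essentially the same route as the paper's proof: both simply exhibit explicit orthogonal matrices for each of the five base-case signatures and verify \eqref{eq:62} by direct multiplication, and your choice of matrices coincides with the paper's (for $(2,2)$, the paper's $4\times 4$ matrices are exactly the left-multiplication-by-$\mathbf{i}$ and $\mathbf{j}$ matrices you describe). The brief observation that $(2,2)$ cannot be realized with $l=2$ is a nice extra but not required for the lemma.
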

\begin{proof}
    For $(m, r)=(1, 0)$, let $A^{(1, 0)}_{1}:=\begin{pmatrix}1\end{pmatrix}$.
    For $(m, r)=(1, 1)$, let
    \begin{equation*}
        A^{(1, 1)}_{1}:=
        \begin{pmatrix}
            0 & -1\\
            1 & 0
        \end{pmatrix}
        .
    \end{equation*}
    For $(m, r)=(2, 0)$, let
    \begin{equation*}
        A^{(2, 0)}_{1}
        :=
        \begin{pmatrix}
            1 & 0\\
            0 & -1
        \end{pmatrix}
        ,\quad
        A^{(2, 0)}_{2}
        :=
        \begin{pmatrix}
            0 & -1\\
            -1 & 0
        \end{pmatrix}
        .
    \end{equation*}
    For $(m, r)=(2, 1)$, let
    \begin{equation*}
        A^{(2, 1)}_{1}
        :=
        \begin{pmatrix}
            0 & 1\\
            -1 & 0
        \end{pmatrix}
        ,\quad
        A^{(2, 1)}_{2}
        :=
        \begin{pmatrix}
            1 & 0\\
            0 & -1
        \end{pmatrix}
        .
    \end{equation*}
    For $(m, r)=(2, 2)$, let
    \begin{equation*}
        A^{(2, 2)}_{1}
        :=
        \begin{pmatrix}
            0&-1&0&0\\
            1&0&0&0\\
            0&0&0&-1\\
            0&0&1&0
        \end{pmatrix}
        ,\quad
        A^{(2, 2)}_{2}
        :=
        \begin{pmatrix}
            0&0&-1&0\\
            0&0&0&1\\
            1&0&0&0\\
            0&-1&0&0
        \end{pmatrix}
        .
    \end{equation*}
    We can verify that $\{A^{(m, r)}_{i}\}$ satisfies \eqref{eq:62} by direct computation.
\end{proof}
\begin{proof}
    (The Proof of Proposition \ref{prop:8})
    We prove Proposition \ref{prop:8} by induction on $m$.
    For $m=1$ and $m=2$, we have such families of orthogonal matrices satisfying \eqref{eq:62} by Lemma \ref{lemm:15}.
    Next, we assume that there are families of orthogonal matrices satisfying \eqref{eq:62} for $m=2b-1$ and $m=2b$.
    By Lemma \ref{lemm:13} and using the families of orthogonal matrices for $(m, r)=(2b-1, 0), (2b, 0)$, we have families of orthogonal matrices for $(m, r)=(2b+1, 2b+1), (2b+2, 2b+2)$.
    By Lemma \ref{lemm:14} and using the families of orthogonal matrices for $(m, r)=(2b-1, 2b-1), (2b, 2b)$, we have families of orthogonal matrices for $(m, r)=(2b+1, 0), (2b+2, 0)$.
    By Lemma \ref{lemm:12} and using the families of orthogonal matrices for $(m, r)=(2b-1, 0), (2b-1, 1), \ldots, (2b-1, 2b-1), (2b, 0), (2b, 1), \ldots, (2b, 2b)$, we have families of orthogonal matrices for $(m, r)=(2b+1, 1), (2b+1, 2), \ldots, (2b+1, 2b), (2b+2, 1), (2b+2, 2), \ldots, (2b+2, 2b+1)$.
    Thus there are families of orthogonal matrices satisfying \eqref{eq:62} for $m=2b+1$ and $m=2b+2$.
    This completes the proof by induction on $m$.
\end{proof}
\begin{remark}
    \label{rema:reduce}
    We can reduce the order $l$ in Proposition \ref{prop:8}.
    Referring to \cite{MR2251632}, we can construct a family of orthogonal matrices of the following order $l(m)$ for $(m, m)\ (m=1, 2, \ldots, 8, 9, \ldots)$ satisfying \eqref{eq:62}:
    \begin{equation*}
        \begin{array}{|c||c|c|c|c|c|c|c|c|c|c|}\hline
            m&0&1&2&3&4&5&6&7&\cdots&i\\\hline
            l(m)&1&2&4&4&8&8&8&8&\cdots&16\cdot l(i-8)\\\hline
        \end{array}
    \end{equation*}
    These are used to construct Clifford systems in Riemannian case.
    (We refer to \cite[3.5]{MR624227} or \cite[3.9, p.163]{MR3408101}.)
    We can construct a family of orthogonal matrices for $(m, 0)$ satisfying \eqref{eq:62} by Lemma \ref{lemm:14} and for $(m, r)\ (r\in \{1, 2, \ldots, m-1\})$ satisfying \eqref{eq:62} by Lemma \ref{lemm:12}.
    In this way, the orders of these families of orthogonal matrices are smaller than the orders of ones in Proposition \ref{prop:8}.
\end{remark}

\subsection{Clifford Systems}
We construct Clifford systems of signature $(m, r)$ for any $(m, r)$ explicitly.
\begin{lemma}
    \label{lemm:16}
    For $\{A_{i}\}_{i=1}^{m}\ (A_{i}\in O(l))$ satisfying \eqref{eq:62}, $A_{i}$ is skew-symmetric if $i\in \{1, 2, \ldots r\}$, or $A_{i}$ is symmetric if $i\in \{r+1, r+2, \ldots, m\}$.
\end{lemma}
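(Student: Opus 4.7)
The plan is to exploit the two defining properties of the $A_i$ simultaneously: the orthogonality condition $A_i \in O(l)$ and the Clifford-type anticommutation relation \eqref{eq:62} specialized to the diagonal. Each of these gives an expression for the inverse of $A_i$, and forcing them to agree immediately pins down the symmetry type.

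First I would unpack $A_i \in O(l) = O(0, l)$ as ${}^t\!A_i A_i = E_l$, which is equivalent to ${}^t\!A_i = A_i^{-1}$. Next I would set $i = j$ in \eqref{eq:62} to obtain $2 A_i^2 = 2 \eta_{ii} E_l$, hence $A_i^2 = \eta_{ii} E_l$. Since $\eta_{ii} = -1$ for $i \in \{1, \ldots, r\}$ and $\eta_{ii} = +1$ for $i \in \{r+1, \ldots, m\}$, this yields $A_i^{-1} = -A_i$ in the first range and $A_i^{-1} = A_i$ in the second.

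Combining the two expressions for $A_i^{-1}$ then gives ${}^t\!A_i = -A_i$ when $i \leq r$ and ${}^t\!A_i = A_i$ when $i > r$, which is precisely the statement of the lemma. There is no real obstacle here: the argument is a two-line manipulation once both hypotheses are written in terms of $A_i^{-1}$, and no case analysis beyond the sign of $\eta_{ii}$ is required.
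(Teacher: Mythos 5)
Your proof is correct and is essentially the same argument the paper gives: set $i=j$ in \eqref{eq:62} to get $A_i^2=\eta_{ii}E_l$, then multiply by ${}^t\!A_i=A_i^{-1}$ and read off the symmetry type from the sign of $\eta_{ii}$. No differences worth noting.
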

\begin{proof}
    If $i\in \{1, 2, \ldots, r\}$, we have $(A_{i})^{2}=-E_{l}$.
    We multiply the both side by ${}^{t}\!A_{i}=(A_{i})^{-1}$.
    Then we have ${}^{t}\!A_{i}=-A_{i}$.
    Similarly, if $i\in \{r+1, r+2, \ldots, m\}$, we have ${}^{t}\!A_{i}=A_{i}$.
\end{proof}
Proposition \ref{prop:9} follows from Lemma \ref{lemm:16}.
\begin{proposition}
    \label{prop:9}
    Let $m$ be an integer lager than $1$ and $r\in \{0, 1, 2, \ldots, m\}$.
    For $\{A_{i}\}_{i=1}^{m}\ (A_{i}\in O(l))$ satisfying \eqref{eq:62} and for a positive integer $d$, let $\{P_{i}\}_{i=1}^{m}$ be the matrices
    \begin{equation*}
        P_{i}
        :=
        \begin{cases}
            \begin{bmatrix}
                &&A_{i}\\
                &\iddots&\\
                A_{i}&&
            \end{bmatrix}
            ,&(i\in \{1, 2, \ldots, r\}),\vspace{1truemm}\\
            \begin{bmatrix}
                A_{i}&&\\
                &\ddots&\\
                &&A_{i}
            \end{bmatrix}
            ,&(i\in \{r+1, r+2, \ldots, m\}),\\
        \end{cases}
    \end{equation*}
    where the number of $A_{i}$ in $P_{i}$ is $2d$.
    Then $\{P_{i}\}_{i=1}^{m}$ is a Clifford system of signature $(m, r)$ on $\mathbb{R}^{2dl}_{dl}$.
\end{proposition}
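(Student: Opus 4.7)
The plan is to verify the two defining properties of a Clifford system of signature $(m,r)$ on $\mathbb{R}^{2dl}_{dl}$ directly: each $P_i$ lies in $\mathrm{Sym}(\mathbb{R}^{2dl}_{dl})$, and $P_iP_j+P_jP_i = 2\eta_{ij}E_{2dl}$ for all $i,j$. To keep the computation compact I would first rewrite each $P_i$ as a Kronecker product. Let $K_{2d}$ denote the $2d\times 2d$ reversal matrix, namely the matrix with $1$'s on the anti-diagonal and $0$'s elsewhere. Then the definition reads $P_i = K_{2d}\otimes A_i$ for $i\in\{1,\ldots,r\}$ and $P_i = E_{2d}\otimes A_i$ for $i\in\{r+1,\ldots,m\}$. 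The elementary identities I would isolate before anything else are $K_{2d}^{2}=E_{2d}$, ${}^{t}\!K_{2d}=K_{2d}$, $J_{dl,dl}=J_{d,d}\otimes E_l$, and the anticommutation $K_{2d}J_{d,d}+J_{d,d}K_{2d}=0$, the last of which follows from a short entry-by-entry inspection.

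With these in hand, the symmetry condition ${}^{t}\!P_i=J_{dl,dl}P_iJ_{dl,dl}$ splits into two ranges of $i$ according to Lemma \ref{lemm:16}. For $i>r$, where $A_i$ is symmetric, both sides reduce to $P_i$ because $E_{2d}$ commutes with $J_{d,d}$. For $i\leqq r$, where $A_i$ is skew-symmetric, the transpose gives ${}^{t}\!P_i=-P_i$, while the Kronecker identity $K_{2d}J_{d,d}+J_{d,d}K_{2d}=0$ yields $J_{dl,dl}P_iJ_{dl,dl}=-P_i$ as well, so once more $P_i\in\mathrm{Sym}(\mathbb{R}^{2dl}_{dl})$.

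For the Clifford relation I would handle three cases in parallel. If $i,j>r$, then $P_iP_j+P_jP_i = E_{2d}\otimes(A_iA_j+A_jA_i) = 2\delta_{ij}E_{2dl}$ by \eqref{eq:62}. If $i,j\leqq r$, the same factorization succeeds because $K_{2d}^{2}=E_{2d}$, giving $K_{2d}^{2}\otimes(A_iA_j+A_jA_i) = -2\delta_{ij}E_{2dl}$. If one of the two indices is $\leqq r$ and the other is $>r$, the mixed product yields $K_{2d}\otimes(A_iA_j+A_jA_i)$, which vanishes since such $i\neq j$ have $\eta_{ij}=0$ and \eqref{eq:62} then forces $A_iA_j+A_jA_i=0$. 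In all three cases the right-hand side equals $2\eta_{ij}E_{2dl}$, where $(\eta_{ij})=J_{r,m-r}$.

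No substantive obstacle is expected: the proof is bookkeeping in Kronecker products, leveraging Lemma \ref{lemm:16} and the relation \eqref{eq:62} inherited from the family $\{A_i\}$. The only identity that does not reduce to a single-line tensor manipulation is the anticommutation $K_{2d}J_{d,d}=-J_{d,d}K_{2d}$; once that is recorded, the rest of the argument is purely mechanical.
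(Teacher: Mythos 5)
Your proposal is correct and takes essentially the same approach the paper intends: the paper gives no proof beyond the one-line remark that Proposition \ref{prop:9} follows from Lemma \ref{lemm:16}, and your Kronecker-product bookkeeping (with $P_i=K_{2d}\otimes A_i$ or $E_{2d}\otimes A_i$, the identities $K_{2d}^2=E_{2d}$, $J_{dl,dl}=J_{d,d}\otimes E_l$, and $K_{2d}J_{d,d}=-J_{d,d}K_{2d}$, and Lemma \ref{lemm:16} for the symmetry check) is exactly the elaboration that is being left to the reader.
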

By Propositions \ref{prop:8} and \ref{prop:9}, we obtain concrete Clifford systems of signature $(m, r)$ for any $(m, r)$.
\begin{remark}
    By Proposition \ref{prop:9}, we make Clifford systems of signature $(m, r)$ on $\mathbb{R}^{2dl}_{dl}$, which has the neutral metric.
    However, Proposition \ref{prop:9} is meaningful by \S\ref{sect:C-S}\ref{index-equiv}.
\end{remark}
\begin{theorem}
    \label{theo:C-exist}
    For any pair $(m, r)$ of an integer $m$ larger than $1$ and $r\in \{0, 1, 2, \ldots, m\}$, there exist a positive integer $l$ and a Clifford system $\{P_{i}\}_{i=1}^{m}\ (P_{i}\in \mathrm{Sym}(\mathbb{R}^{2l}_{l}))$ of signature $(m, r)$.
\end{theorem}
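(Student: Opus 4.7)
The plan is to observe that Theorem \ref{theo:C-exist} is essentially an immediate corollary of the two preceding propositions, so the proof reduces to stringing them together with the correct parameters. First I would invoke Proposition \ref{prop:8} for the given pair $(m, r)$ to produce a positive integer (call it $l_{0}$) together with a family $\{A_{i}\}_{i=1}^{m} \subset O(l_{0})$ satisfying the Clifford-type anticommutation relation \eqref{eq:62} with $(\eta_{ij}) = J_{r, m-r}$.

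Next I would feed this family into Proposition \ref{prop:9} with the choice $d = 1$. The output is a family $\{P_{i}\}_{i=1}^{m}$ of matrices which, by the conclusion of Proposition \ref{prop:9}, forms a Clifford system of signature $(m, r)$ on $\mathbb{R}^{2 d l_{0}}_{d l_{0}} = \mathbb{R}^{2 l_{0}}_{l_{0}}$. Setting $l := l_{0}$, this is exactly the Clifford system of signature $(m, r)$ on $\mathbb{R}^{2l}_{l}$ asserted by the theorem, so the proof is complete.

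There is essentially no obstacle: all the work has been front-loaded into Propositions \ref{prop:8} and \ref{prop:9}. The only thing worth checking carefully is the bookkeeping of indices — namely, that when $d = 1$ the ambient space in Proposition \ref{prop:9} indeed specializes to $\mathbb{R}^{2l}_{l}$ (the neutral signature target in the theorem) and that the symmetry pattern $P_{i} \in \mathrm{Sym}(\mathbb{R}^{2l}_{l})$ is consistent with the block structure of $P_{i}$ from Proposition \ref{prop:9} combined with the skew/symmetric dichotomy of Lemma \ref{lemm:16}. Once that match is made, nothing further is needed.
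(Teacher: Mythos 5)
Your proof is correct and takes essentially the same approach as the paper: the paper presents Theorem \ref{theo:C-exist} as an immediate consequence of Propositions \ref{prop:8} and \ref{prop:9} (noting this explicitly in the sentence preceding the theorem) without writing out a separate proof. Your bookkeeping with $d=1$ giving $\mathbb{R}^{2l_0}_{l_0}$ is exactly the intended specialization.
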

\begin{remark}
    In Riemannian case, it is easy that we find the order $l$ of a Clifford system which has $m$ matrices since representations of Clifford algebras have periodicity of its degree.
    (We refer to \cite[3.9, p.163]{MR3408101} and Remark \ref{rema:reduce}.)
    On the other hand, in pseudo-Riemannian case, we can find the order $l$ in Proposition \ref{prop:9} by Lemmas \ref{lemm:12}, \ref{lemm:13}, \ref{lemm:14} and \ref{lemm:15}.
\end{remark}

\section{Inhomogeneity of Isoparametric Hypersurfaces of OT-FKM-type}
In this section, we investigate inhomogeneity of isoparametric hypersurfaces of OT-FKM-type.

\subsection{Orthogonal Decompositions}
\label{sect:4-1}
Let $\{P_{i}\}_{i=1}^{m}$ be a Clifford system of signature $(m, r)$ satisfying $P_{i}P_{j}+P_{j}P_{i}=2\eta_{ij}I$ for $i, j\in \{1, \ldots, m\}$.
Hereafter, we assume that $m\equiv 0\pmod{4}$ and $r\equiv 0\pmod{2}$.
Let $p\in \{4, 8, \ldots m\}$ and $q\in \{1, 3, \ldots, m-3\}$ where $q+p-1\leqq m$.
Let $Q_{q, p}:=P_{q}P_{q+1}\cdots P_{q+p-1}$.
\begin{lemma}
    \label{lemm:17-1} 
    We have $Q_{q, p}=P_{q+p-1}P_{q+p-2}\cdots P_{q}$.
\end{lemma}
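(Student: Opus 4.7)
The plan is a bookkeeping argument: reverse the product $P_q P_{q+1} \cdots P_{q+p-1}$ one transposition at a time, counting sign changes, and then check that the total sign is $+1$ under the hypothesis $p \equiv 0 \pmod 4$.

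First I would record the basic fact driving everything: since $\eta_{ij} = 0$ for $i \neq j$, the Clifford relation $P_i P_j + P_j P_i = 2\eta_{ij} I$ degenerates to the anticommutation law $P_i P_j = -P_j P_i$ whenever $i \neq j$. The indices $q, q+1, \ldots, q+p-1$ are pairwise distinct (this is where we use $q + p - 1 \leq m$), so any two distinct factors in $Q_{q,p}$ anticommute.

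Next I would move the matrices one by one from their original positions to their reversed positions and total the sign. The cleanest bookkeeping is: to bring $P_{q+p-1}$ from the rightmost slot to the leftmost slot, one performs $p-1$ adjacent transpositions with distinct neighbors, yielding a factor $(-1)^{p-1}$; then bringing the next factor ($P_{q+p-2}$) to position two past the remaining $p-2$ distinct matrices contributes $(-1)^{p-2}$; and so on. In total,
\begin{equation*}
    P_{q+p-1} P_{q+p-2} \cdots P_{q}
    = (-1)^{(p-1) + (p-2) + \cdots + 1}\, P_{q} P_{q+1} \cdots P_{q+p-1}
    = (-1)^{p(p-1)/2}\, Q_{q,p}.
\end{equation*}
(An equivalent presentation would be a straightforward induction on $p$, peeling off the first and last factor at each step.)

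Finally I would close by checking the parity. Writing $p = 4k$ with $k \geq 1$, we have $p(p-1)/2 = 2k(4k-1)$, which is even, so $(-1)^{p(p-1)/2} = 1$ and the two products agree. There is no real obstacle here; the hypothesis $p \equiv 0 \pmod 4$ is exactly what is needed for the reversal sign to vanish, and the hypotheses on $q$ and $r$ play no role at this stage—they will become relevant only in the subsequent lemmas that build on $Q_{q,p}$.
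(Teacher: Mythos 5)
Your proof is correct and takes essentially the same route as the paper: anticommute distinct $P_i$'s to reverse the product, collect a sign $(-1)^{p(p-1)/2}$, and use $p\equiv 0\pmod 4$ to show that sign is $+1$. The paper simply cites its property \S\ref{sect:C-S}\ref{change-PQ} for the anticommutation step rather than re-deriving it from the defining relation as you do, but the content and the bookkeeping are identical.
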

\begin{proof}
    By \S \ref{sect:C-S}\ref{change-PQ}, we have 
    $Q_{q, p}=(-1)^{{p(p-1)}/{2}}P_{q+p-1}P_{q+p-2}\cdots P_{q}$.
    Since $p\equiv 0\pmod{4}$, we have $(-1)^{{p(p-1)}/{2}}=1$.
\end{proof}
\begin{lemma}
    \label{lemm:17-2} 
    We have $Q_{q, p}\in \mathrm{Sym}(\mathbb{R}^{2l}_{s})$.
\end{lemma}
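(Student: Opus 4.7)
The plan is to use Lemma \ref{lemm:17-1} together with the fact that each $P_{i}$ is symmetric with respect to the pseudo-inner product. By definition, $F\in \mathrm{Sym}(\mathbb{R}^{2l}_{s})$ means $\langle FX,Y\rangle=\langle X,FY\rangle$ for all $X,Y$. For a product of two operators $A,B\in\mathrm{Sym}(\mathbb{R}^{2l}_{s})$, we have $\langle ABX,Y\rangle=\langle BX,AY\rangle=\langle X,BAY\rangle$, so the adjoint of $AB$ is $BA$. Iterating, the adjoint of $P_{q}P_{q+1}\cdots P_{q+p-1}$ is $P_{q+p-1}P_{q+p-2}\cdots P_{q}$.

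So the forward step is: compute $\langle Q_{q,p}X,Y\rangle$ by moving the factors $P_{q},P_{q+1},\ldots,P_{q+p-1}$ across the bracket one at a time, using that each $P_{i}\in\mathrm{Sym}(\mathbb{R}^{2l}_{s})$. This yields
\begin{equation*}
    \langle Q_{q,p}X,Y\rangle=\langle X,P_{q+p-1}P_{q+p-2}\cdots P_{q}Y\rangle.
\end{equation*}
Then I would invoke Lemma \ref{lemm:17-1}, which gives precisely $P_{q+p-1}P_{q+p-2}\cdots P_{q}=Q_{q,p}$, to conclude $\langle Q_{q,p}X,Y\rangle=\langle X,Q_{q,p}Y\rangle$, i.e.\ $Q_{q,p}\in\mathrm{Sym}(\mathbb{R}^{2l}_{s})$.

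There is no real obstacle here: the assumption $p\equiv 0\pmod 4$ is not used directly in this lemma but only indirectly, since it is exactly what made Lemma \ref{lemm:17-1} (equality with the reversed product, i.e.\ sign $(-1)^{p(p-1)/2}=+1$) work. Once Lemma \ref{lemm:17-1} is in hand, the present lemma is just the remark that a product of pairwise self-adjoint operators is self-adjoint precisely when it equals its reverse. The proof is therefore a one-line reduction to Lemma \ref{lemm:17-1}.
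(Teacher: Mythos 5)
Your proof is correct and matches the paper's argument: move each $P_i$ across the pairing using $P_i\in\mathrm{Sym}(\mathbb{R}^{2l}_{s})$ to get the reversed product on the right, then invoke Lemma \ref{lemm:17-1} to identify it with $Q_{q,p}$.
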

\begin{proof}
    By Lemma \ref{lemm:17-1}, we have
    $
        \langle Q_{q, p}u, v\rangle
        =
        \langle u, P_{q+p-1}P_{q+p-2}\cdots P_{q}v\rangle
        =
        \langle u, Q_{q, p}v\rangle
    $
    for $u, v\in \mathbb{R}^{2l}_{s}$.
\end{proof}
\begin{lemma}
    \label{lemm:17-3} 
    We have $(Q_{q, p})^{2}=I$, that is, $Q_{q, p}$ is involutive.
\end{lemma}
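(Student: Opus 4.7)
The plan is to square $Q_{q,p}$ directly and telescope the product from the inside. Expanding gives
\begin{equation*}
(Q_{q,p})^{2} = (P_{q} P_{q+1} \cdots P_{q+p-1})(P_{q} P_{q+1} \cdots P_{q+p-1}),
\end{equation*}
and Lemma \ref{lemm:17-1} lets me rewrite the second factor in reverse order as $P_{q+p-1} P_{q+p-2} \cdots P_{q}$. The Clifford relation with $i=j$ yields $P_{i}^{2} = \eta_{ii} I$, so the innermost pair $P_{q+p-1}^{2}$ collapses to the scalar $\eta_{q+p-1, q+p-1}$. Iterating this cancellation from the inside out produces
\begin{equation*}
(Q_{q,p})^{2} = \left(\prod_{i=q}^{q+p-1} \eta_{ii}\right) I.
\end{equation*}

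It therefore suffices to show that this scalar product equals $1$. Since $(\eta_{ij}) = J_{r, m-r}$, we have $\eta_{ii} = -1$ precisely when $i \leq r$ and $\eta_{ii} = +1$ otherwise, so the product equals $(-1)^{N}$, where $N$ is the number of indices $i \in \{q, q+1, \ldots, q+p-1\}$ satisfying $i \leq r$. The task reduces to showing $N$ is always even.

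To see this, I would split into three cases by the position of $r$ relative to the interval $[q, q+p-1]$. If $r < q$, then $N = 0$. If $r \geq q + p - 1$, then $N = p$, which is divisible by $4$ by hypothesis. In the remaining case $q \leq r \leq q + p - 2$, we have $N = r - q + 1$; the parity hypothesis $r \equiv 0 \pmod{2}$ from the beginning of Section \ref{sect:4-1}, combined with $q$ being odd (since $q \in \{1, 3, \ldots, m-3\}$), makes $r - q + 1$ even. Hence $N$ is always even and $(Q_{q,p})^{2} = I$.

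The algebraic telescoping is an immediate consequence of $P_{i}^{2} = \eta_{ii} I$; the only substantive step is the parity count of $N$, and the hypotheses on $p$, $r$, and $q$ are imposed precisely to make every case come out even.
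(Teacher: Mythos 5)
Your proof is correct and follows the same route as the paper: use Lemma \ref{lemm:17-1} to reverse the second copy of $Q_{q,p}$, telescope via $P_i^2=\eta_{ii}I$, and show the resulting product of signs is $+1$. Your case split is actually more careful than the paper's terse ``since $r$ is even'' --- it makes explicit that the conclusion also relies on $q$ being odd and $p\equiv 0\pmod 4$, both of which are standing assumptions from the start of \S\ref{sect:4-1}.
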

\begin{proof}
    By Lemma \ref{lemm:17-1}, we have
    $
        (Q_{q, p})^{2}
        =
        Q_{q, p}
        P_{q+p-1}P_{q+p-2}\cdots P_{q}
        =
        \eta_{qq}
        \eta_{q+1, q+1}
        \cdots
        \eta_{q+p-1, q+p-1}
        I
        .
    $
    Since $r$ is even, we have $\eta_{qq}\eta_{q+1, q+1}\cdots\eta_{q+p-1, q+p-1}=1$ .
\end{proof}
\begin{lemma}
    \label{lemm:17-4} 
    For any $a\in \{q, q+1, \ldots, q+p-1\}$, $P_{a}Q_{q, p}=-Q_{q, p}P_{a}$ holds.
\end{lemma}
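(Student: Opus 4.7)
The approach is just sign bookkeeping via the Clifford relations: $P_i P_j = -P_j P_i$ for $i\ne j$ and $P_a^2 = \eta_{aa}I$. Since $a\in\{q,\ldots,q+p-1\}$, the factor $P_a$ appears exactly once inside $Q_{q,p}$, so moving an external $P_a$ across $Q_{q,p}$ in either direction produces one cancellation of $P_a$'s (giving an $\eta_{aa}$) together with a sign coming from anticommuting past the remaining $p-1$ factors.

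Concretely, the plan is to split
\begin{equation*}
    Q_{q,p} = X \cdot P_a \cdot Y, \qquad X := P_q P_{q+1}\cdots P_{a-1}, \qquad Y := P_{a+1}\cdots P_{q+p-1},
\end{equation*}
so that $X$ has $a-q$ factors and $Y$ has $q+p-1-a$ factors, none of which equals $P_a$. Using $P_a P_i = -P_i P_a$ for $i\ne a$, I would first push $P_a$ rightward through $X$ to get
\begin{equation*}
    P_a Q_{q,p} = (-1)^{a-q} X P_a P_a Y = (-1)^{a-q}\eta_{aa}\, X Y,
\end{equation*}
and then push $P_a$ leftward through $Y$ to get
\begin{equation*}
    Q_{q,p} P_a = (-1)^{q+p-1-a} X P_a P_a Y = (-1)^{q+p-1-a}\eta_{aa}\, X Y.
\end{equation*}
Comparing the two expressions yields
\begin{equation*}
    P_a Q_{q,p} = (-1)^{(a-q)-(q+p-1-a)} Q_{q,p} P_a = (-1)^{2(a-q)-p+1} Q_{q,p}P_a = (-1)^{p+1} Q_{q,p} P_a.
\end{equation*}
Since $p\equiv 0\pmod 4$ (in particular $p$ is even), $(-1)^{p+1} = -1$, giving the claim.

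There is no real obstacle here; it is purely combinatorial sign tracking, and the only subtlety is to check that the exponent $2(a-q)-p+1$ is odd, which is exactly where the hypothesis that $p$ is even (given for free from $p\equiv 0\pmod 4$) enters. Note that the stronger divisibility $p\equiv 0\pmod 4$ and the parity of $r$ were needed in Lemmas \ref{lemm:17-1} and \ref{lemm:17-3} but are not essential here; only $p$ even is used in this step.
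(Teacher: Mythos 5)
Your proof is correct and is essentially the same approach as the paper's, which simply slides $P_a$ across all $p$ factors of $Q_{q,p}$ to get the sign $(-1)^{p-1}$ directly (anticommuting past the $p-1$ factors other than $P_a$ itself); your two-sided bookkeeping around the split $Q_{q,p}=XP_aY$ arrives at the equivalent exponent $(-1)^{p+1}$. Your observation that only $p$ even is needed here, not the full $p\equiv 0\pmod 4$, matches the paper's proof, which likewise concludes via ``Since $p$ is even.''
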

\begin{proof}
    By \S \ref{sect:C-S}\ref{change-PQ}, we have 
    $
        P_{a}Q_{q, p}
        =
        (-1)^{p-1}
        Q_{q, p}P_{a}
        .
    $
    Since $p$ is even, we have $(-1)^{p-1}=-1$.
\end{proof}
Let $Q\in \mathrm{End}(\mathbb{R}^{2l}_{s})$ be a linear mapping whose eigenvalues are $-1$ and $1$.
Let $E_{+}(Q)$ and $E_{-}(Q)$ be the eigenspaces of $Q$ associated with $1$ and $-1$, respectively.
For simplicity, we write $E_{+}(Q)$ and $E_{-}(Q)$ as $E_{1}(Q)$ and $E_{-1}(Q)$, respectively.
\begin{proposition}
    \label{prop:18}
    All of the eigenvalues of $Q_{q, p}$ are $-1$ and $1$.
    We have the orthogonal decomposition $\mathbb{R}^{2l}_{s}=E_{+}(Q_{q, p})\oplus E_{-}(Q_{q, p})$.
    Every $z\in \mathbb{R}^{2l}_{s}$ can be uniquely expressed as $z=z_{+}+z_{-}\ (z_{+}\in E_{+}(P),\ z_{-}\in E_{-}(P))$.
    For $\varepsilon\in \{-1, 1\}$, the eigenspace $E_{\varepsilon}(Q_{q, p})$ is an $l$-dimensional non-degenerate subspace of $\mathbb{R}^{2l}_{s}$.
\end{proposition}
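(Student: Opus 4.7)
The plan is to obtain everything from the three algebraic facts already recorded: $Q_{q,p}$ is symmetric (Lemma \ref{lemm:17-2}), $(Q_{q,p})^2=I$ (Lemma \ref{lemm:17-3}), and $P_a Q_{q,p}=-Q_{q,p}P_a$ for every $a\in\{q,\ldots,q+p-1\}$ (Lemma \ref{lemm:17-4}). From the relation $(Q_{q,p})^2=I$, the polynomial $(x-1)(x+1)$ annihilates $Q_{q,p}$. Since this polynomial has distinct real roots, $Q_{q,p}$ is diagonalizable over $\mathbb{R}$ with spectrum contained in $\{-1,1\}$, giving at the level of sets $\mathbb{R}^{2l}_{s}=E_{+}(Q_{q,p})+ E_{-}(Q_{q,p})$. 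Concretely, I would write the two projectors $\tfrac12(I\pm Q_{q,p})$ and check that for any $z\in\mathbb{R}^{2l}_s$ the components $z_\pm:=\tfrac12(I\pm Q_{q,p})z$ lie in $E_\pm(Q_{q,p})$. Uniqueness of the decomposition follows because any $w\in E_{+}(Q_{q,p})\cap E_{-}(Q_{q,p})$ satisfies $w=Q_{q,p}w=-w$, hence $w=0$.

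Next, I would verify orthogonality of $E_{+}(Q_{q,p})$ and $E_{-}(Q_{q,p})$ using Lemma \ref{lemm:17-2}. For $u\in E_{+}(Q_{q,p})$ and $v\in E_{-}(Q_{q,p})$,
\begin{equation*}
    \langle u,v\rangle
    =\langle Q_{q,p}u,v\rangle
    =\langle u,Q_{q,p}v\rangle
    =-\langle u,v\rangle,
\end{equation*}
forcing $\langle u,v\rangle=0$, so the decomposition is orthogonal with respect to the pseudo-inner product.

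For the equality of dimensions, the key is Lemma \ref{lemm:17-4}: pick any $a\in\{q,\ldots,q+p-1\}$ and consider the linear map $P_{a}\colon\mathbb{R}^{2l}_{s}\to\mathbb{R}^{2l}_{s}$. Since $P_{a}$ is invertible (its square equals $\pm I$) and $P_{a}Q_{q,p}=-Q_{q,p}P_{a}$, it exchanges the two eigenspaces: if $z\in E_{\varepsilon}(Q_{q,p})$ then $Q_{q,p}(P_{a}z)=-P_{a}Q_{q,p}z=-\varepsilon P_{a}z$, so $P_{a}z\in E_{-\varepsilon}(Q_{q,p})$. Hence $P_{a}$ restricts to a linear isomorphism $E_{+}(Q_{q,p})\xrightarrow{\sim}E_{-}(Q_{q,p})$, which forces $\dim E_{+}(Q_{q,p})=\dim E_{-}(Q_{q,p})=l$. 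In particular both $\pm 1$ do occur as eigenvalues.

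Finally, for non-degeneracy of each $E_{\varepsilon}(Q_{q,p})$, suppose $u\in E_{\varepsilon}(Q_{q,p})$ is orthogonal to all of $E_{\varepsilon}(Q_{q,p})$; combined with the orthogonality $E_{+}(Q_{q,p})\perp E_{-}(Q_{q,p})$ shown above and the direct sum decomposition, $u$ is then orthogonal to the whole of $\mathbb{R}^{2l}_{s}$, so $u=0$ by non-degeneracy of the ambient pseudo-inner product. The only step that requires a genuine idea rather than bookkeeping is the dimension count, which rests entirely on producing the intertwiner $P_{a}$; everything else is routine consequences of involutivity and symmetry.
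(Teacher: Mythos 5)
Your proof is correct and follows essentially the same route as the paper: the decomposition and uniqueness from involutivity, orthogonality via the symmetry of $Q_{q,p}$, and the dimension count by using a $P_a$ to intertwine the two eigenspaces (the paper chooses $a=q+p-1$ and argues via linear independence of the image of a basis, which is the same idea). You go slightly further than the paper by spelling out the non-degeneracy of each $E_\varepsilon(Q_{q,p})$ — the paper states it in the proposition but leaves it implicit in the proof as a standard consequence of the orthogonal decomposition of a non-degenerate space — so if anything your version is a touch more complete.
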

\begin{proof}
    By Lemma \ref{lemm:17-3}, we have the decomposition $\mathbb{R}^{2l}_{s}=E_{+}(Q_{q, p})\oplus E_{-}(Q_{q, p})$.
    By Lemma \ref{lemm:17-4}, we can verify $\dim{E_{+}(Q_{q, p})}\geqq 1$ and $\dim{E_{-}(Q_{q, p})}\geqq 1$.
    Thus, all of the eigenvalues of $Q_{q, p}$ are $-1$ and $1$.

    By Lemma \ref{lemm:17-2}, for $u\in E_{+}(Q_{q, p})$ and $v\in E_{-}(Q_{q, p})$, we have
    \begin{equation*}
        \langle u, v\rangle
        =
        \langle Q_{q, p}u, v\rangle
        =
        \langle u, Q_{q, p}v\rangle
        =
        -\langle u, v\rangle
        .
    \end{equation*}
    Thus, $\langle u, v\rangle=0$ holds.
    Hence, the decomposition is the orthogonal one.

    Let $\{u_{i}\}_{i=1}^{d_{+}}$ and $\{v_{i}\}_{i=1}^{d_{-}}$ be bases of $E_{+}(Q_{q, p})$ and $E_{-}(Q_{q, p})$, respectively.
    By Lemma \ref{lemm:17-4}, $P_{q+p-1}u_{i}\in E_{-}(Q_{q, p})$ for all $i\in \{1, \ldots, d_{+}\}$.
    Since $P_{q+p-1}$ is regular, the sequence of vectors $\{P_{q+p-1}u_{i}\}_{i=1}^{d_{+}}$ of $E_{-}(Q_{q, p})$ is linearly independent and we have $d_{+}\leqq d_{-}$.
    Similarly, we have $d_{-}\leqq d_{+}$.
    Thus, we have $d_{+}=d_{-}$.
    Since $d_{+}+d_{-}=2l$, we have $d_{+}=d_{-}=l$.
\end{proof}
\begin{remark}
    \label{rema:51}
    $E_{+}(Q_{q, p})\cap S^{2l-1}_{s}\neq \emptyset$ or $E_{-}(Q_{q, p})\cap S^{2l-1}_{s}\neq\emptyset$ holds.
    In fact, let $s_{1}$ and $s_{2}$ be the signatures of $E_{+}(Q_{q, p})$ and $E_{-}(Q_{q, p})$, respectively.
    Since $s\leqq 2l-1$, $s_{1}\leqq l-1$ or $s_{2}\leqq l-1$ holds.
\end{remark}
\begin{remark}
    \label{rema:lm}
    $l\geqq m$ holds.
    In fact, by Remark \ref{rema:51}, we can take $x\in E_{\varepsilon}(P)\cap S^{2l-1}_{s}$ ($\varepsilon=1$ or $\varepsilon=-1$).
    By Lemma \ref{lemm:17-4}, $Qx\in E_{-\varepsilon}(P)$ holds for any $Q\in \Sigma$.
    Thus, the mapping $\Sigma\ni Q\mapsto Qx\in E_{-\varepsilon}(P)$ is an injective isometric linear map.
    Hence, we have $m=\dim{\Sigma}\leqq \dim{E_{-\varepsilon}(P)}=l$.

    If $m=l$, the number of the principal curvatures of isoparametric hypersurfaces of OT-FKM-type is two.
    (We refer to \cite[3.8.(3), (4), (5)]{MR753432}.)
\end{remark}

\subsection{Connectedness of the Focal Variety}
\label{sect:4-2}
We use the settings in \S\ref{sect:4-1}.
We investigate the connectedness of $M_{+}$ under the assumption.
We set $P:=Q_{1, m}=P_{1}P_{2}\cdots P_{m}$.
Let $s_{1}$ and $s_{2}$ be the signatures of $E_{+}(P)$ and $E_{-}(P)$, respectively.
Then $s_{1}+s_{2}=s$ holds by Proposition \ref{prop:18}.
\begin{lemma}
    \label{lemm:52}
    The signatures $s_{1}$ and $s_{2}$ satisfy the following:
    \begin{enumerate}[label=(\arabic*)]
        \item \label{lemm:52-1}
        If $r\in \{0, 2, 4, \ldots, m-2\}$, then $s$ is even and $s_{1}=s_{2}=s/2$ holds.
        \item \label{lemm:52-2}
        If $r=0$, then $s/2\leqq l-m$.
        \item \label{lemm:52-3}
        If $r\in \{2, 4, \ldots, m-2\}$, then $m-r\leqq l/2=s/2$.
        \item \label{lemm:52-4}
        If $r=m$, then one of $(s_{1}, s_{2})=(0, l)$; $m\leqq s_{1}\leqq l-m$ and $m\leqq s_{2}\leqq l-m$; or $(s_{1}, s_{2})=(l, 0)$ holds.
    \end{enumerate}
\end{lemma}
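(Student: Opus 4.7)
The plan is to exploit, for each $x\in E_{\varepsilon}(P)$ with $\langle x,x\rangle\neq 0$, the linear map
\[
    \phi_x\colon \Sigma\longrightarrow E_{-\varepsilon}(P),\qquad Q\longmapsto Qx.
\]
Applying Lemma~\ref{lemm:17-4} to each $P_a$ shows that every $Q\in\Sigma$ anti-commutes with $P$, so $\phi_x(\Sigma)\subset E_{-\varepsilon}(P)$. By \S\ref{sect:C-S}\ref{metric}, $\langle Qx,Q'x\rangle=\langle x,x\rangle\langle Q,Q'\rangle$, so $\phi_x$ is injective and $\phi_x(\Sigma)$ is a non-degenerate $m$-dimensional subspace of $E_{-\varepsilon}(P)$ whose index equals $r$ if $\langle x,x\rangle>0$ and $m-r$ if $\langle x,x\rangle<0$. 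Because a non-degenerate subspace of index $a$ and dimension $m$ embeds in one of index $c$ and dimension $l$ only when $a\leqq c$ and $m-a\leqq l-c$, each admissible $x$ yields two inequalities between $s_{-\varepsilon}$ and $r,m,l$. This single mechanism will drive all four assertions.

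For \textbf{(1)}, since $r<m$ I would pick $a\in\{r+1,\ldots,m\}$; then $P_a^2=I$ and $P_a$ is self-adjoint, hence an isometry of $\mathbb{R}^{2l}_s$, and Lemma~\ref{lemm:17-4} gives $P_aP=-PP_a$, so $P_a$ restricts to an isometry $E_+(P)\to E_-(P)$; consequently $s_1=s_2=s/2$ and $s$ is even. For \textbf{(2)}, (1) together with $s\leqq 2l-1$ gives $s_1=s/2\leqq l-1$, so $E_+(P)$ contains a positive-length vector $x$; then $\phi_x(\Sigma)$ is positive-definite of dimension $m$ in $E_-(P)$, whose positive part has dimension $l-s/2$, whence $s/2\leqq l-m$. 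For \textbf{(3)}, \S\ref{sect:C-S}\ref{index-equiv} forces $s=l$ and (1) gives $s_1=s_2=l/2$; taking a positive-length $x\in E_+(P)$ (which exists since $l\geqq m\geqq 4$) places $\phi_x(\Sigma)$ of signature $(r,m-r)$ inside $E_-(P)$ of signature $(l/2,l/2)$, giving $m-r\leqq l/2$.

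For \textbf{(4)} with $r=m$, every $P_a$ satisfies $P_a^2=-I$ and is therefore an anti-isometry, so the argument of (1) no longer constrains $s_1$ and $s_2$. If $s_1\in\{0,l\}$, we are in one of the extreme cases $(0,l)$ or $(l,0)$. Otherwise $1\leqq s_1\leqq l-1$ and $E_+(P)$ contains both a positive-length vector $x_+$ and a negative-length vector $x_-$. Since $r=m$, $\phi_{x_+}(\Sigma)$ is negative-definite of dimension $m$ inside $E_-(P)$ of signature $(s_2,l-s_2)$, forcing $m\leqq s_2$; and $\phi_{x_-}(\Sigma)$ is positive-definite, forcing $m\leqq l-s_2$. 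Combined with $s_1+s_2=l$, this yields $m\leqq s_1,s_2\leqq l-m$. The main obstacle will be the bookkeeping of signature swaps: tracking how the sign of $\langle x,x\rangle$ flips the index of $\phi_x(\Sigma)$, and noting that when $r=m$ each $P_a$ is itself an anti-isometry, so the clean isometry argument of (1)--(3) breaks down and must be replaced by the two-vector argument above.
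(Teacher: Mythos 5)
Your proof is correct and takes essentially the same approach as the paper: multiply by elements of $\Sigma$ to map across the eigenspace decomposition, then count dimensions of definite subspaces. The one cosmetic difference is in part (4), where you take a positive-length and a negative-length vector both in $E_+(P)$, whereas the paper takes one positive-length vector from each of $E_+(P)$ and $E_-(P)$; the dimension count is the same.
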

\begin{proof}
    \begin{enumerate}[label=(\arabic*)]
        \item 
        Let $\{x_{i}\}_{i=1}^{l}$ be a pseudo-orthonormal basis of $E_{+}(P)$.
        By Lemma \ref{lemm:17-4}, $\{P_{m}x_{i}\}_{i=1}^{l}$ is a pseudo-orthonormal basis of $E_{-}(P)$.
        In fact, we have 
        $
            \langle P_{m}x_{i}, P_{m}x_{j}\rangle
            =
            \langle x_{i}, (P_{m})^{2}x_{j}\rangle
            =
            \langle x_{i}, x_{j}\rangle
        $
        since $r\neq m$.
        Then we have $s_{1}=s_{2}$.
        Since $s_{1}+s_{2}=s$, we have $s_{1}=s_{2}=s/2$.
        \item By Remark \ref{rema:51}, we can take $x\in E_{\varepsilon}(P)\cap S^{2l-1}_{s}$ ($\varepsilon=1$ or $\varepsilon=-1$).
        By Lemma \ref{lemm:17-4} and \S\ref{sect:C-S}\ref{metric}, $\{P_{j}x\}_{j=1}^{m}$ is a pseudo-orthonormal system of $E_{-\varepsilon}(P)$.
        We note that $\langle P_{j}x, P_{j}x\rangle=1\ (j\in \{1, \ldots, m\})$ holds.
        By \ref{lemm:52-1}, we have $m\leqq l-s/2$.
        \item The proof is similar to \ref{lemm:52-2}.
        By \S\ref{sect:C-S}\ref{index-equiv}, we have $m-r\leqq l-s/2=l/2$.
        \item 
        By \S\ref{sect:C-S}\ref{index-equiv}, $s_{1}+s_{2}=s=l$ holds.
        Thus, $(s_{1}, s_{2})=(0, l), (l, 0)$ could occur.
        We assume that $s_{1}>0$ and $s_{2}>0$.
        Since $0<s_{1}=l-s_{2}<l$ and $0<s_{2}=l-s_{1}<l$ hold, we can take $x\in E_{+}(P)\cap S^{2l-1}_{s}$ and $y\in E_{-}(P)\cap S^{2l-1}_{s}$.
        By Lemma \ref{lemm:17-4} and \S\ref{sect:C-S}\ref{metric}, $\{P_{j}x\}_{j=1}^{m}$ and $\{P_{j}y\}_{j=1}^{m}$ are pseudo-orthonormal systems of $E_{-}(P)$ and $E_{+}(P)$, respectively.
        We note that $\langle P_{j}x, P_{j}x\rangle=-1\ (j\in \{1, \ldots, m\})$ and $\langle P_{j}y, P_{j}y\rangle=-1\ (j\in \{1, \ldots, m\})$ hold.
        Then we have $m\leqq s_{1}=l-s_{2}$ and $m\leqq s_{2}=l-s_{1}$.
    \end{enumerate}
\end{proof}
We divide into the following cases:
\begin{enumerate}[label=(\alph*)]
    \item \label{case:a}
    $r=0$ and $s/2=l-m$.
    \item \label{case:b}
    $r\in \{2, 4, \ldots, m-2\}$ and $0<m-r=l/2$.
    \item \label{case:c}
    \begin{enumerate}[label=(c-\arabic*)]
        \item \label{case:c-1}
        $r=m$ and $(s_{1}, s_{2})=(0, l)$.
        \item \label{case:c-2}
        $r=m$ and $(s_{1}, s_{2})=(l, 0)$.
    \end{enumerate}
    \item \label{case:d}
    \begin{enumerate}[label=(d-\arabic*)]
        \item \label{case:d-1}
        $r=0$ and $s/2<l-m$.
        \item \label{case:d-2}
        $r\in \{2, 4, \ldots, m-2\}$ and $m-r<l/2$.
        \item \label{case:d-3}
        $r=m$, $s_{1}+s_{2}=l$, $m\leqq s_{1}\leqq l-m$ and $m\leqq s_{2}\leqq l-m$.
    \end{enumerate}
\end{enumerate}
\ref{case:a}, \ref{case:b}, \ref{case:c-1}, \ref{case:c-2}, \ref{case:d-1}, \ref{case:d-2} and \ref{case:d-3} are all cases under the assumption by Lemma \ref{lemm:52}.
\begin{lemma}
    \label{lemm:54}
    Every $z\in \mathbb{R}^{2l}_{s}$ can be uniquely expressed as $z=z_{+}+z_{-}\ (z_{+}\in E_{+}(P),\ z_{-}\in E_{-}(P))$.
    Then $z\in M_{+}$ if and only if $\langle z_{+}, z_{+}\rangle+\langle z_{-}, z_{-}\rangle=1$ and $\langle P_{j}z_{+}, z_{-}\rangle=0$ for all $j\in \{1, \ldots, m\}$. 
\end{lemma}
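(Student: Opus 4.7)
The plan is to unpack the definition of $M_{+}$ given in Section \ref{sect:F-V} and evaluate each defining condition with respect to the decomposition $\mathbb{R}^{2l}_{s} = E_{+}(P) \oplus E_{-}(P)$ supplied by Proposition \ref{prop:18}. Recall that $z \in M_{+}$ if and only if $\langle z, z\rangle = 1$ and $\langle P_{j}z, z\rangle = 0$ for every $j \in \{1, \ldots, m\}$. So my task splits into rewriting these two conditions in terms of $z_{+}$ and $z_{-}$.

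For the norm condition, I would expand $\langle z, z\rangle = \langle z_{+} + z_{-}, z_{+} + z_{-}\rangle$ and use the \emph{orthogonality} of the decomposition from Proposition \ref{prop:18} to kill the cross term $2\langle z_{+}, z_{-}\rangle$. This immediately reduces $\langle z, z\rangle = 1$ to $\langle z_{+}, z_{+}\rangle + \langle z_{-}, z_{-}\rangle = 1$.

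For the perpendicularity conditions, the key observation I would establish first is that each $P_{j}$ interchanges $E_{+}(P)$ and $E_{-}(P)$. This follows from Lemma \ref{lemm:17-4} applied to $P = Q_{1,m}$: the identity $P_{j}P = -P P_{j}$ shows that if $u \in E_{\varepsilon}(P)$ then $P(P_{j}u) = -P_{j}(Pu) = -\varepsilon P_{j}u$, i.e.\ $P_{j}u \in E_{-\varepsilon}(P)$. Expanding
\[
\langle P_{j}z, z\rangle = \langle P_{j}z_{+}, z_{+}\rangle + \langle P_{j}z_{+}, z_{-}\rangle + \langle P_{j}z_{-}, z_{+}\rangle + \langle P_{j}z_{-}, z_{-}\rangle,
\]
the first and last terms vanish by orthogonality (since $P_{j}z_{\pm} \in E_{\mp}(P)$). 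The two surviving terms are equal by the symmetry $P_{j} \in \mathrm{Sym}(\mathbb{R}^{2l}_{s})$, giving $\langle P_{j}z, z\rangle = 2\langle P_{j}z_{+}, z_{-}\rangle$. Hence $\langle P_{j}z, z\rangle = 0$ for all $j$ is equivalent to $\langle P_{j}z_{+}, z_{-}\rangle = 0$ for all $j$.

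There is no real obstacle; the argument is a direct computation once one invokes the three ingredients already in hand: orthogonality of $E_{+}(P) \perp E_{-}(P)$ from Proposition \ref{prop:18}, the anticommutation $P_{j}P = -PP_{j}$ from Lemma \ref{lemm:17-4}, and the symmetry $P_{j} \in \mathrm{Sym}(\mathbb{R}^{2l}_{s})$. Combining the two equivalences yields the claim.
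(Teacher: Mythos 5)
Your argument is essentially identical to the paper's: both reduce the norm condition via the orthogonality from Proposition \ref{prop:18}, use Lemma \ref{lemm:17-4} to see that $P_{j}$ swaps $E_{+}(P)$ and $E_{-}(P)$, and then expand $\langle P_{j}z, z\rangle$ to isolate $2\langle P_{j}z_{+}, z_{-}\rangle$. The proposal is correct and follows the same route.
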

\begin{proof}
    By Proposition \ref{prop:18}, $z\in S^{2l-1}_{s}$ if and only if $\langle z_{+}, z_{+}\rangle+\langle z_{-}, z_{-}\rangle=1$.
    We fix $j\in \{1,\ldots, m\}$.
    By Lemma \ref{lemm:17-4}, we note that $P_{j}z_{+}\in E_{-}(P)$ and $P_{j}z_{-}\in E_{+}(P)$ hold.
    We can compute 
    \begin{align*}
        \langle P_{j}z, z\rangle
        &=
        \langle P_{j}z_{+}, z_{+}\rangle
        +
        2\langle P_{j}z_{+}, z_{-}\rangle
        +
        \langle P_{j}z_{-}, z_{-}\rangle
        =
        2\langle P_{j}z_{+}, z_{-}\rangle
        .
    \end{align*}
\end{proof}
We define 
\begin{align*}
    M_{+, 1}
    &:=
    \left\{
        z\in M_{+}
        \ \middle|\ 
        z=z_{+}+z_{-},\ 
        z_{+}\in E_{+}(P),\ 
        z_{-}\in E_{-}(P),\ 
        \langle z_{+}, z_{+}\rangle\geqq 1
    \right\}
    ,\\
    M_{+, 2}
    &:=
    \left\{
        z\in M_{+}
        \ \middle|\ 
        z=z_{+}+z_{-},\  
        z_{+}\in E_{+}(P),\ 
        z_{-}\in E_{-}(P),\ 
        \langle z_{+}, z_{+}\rangle\leqq 0
    \right\}
    ,\\
    M_{+, 3}
    &:=
    \left\{
        z\in M_{+}
        \ \middle|\ 
        z=z_{+}+z_{-},\  
        z_{+}\in E_{+}(P),\ 
        z_{-}\in E_{-}(P),\ 
        0<\langle z_{+}, z_{+}\rangle<1
    \right\}
    .
\end{align*}
\begin{remark}
    \label{rema:53}
    We have $M_{+}=M_{+, 1}\sqcup M_{+, 2}\sqcup M_{+, 3}$ by the definition of $M_{+, 1}$, $M_{+, 2}$ and $M_{+, 3}$.
\end{remark}
\begin{lemma}
    \label{lemm:56}
    $E_{+}(P)\cap S^{2l-1}_{s}\subset M_{+, 1}$ and $E_{-}(P)\cap S^{2l-1}_{s}\subset M_{+, 2}$ hold.
    In particular, $M_{+}$ is non-empty.
\end{lemma}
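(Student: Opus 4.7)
The plan is to unpack both inclusions directly from Lemma \ref{lemm:54}, since once $z$ lies in a single eigenspace of $P$ the membership criterion for $M_{+}$ collapses to triviality.

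First I would take $z\in E_{+}(P)\cap S^{2l-1}_{s}$ and decompose $z=z_{+}+z_{-}$ with $z_{+}\in E_{+}(P)$, $z_{-}\in E_{-}(P)$ as in Proposition \ref{prop:18}. By the uniqueness of the decomposition, $z_{+}=z$ and $z_{-}=0$. The condition $\langle z,z\rangle=1$ then reads $\langle z_{+},z_{+}\rangle+\langle z_{-},z_{-}\rangle=1$, while $\langle P_{j}z_{+},z_{-}\rangle=0$ for every $j\in\{1,\ldots,m\}$ holds automatically because $z_{-}=0$. Hence the two conditions of Lemma \ref{lemm:54} are satisfied, so $z\in M_{+}$. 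Moreover $\langle z_{+},z_{+}\rangle=\langle z,z\rangle=1\geqq 1$, which places $z$ in $M_{+,1}$ by the definition of $M_{+,1}$.

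The second inclusion is symmetric: for $z\in E_{-}(P)\cap S^{2l-1}_{s}$ the decomposition gives $z_{+}=0$ and $z_{-}=z$, so the Lemma \ref{lemm:54} conditions are again immediate and $\langle z_{+},z_{+}\rangle=0\leqq 0$, so $z\in M_{+,2}$.

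Finally, for the non-emptiness statement, I would invoke Remark \ref{rema:51}, which guarantees that at least one of $E_{+}(P)\cap S^{2l-1}_{s}$ or $E_{-}(P)\cap S^{2l-1}_{s}$ is non-empty; combined with either of the two inclusions just proved, this forces $M_{+}\neq\emptyset$. There is no real obstacle here: the proof is essentially a bookkeeping check that a vector living entirely in one eigenspace of $P$ automatically satisfies the off-diagonal vanishing conditions $\langle P_{j}z_{+},z_{-}\rangle=0$.
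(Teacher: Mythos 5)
Your proof is correct and follows exactly the same route as the paper: decompose $z$ via Proposition \ref{prop:18}, note that one component vanishes, check the two conditions of Lemma \ref{lemm:54} (the second being trivial), read off $\langle z_{+},z_{+}\rangle$ to place $z$ in $M_{+,1}$ or $M_{+,2}$, and invoke Remark \ref{rema:51} for non-emptiness. You have simply unpacked the details the paper leaves implicit.
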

\begin{proof}
    By Lemma \ref{lemm:54}, if $z\in E_{+}(P)\cap S^{2l-1}_{s}$, then $z\in M_{+}$.
    Since $\langle z, z\rangle=1$, $z\in M_{+, 1}$ holds.
    The proof of $E_{-}(P)\cap S^{2l-1}_{s}\subset M_{+, 2}$ is similar to the former.
    By Remark \ref{rema:51}, $M_{+}\neq \emptyset$ holds.
\end{proof}
By Lemma \ref{lemm:56} and \S\ref{sect:F-V}, $M_{+}$ is a non-degenerate submanifold of codimension $m$ in $S^{2l-1}_{s}$.
\begin{lemma}
    \label{lemm:57}
    For $\varepsilon\in \{-1, 1\}$, if $E_{\varepsilon}(P)\cap S^{2l-1}_{s}$ is non-empty, then $E_{\varepsilon}(P)\cap S^{2l-1}_{s}$ is path-connected.
\end{lemma}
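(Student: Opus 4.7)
The plan is to identify $E_{\varepsilon}(P)\cap S^{2l-1}_{s}$ with the standard pseudo-sphere of the non-degenerate $l$-dimensional subspace $E_{\varepsilon}(P)$, and then reduce the claim to the well-known connectedness of pseudo-spheres. Letting $s_{\varepsilon}$ denote the signature of $E_{\varepsilon}(P)$ and choosing a pseudo-orthonormal basis gives a linear isometry $(E_{\varepsilon}(P),\langle\cdot,\cdot\rangle|_{E_{\varepsilon}(P)})\cong \mathbb{R}^{l}_{s_{\varepsilon}}$, under which $E_{\varepsilon}(P)\cap S^{2l-1}_{s}$ corresponds to $S^{l-1}_{s_{\varepsilon}}$.

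Next I would invoke the elementary diffeomorphism $S^{l-1}_{k}\cong \mathbb{R}^{k}\times S^{l-1-k}$ for $0\leqq k\leqq l-1$, obtained by writing $\langle x,x\rangle=1$ in standard coordinates on $\mathbb{R}^{l}_{k}$: the $k$ negative coordinates may be chosen freely, and the $l-k$ positive coordinates are constrained to a Euclidean sphere of positive radius depending on the negative part. This produces the standard trichotomy: $S^{l-1}_{s_{\varepsilon}}$ is path-connected if $s_{\varepsilon}\leqq l-2$, splits into two components if $s_{\varepsilon}=l-1$, and is empty if $s_{\varepsilon}=l$.

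It then remains to check, case by case within the division \textbf{(a)}--\textbf{(d)} of \S\ref{sect:4-2}, that whenever $E_{\varepsilon}(P)\cap S^{2l-1}_{s}$ is non-empty, the borderline signature $s_{\varepsilon}=l-1$ is excluded. Combining Lemma \ref{lemm:52} with the inequality $l\geqq m\geqq 4$ from Remark \ref{rema:lm}: in cases \textbf{(a)} and \textbf{(d-1)}, $s_{\varepsilon}=s/2\leqq l-m\leqq l-4$; in cases \textbf{(b)} and \textbf{(d-2)}, $s_{\varepsilon}=l/2\leqq l-2$; in case \textbf{(d-3)}, $s_{\varepsilon}\leqq l-m\leqq l-4$; and in cases \textbf{(c-1)}, \textbf{(c-2)}, one of $E_{\pm}(P)$ has signature $l$ (giving empty intersection, hence nothing to prove) while the other has signature $0$ and meets $S^{2l-1}_{s}$ in the ordinary Euclidean sphere $S^{l-1}$, which is path-connected since $l-1\geqq 3$.

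The argument is essentially bookkeeping once the pseudo-sphere trichotomy is in place, so the only genuine obstacle is the case check itself — specifically, verifying that the borderline value $s_{\varepsilon}=l-1$ really is ruled out in every subcase, and that \textbf{(a)}--\textbf{(d-3)} are exhaustive (already asserted in \S\ref{sect:4-2} via Lemma \ref{lemm:52}).
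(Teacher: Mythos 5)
Your proof is correct and follows essentially the same route as the paper: identify $E_{\varepsilon}(P)\cap S^{2l-1}_{s}$ with $S^{l-1}_{s_{\varepsilon}}$, invoke the standard pseudo-sphere trichotomy (connected unless $s_{\varepsilon}=l-1$), and rule out the borderline case using Lemma \ref{lemm:52} together with $l\geqq m\geqq 4$ in each of \ref{case:a}--\ref{case:d-3}. The only cosmetic difference is that in cases \ref{case:b} and \ref{case:d-2} you argue directly ($s_{\varepsilon}=l/2\leqq l-2$) where the paper runs a short contradiction, but the content is the same.
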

\begin{proof}
    We consider the case of $\varepsilon=1$.
    Since $E_{+}(P)\cap S^{2l-1}_{s}$ is non-empty, it is necessary that $s_{1}\leqq l-1$.
    Here, $E_{+}(P)\cap S^{2l-1}_{s}\cong S^{l-1}_{s_{1}}$ holds.
    Since $S^{l-1}_{s_{1}}$ is disconnected if and only if $s_{1}=l-1$, it is suffices to show that $s_{1}<l-1$.
    In \ref{case:a} and \ref{case:d-1}, by Lemma \ref{lemm:52}\ref{lemm:52-1}, \ref{lemm:52-2}, we have $s_{1}=s/2\leqq l-m\leqq l-4<l-1$.
    In \ref{case:b} and \ref{case:d-2}, we suppose that $s_{1}\geqq l-1$ holds.
    By Lemma \ref{lemm:52}\ref{lemm:52-1} and \S\ref{sect:C-S}\ref{index-equiv}, we have $l-1\leqq s_{1}=s/2=l/2$.
    By Remark \ref{rema:lm}, we have $2\geqq l\geqq m\geqq 4$, which is a contradiction.
    By Lemma \ref{lemm:52}\ref{lemm:52-4}, the following claims are true: In \ref{case:c-1}, $s_{1}<l-1$ holds; in \ref{case:d-3}, we have
    $
        s_{1}\leqq l-m\leqq l-4<l-1;
    $
    the case \ref{case:c-2} do not occur under the assumption.

    The proof of the case of $\varepsilon=-1$ is similar to the one for $\varepsilon=1$.
\end{proof}
\begin{lemma}
    \label{lemm:60}
    We assume that $M_{+, 3}\neq \emptyset$.
    We fix $z\in M_{+, 3}$.
    The element $z$ can be uniquely expressed as $z=z_{+}+z_{-}\ (z_{+}\in E_{+}(P),\ z_{-}\in E_{-}(P))$.
    Since $z\in M_{+, 3}$, we have $\langle z_{+}, z_{+}\rangle, \langle z_{-}, z_{-}\rangle\in (0, 1)$ by Lemma \ref{lemm:54}.
    Let
    \begin{equation*}
        x
        :=
        \frac{z_{+}}{\sqrt{\langle z_{+}, z_{+}\rangle}}
        \in E_{+}(P)\cap S^{2l-1}_{s}
        ,\quad
        y
        :=
        \frac{z_{-}}{\sqrt{\langle z_{-}, z_{-}\rangle}}
        \in E_{-}(P)\cap S^{2l-1}_{s}
        .
    \end{equation*}
    Then there exists a path in $M_{+}$ from $x$ to $y$ through $z$.
    Moreover, $\langle P_{j}x, y\rangle=0$ holds for all $j\in \{1, \ldots, m\}$.
\end{lemma}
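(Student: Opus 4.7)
The plan is to construct an explicit path in $M_{+}$ connecting $x$ to $y$ that passes through $z$, namely a great circle arc in the plane spanned by $x$ and $y$, and then check that this entire arc lies in $M_{+}$ by exploiting the orthogonality of the eigenspaces $E_{+}(P)$ and $E_{-}(P)$ together with the fact that each $P_{j}$ anticommutes with $P$.

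First I would dispose of the orthogonality statement $\langle P_{j}x,y\rangle=0$. Since $x,y$ are positive scalar multiples of $z_{+},z_{-}$ respectively, this is a direct consequence of Lemma \ref{lemm:54} applied to $z\in M_{+}$, which forces $\langle P_{j}z_{+},z_{-}\rangle=0$ for every $j$. Next I would record the normalizations: by definition $\langle x,x\rangle=\langle y,y\rangle=1$, and by Proposition \ref{prop:18} the decomposition $\mathbb{R}^{2l}_{s}=E_{+}(P)\oplus E_{-}(P)$ is orthogonal, so $\langle x,y\rangle=0$. Moreover, Lemma \ref{lemm:17-4} (applied with $q=1$, $p=m$) shows that each $P_{j}$ interchanges $E_{+}(P)$ and $E_{-}(P)$, so $P_{j}x\in E_{-}(P)$ and $P_{j}y\in E_{+}(P)$; in particular $\langle P_{j}x,x\rangle=\langle P_{j}y,y\rangle=0$.

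With these ingredients in place I would introduce the candidate path
\begin{equation*}
    \gamma(t):=\cos(t)\,x+\sin(t)\,y,\qquad t\in[0,\pi/2].
\end{equation*}
Computing $\langle\gamma(t),\gamma(t)\rangle=\cos^{2}(t)+\sin^{2}(t)=1$ using orthonormality of $\{x,y\}$ shows $\gamma(t)\in S^{2l-1}_{s}$. Expanding $\langle P_{j}\gamma(t),\gamma(t)\rangle$ gives
\begin{equation*}
    \cos^{2}(t)\langle P_{j}x,x\rangle+2\cos(t)\sin(t)\langle P_{j}x,y\rangle+\sin^{2}(t)\langle P_{j}y,y\rangle,
\end{equation*}
and all three inner products vanish by the previous paragraph, so Lemma \ref{lemm:54} (or the description of $M_{+}$ in \S\ref{sect:F-V}) gives $\gamma(t)\in M_{+}$ for every $t$.

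Finally I would check that $\gamma$ passes through $z$. Setting $\alpha:=\sqrt{\langle z_{+},z_{+}\rangle}$ and $\beta:=\sqrt{\langle z_{-},z_{-}\rangle}$, both of which lie in $(0,1)$, the identity $\alpha^{2}+\beta^{2}=\langle z,z\rangle=1$ (from $z\in S^{2l-1}_{s}$ and Lemma \ref{lemm:54}) lets me pick $t_{0}\in(0,\pi/2)$ with $\cos(t_{0})=\alpha$ and $\sin(t_{0})=\beta$; then $\gamma(t_{0})=\alpha x+\beta y=z_{+}+z_{-}=z$, so $\gamma$ is the desired path from $\gamma(0)=x$ through $\gamma(t_{0})=z$ to $\gamma(\pi/2)=y$. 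I do not anticipate a real obstacle here: everything follows from the eigenspace decomposition and the anticommutation between $P_{j}$ and $P$, with the only subtlety being to verify that the scalars $\alpha,\beta$ are well defined and positive, which is guaranteed by the hypothesis $z\in M_{+,3}$.
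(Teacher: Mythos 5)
Your proposal is correct and follows essentially the same route as the paper: you use the same great-circle path $\gamma(t)=(\cos t)x+(\sin t)y$, the same normalization via $\cos^{2}t_{0}+\sin^{2}t_{0}=1$, and the same key inputs (Lemma \ref{lemm:54}, the orthogonal eigenspace decomposition of Proposition \ref{prop:18}, and the anticommutation from Lemma \ref{lemm:17-4}). The only cosmetic difference is that you verify $\gamma(t)\in M_{+}$ by expanding $\langle P_{j}\gamma(t),\gamma(t)\rangle$ directly and you derive $\langle P_{j}x,y\rangle=0$ directly from $x,y$ being positive multiples of $z_{+},z_{-}$, whereas the paper verifies membership via the $(z_{+},z_{-})$-criterion of Lemma \ref{lemm:54} and extracts $\langle P_{j}x,y\rangle=0$ by evaluating its displayed identity at $t=\pi/4$.
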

\begin{proof}
    We take $t_{z}\in (0, \pi/2)$ uniquely such that 
    \begin{equation*}
        \cos{t_{z}}=\sqrt{\langle z_{+}, z_{+}\rangle}
        ,\ 
        \sin{t_{z}}=\sqrt{\langle z_{-}, z_{-}\rangle}
        .
    \end{equation*}
    The curve $\alpha\colon[0, \pi/2]\to M_{+}$ is given by 
    \begin{equation*}
        \alpha(t)
        :=
        (\cos{t})x+(\sin{t})y
        .
    \end{equation*}
    Then $\alpha$ is well-defined.
    In fact, we can compute $\langle (\cos{t})x, (\cos{t})x\rangle+\langle (\sin{t})y, (\sin{t})y\rangle=1$. 
    Since $z\in M_{+}$, we have $\langle P_{j}z_{+}, z_{-}\rangle=0$ by Lemma \ref{lemm:54}.
    Thus, we have 
    \begin{equation}
        \langle P_{j}(\cos{t})x, (\sin{t})y\rangle
        =
        \frac{\cos{t}\cdot \sin{t}}{\sqrt{\langle z_{+}, z_{+}\rangle}\sqrt{\langle z_{-}, z_{-}\rangle}}
        \langle P_{j}z_{+}, z_{-}\rangle
        =
        0
        ,\quad
        j\in \{1, 2, \ldots, m\}
        .
        \label{eq:60}
    \end{equation}
    By Lemma \ref{lemm:54}, we have $\alpha(t)\in M_{+}$ for any $t\in [0, \pi/2]$.
    Since $\alpha(0)=x$, $\alpha(t_{z})=z$ and $\alpha(\pi/2)=y$, the curve $\alpha$ is a path from $x$ to $y$ through $z$.
    
    By the equation \eqref{eq:60} with $t=\pi/4$, we have $\langle P_{j}x, y\rangle=0$ for all $j\in \{1, \ldots, m\}$.
\end{proof}
\begin{lemma}
    \label{lemm:58}
    For $\varepsilon\in \{-1, 1\}$, if $M_{+, (3-\varepsilon)/2}$ is non-empty, then $E_{\varepsilon}(P)\cap S^{2l-1}_{s}$ is non-empty and $M_{+, (3-\varepsilon)/2}$ is path-connected.
\end{lemma}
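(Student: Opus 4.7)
The plan is to treat the case $\varepsilon=1$; the case $\varepsilon=-1$ is strictly symmetric by swapping $E_{+}(P)\leftrightarrow E_{-}(P)$. Non-emptiness of $E_{+}(P)\cap S^{2l-1}_{s}$ is immediate from any $z\in M_{+, 1}$ with decomposition $z=z_{+}+z_{-}$: since $\langle z_{+}, z_{+}\rangle\geqq 1>0$, the vector $z_{+}/\sqrt{\langle z_{+}, z_{+}\rangle}$ lies in $E_{+}(P)\cap S^{2l-1}_{s}$.

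For path-connectedness of $M_{+, 1}$ I would reduce, via Lemma \ref{lemm:57} applied to $E_{+}(P)\cap S^{2l-1}_{s}$, to joining each $z\in M_{+, 1}$ by a path in $M_{+, 1}$ to this normalization $z_{+}/\sqrt{\langle z_{+}, z_{+}\rangle}$. Writing $\alpha_{+}:=\langle z_{+}, z_{+}\rangle\geqq 1$ and $\alpha_{-}:=1-\alpha_{+}\leqq 0$, I would use the explicit path
\begin{equation*}
    \alpha(t):=c(t)z_{+}+(1-t)z_{-},\quad c(t):=\sqrt{(1-(1-t)^{2}\alpha_{-})/\alpha_{+}},\quad t\in [0, 1],
\end{equation*}
with $c(t)$ smooth because its radicand is bounded below by $1/\alpha_{+}>0$. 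Then $\alpha(0)=z$ and $\alpha(1)=z_{+}/\sqrt{\alpha_{+}}$ by direct substitution.

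To close the argument I would verify three properties of $\alpha(t)$, all routine. The sphere condition $\langle \alpha(t), \alpha(t)\rangle=1$ holds by the very choice of $c(t)$, using $\langle z_{+}, z_{-}\rangle=0$ from Proposition \ref{prop:18}. The focal-variety condition $\langle P_{j}\alpha(t), \alpha(t)\rangle=0$ collapses, after $\langle P_{j}z_{\pm}, z_{\pm}\rangle=0$ is eliminated by the eigenspace orthogonality (since $P_{j}$ anticommutes with $P$ by Lemma \ref{lemm:17-4}), to $\langle P_{j}z_{+}, z_{-}\rangle=0$, which holds because $z\in M_{+}$ by Lemma \ref{lemm:54}. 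The $E_{+}(P)$-component of $\alpha(t)$ has squared norm $c(t)^{2}\alpha_{+}=1-(1-t)^{2}\alpha_{-}\geqq 1$, so $\alpha(t)\in M_{+, 1}$. I do not foresee a genuine obstacle; the main subtlety is producing a single path that works uniformly whether $\alpha_{-}<0$ or $\alpha_{-}=0$ (the latter allowing $z_{-}$ to be a nonzero null vector), which this choice of $c(t)$ handles without a case split.
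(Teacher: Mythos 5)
Your proof is correct and follows the same overall strategy as the paper: reduce path-connectedness of $M_{+,1}$ to that of $E_{+}(P)\cap S^{2l-1}_{s}$ (Lemma \ref{lemm:57}, with $E_{+}(P)\cap S^{2l-1}_{s}\subset M_{+,1}$ from Lemma \ref{lemm:56}) by joining each $z\in M_{+,1}$ to the normalization of $z_{+}$ along a path that stays in $M_{+,1}$. The one genuine difference is that the paper splits into two cases according to whether $\langle z_{+},z_{+}\rangle>1$ or $=1$, using a hyperbolic-parameter arc $\alpha_{1}(t)=(\cosh t)x+(\sinh t)y$ in the first and a linear path $\alpha_{2}(t)=z_{+}+tz_{-}$ in the second (the split being forced precisely because $z_{-}$ cannot be normalized when it is null), whereas your single formula $\alpha(t)=c(t)z_{+}+(1-t)z_{-}$ with $c(t)=\sqrt{\bigl(1-(1-t)^{2}\alpha_{-}\bigr)/\alpha_{+}}$ handles both cases uniformly because $c$ depends only on $\alpha_{-}=\langle z_{-},z_{-}\rangle$ and never requires dividing by it. All the verifications (smoothness of $c$, the sphere condition, the focal-variety condition via Lemma \ref{lemm:54}, and $c(t)^{2}\alpha_{+}\geqq 1$) check out, so this is a modest but clean simplification of the paper's argument.
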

\begin{proof}
    We consider the case of $\varepsilon=1$.
    We show that there exists a path in $M_{+, 1}$ from each point of $M_{+, 1}$ to a point of $E_{+}(P)\cap S^{2l-1}_{s}$.
    We take an arbitrary point $z\in M_{+, 1}$.
    The element $z$ can be uniquely expressed as $z=z_{+}+z_{-}\ (z_{+}\in E_{+}(P),\ z_{-}\in E_{-}(P))$.
    
    We consider the case where $\langle z_{+}, z_{+}\rangle>1$.
    By Lemma \ref{lemm:54}, we have $\langle z_{-}, z_{-}\rangle<0$.
    Let 
    \begin{equation*}
        x
        :=
        \frac{z_{+}}{\sqrt{\langle z_{+}, z_{+}\rangle}}
        ,\quad
        y
        :=
        \frac{z_{-}}{\sqrt{-\langle z_{-}, z_{-}\rangle}}
        .
    \end{equation*}
    We take $t_{z}\in (0, \infty)$ uniquely such that 
    \begin{equation*}
        \cosh{t_{z}}=\sqrt{\langle z_{+}, z_{+}\rangle}
        ,\quad
        \sinh{t_{z}}=\sqrt{-\langle z_{-}, z_{-}\rangle}
        .
    \end{equation*}
    The curve $\alpha_{1}\colon[0, t_{z}]\to M_{+, 1}$ is given by 
    \begin{equation*}
        \alpha_{1}(t)
        :=
        (\cosh{t})x+(\sinh{t})y
        .
    \end{equation*}
    We can verify that $\alpha_{1}(t)\in M_{+, 1}\ (\forall t\in [0, t_{z}])$, $\alpha_{1}(t_{z})=z$ and $\alpha_{1}(0)=x$.
    (We refer to the proof of Lemma \ref{lemm:60}.)
    Thus, $\alpha_{1}$ is a path in $M_{+, 1}$ from $x$ to $z$.

    We consider case where $\langle z_{+}, z_{+}\rangle=1$.
    The curve $\alpha_{2}\colon[0, 1]\to M_{+, 1}$ is given by 
    \begin{equation*}
        \alpha_{2}(t)=z_{+}+tz_{-}
        .
    \end{equation*}
    We can verify that $\alpha_{2}(t)\in M_{+, 1}\ (\forall t\in [0, 1])$, $\alpha_{2}(1)=z$ and $\alpha_{2}(0)=z_{+}$.
    (We refer to the proof of Lemma \ref{lemm:60}.)
    Thus, $\alpha_{2}$ is a path in $M_{+, 1}$ from $z_{+}$ to $z$.
    
    Since $x$ in the first case and $z_{+}$ in the second case are elements of $E_{+}(P)\cap S^{2l-1}_{s}$, the set $E_{+}(P)\cap S^{2l-1}_{s}$ is non-empty.
    Since $z$ is arbitrary, we can verify that $M_{+, 1}$ is path-connected by Lemma \ref{lemm:57}.

    The proof of the case of $\varepsilon=-1$ is similar to the one of $\varepsilon=1$.
\end{proof}
\begin{proposition}
    \label{prop:51}
    In \ref{case:a} and \ref{case:b}, $M_{+}$ is disconnected.
    All of the connected components of $M_{+}$ are $M_{+, 1}$ and $M_{+, 2}$.
\end{proposition}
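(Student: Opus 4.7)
The plan is to reduce everything to showing $M_{+,3}=\emptyset$; once this is done, the remaining claims follow from the lemmas just established. Suppose for contradiction that $M_{+,3}\neq \emptyset$, and fix $z\in M_{+,3}$. Applying Lemma \ref{lemm:60}, we obtain $x\in E_{+}(P)\cap S^{2l-1}_{s}$ and $y\in E_{-}(P)\cap S^{2l-1}_{s}$ with $\langle P_{j}x,y\rangle=0$ for every $j\in\{1,\ldots,m\}$. By \S\ref{sect:C-S}\ref{metric}, $\langle P_{i}x,P_{j}x\rangle=\eta_{ij}$, so $\{P_{j}x\}_{j=1}^{m}$ is a pseudo-orthonormal system in $E_{-}(P)$ spanning a non-degenerate subspace $W$ of signature $(r,m-r)$, and the condition $\langle P_{j}x,y\rangle=0$ places $y$ in $W^{\perp}\cap E_{-}(P)$.

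The main step is a signature count showing that $W^{\perp}\cap E_{-}(P)$ is totally timelike in both cases \ref{case:a} and \ref{case:b}. In case \ref{case:a} we have $r=0$ and $s/2=l-m$; by Lemma \ref{lemm:52}\ref{lemm:52-1} the space $E_{-}(P)$ has dimension $l$ and index $l-m$, hence exactly $m$ spacelike directions, which are all consumed by $W$. Thus $W^{\perp}\cap E_{-}(P)$ is totally timelike of dimension $l-m$. In case \ref{case:b} we have $r>0$, so $s=l$ by \S\ref{sect:C-S}\ref{index-equiv}, and combined with $m-r=l/2$ and Lemma \ref{lemm:52}\ref{lemm:52-1}, $E_{-}(P)$ has dimension $l=2(m-r)$ with $m-r$ spacelike and $m-r$ timelike directions; again $W$ consumes all spacelike directions, so $W^{\perp}\cap E_{-}(P)$ is totally timelike of dimension $m-2r$. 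In either case $y\in W^{\perp}\cap E_{-}(P)$ forces $\langle y,y\rangle\leqq 0$, contradicting $\langle y,y\rangle=1$. Hence $M_{+,3}=\emptyset$.

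It remains to show $M_{+}=M_{+,1}\sqcup M_{+,2}$ decomposes into two path-connected, clopen pieces. The disjoint union is Remark \ref{rema:53}. Both pieces are non-empty: in both cases $s_{1}=s_{2}<l$ (since $s_{1}=l-m<l$ in \ref{case:a}, and $s_{1}=m-r<2(m-r)=l$ in \ref{case:b}), so $E_{\pm}(P)\cap S^{2l-1}_{s}$ are non-empty, which by Lemma \ref{lemm:56} supplies points of $M_{+,1}$ and $M_{+,2}$. Path-connectedness then follows from Lemma \ref{lemm:58}. Finally, because $M_{+,3}=\emptyset$, the continuous function $z\mapsto\langle z_{+},z_{+}\rangle$ on $M_{+}$ avoids the open interval $(0,1)$, exhibiting $M_{+,1}=\{z:\langle z_{+},z_{+}\rangle\geqq 1\}$ and $M_{+,2}=\{z:\langle z_{+},z_{+}\rangle\leqq 0\}$ as open (and hence clopen) subsets of $M_{+}$. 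They are therefore its two connected components, and $M_{+}$ is disconnected. The principal obstacle is the signature count for $W^{\perp}\cap E_{-}(P)$; once the spacelike dimension of $E_{-}(P)$ is pinned down by Lemma \ref{lemm:52}\ref{lemm:52-1}, everything else is bookkeeping.
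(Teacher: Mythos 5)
Your proof is correct and follows essentially the same route as the paper: reduce to $M_{+,3}=\emptyset$ via Lemma \ref{lemm:60} and a count of spacelike dimensions in $E_{-}(P)$ (your signature count on $W^{\perp}\cap E_{-}(P)$ is a rephrasing of the paper's observation that $\{P_{j}x\}_{j=r+1}^{m}\cup\{y\}$ would be an impossibly large pseudo-orthonormal spacelike system), then use continuity of $z\mapsto\langle z_{+},z_{+}\rangle$ to separate $M_{+,1}$ and $M_{+,2}$ (your clopen decomposition is equivalent to the paper's intermediate-value contradiction), and invoke Lemma \ref{lemm:58} for path-connectedness of the pieces.
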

\begin{proof}
    We suppose that $M_{+, 3}\neq \emptyset$.
    By Lemma \ref{lemm:60}, there exist $x\in E_{+}(P)\cap S^{2l-1}_{s}$ and $y\in E_{-}(P)\cap S^{2l-1}_{s}$ such that $\langle P_{j}x, y\rangle=0$ holds for all $j\in \{1, \ldots, m\}$.
    By Lemma \ref{lemm:17-4} and \S\ref{sect:C-S}\ref{metric}, $\{P_{j}x\}_{j=r+1}^{m}\cup \{y\}$ is a pseudo-orthonormal system of $E_{-}(P)$.
    We note that $\langle P_{j}x, P_{j}x\rangle=1\ (j\in \{r+1, \ldots, m\})$ and $\langle y, y\rangle=1$.
    In \ref{case:a}, we have $m+1\leqq l-s/2=m$, which is a contradiction.
    In \ref{case:b}, we have $m-r+1\leqq l/2=m-r$, which is a contradiction.
    Therefore, $M_{+, 3}=\emptyset$ holds.
    By Remark \ref{rema:51}, there exists $x\in E_{\varepsilon}(P)\cap S^{2l-1}_{s}$ ($\varepsilon=1$ or $\varepsilon=-1$).
    We note that $r\neq m$ in \ref{case:a} and \ref{case:b}.
    By Lemma \ref{lemm:17-4} and \S\ref{sect:C-S}\ref{metric}, we have $P_{m}x\in E_{-\varepsilon}(P)\cap S^{2l-1}_{s}$.
    By Remark \ref{rema:53} and Lemma \ref{lemm:56}, we have
    \begin{equation}
        M_{+}
        =
        M_{+, 1}\sqcup M_{+, 2}
        ,\quad
        M_{+, 1}\neq \emptyset
        ,\quad
        M_{+, 2}\neq \emptyset
        \label{eq:81}
    \end{equation}

    We suppose that $M_{+}$ is path-connected.
    By \eqref{eq:81}, we take arbitrary $z_{1}\in M_{+, 1}$ and $z_{2}\in M_{+, 2}$.
    Since $M_{+}$ is path-connected, there exists a path $\alpha\colon[0, 1]\to M_{+}\subset \mathbb{R}^{2l}_{s}$ from $z_{1}$ to $z_{2}$.
    The mapping $\mathcal{P}_{1}\colon \mathbb{R}^{2l}_{s}\to E_{+}(P)$ is given by $\mathcal{P}_{1}:=(I+P)/2$, which is a projection.
    Since $\alpha$ is continuous, the mapping $\mathcal{P}_{1}\circ \alpha\colon [0, 1]\to E_{+}(P)$ is continuous.
    The function $g\colon [0, 1]\to \mathbb{R}$ is given by $g(t):=\langle\mathcal{P}_{1}\circ \alpha(t), \mathcal{P}_{1}\circ \alpha(t)\rangle$, which is continuous on $[0, 1]$.
    On the other hand, $z_{1}$ and $z_{2}$ can be uniquely expressed as $z_{1}=z_{1, +}+z_{1, -}$ and $z_{2}=z_{2, +}+z_{2, -}\ (z_{1, +}, z_{2, +}\in E_{+}(P),\ z_{1, -}, z_{2, -}\in E_{-}(P))$.
    By Lemma \ref{lemm:54}, we have 
    \begin{equation*}
        g(0)=\langle z_{1, +}, z_{1, +}\rangle\geqq 1
        ,\quad
        g(1)=\langle z_{2, +}, z_{2, +}\rangle\leqq 0
        .
    \end{equation*}
    By the intermediate value theorem, there exists $t_{\mathrm{mid}}\in (0, 1)$ such that $g(t_{\mathrm{mid}})=1/2$.
    Then $\alpha(t_{\mathrm{mid}})\in M_{+, 3}$.
    This is a contradiction by \eqref{eq:81}.
    Therefore, $M_{+}$ is not path-connected.
    Since $M_{+}$ is a manifold, $M_{+}$ is disconnected.
    By Lemma \ref{lemm:58} and \eqref{eq:81}, all of the connected components of $M_{+}$ are $M_{+, 1}$ and $M_{+, 2}$.
\end{proof}
\begin{proposition}
    \label{prop:52}
    In \ref{case:c-1}, then $M_{+}=M_{+, 1}$, which is connected.
    In \ref{case:c-2}, then $M_{+}=M_{+, 2}$, which is connected.
\end{proposition}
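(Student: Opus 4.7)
The plan is to show, in each case, that $M_{+}$ collapses onto one of $M_{+,1}$ or $M_{+,2}$ by a direct signature argument, and then to invoke Lemma~\ref{lemm:58} for connectedness.

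First I would handle case~\ref{case:c-1}. Since $s_{1}=0$, the subspace $E_{+}(P)$ is positive definite; since $s_{2}=l$, the subspace $E_{-}(P)$ is negative definite. For any $z=z_{+}+z_{-}\in M_{+}$, Lemma~\ref{lemm:54} gives $\langle z_{+},z_{+}\rangle+\langle z_{-},z_{-}\rangle=1$. Because $\langle z_{-},z_{-}\rangle\leqq 0$, this forces $\langle z_{+},z_{+}\rangle\geqq 1$, so $z\in M_{+,1}$ by definition. Combined with Remark~\ref{rema:53}, this yields $M_{+}=M_{+,1}$ and $M_{+,2}=M_{+,3}=\emptyset$. (In particular $M_{+,2}$ cannot occur, since $\langle z_{+},z_{+}\rangle\leqq 0$ together with positive-definiteness of $E_{+}(P)$ would force $z_{+}=0$ and then $\langle z_{-},z_{-}\rangle=1$, contradicting negative-definiteness of $E_{-}(P)$.) To apply Lemma~\ref{lemm:58} I must verify $M_{+,1}\neq\emptyset$: since $E_{+}(P)$ is a positive definite $l$-dimensional subspace and $l\geqq m\geqq 4$ by Remark~\ref{rema:lm}, the set $E_{+}(P)\cap S^{2l-1}_{s}$ is a standard $(l-1)$-sphere, which is non-empty, and Lemma~\ref{lemm:56} then gives $M_{+,1}\supset E_{+}(P)\cap S^{2l-1}_{s}\neq\emptyset$. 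Finally, Lemma~\ref{lemm:58} with $\varepsilon=1$ shows that $M_{+,1}$ is path-connected, hence connected.

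Case~\ref{case:c-2} is strictly symmetric to case~\ref{case:c-1}: the roles of $E_{+}(P)$ and $E_{-}(P)$ are interchanged by the sign conditions $(s_{1},s_{2})=(l,0)$, so $E_{+}(P)$ is negative definite and $E_{-}(P)$ is positive definite. The same computation now yields $\langle z_{+},z_{+}\rangle\leqq 0$ for every $z\in M_{+}$, placing $z$ in $M_{+,2}$, and an appeal to Lemma~\ref{lemm:56} and Lemma~\ref{lemm:58} with $\varepsilon=-1$ completes the proof.

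I do not foresee any substantive obstacle; the argument is essentially a book-keeping exercise using the definitions of $M_{+,1},M_{+,2},M_{+,3}$ together with Lemma~\ref{lemm:54}, and the machinery assembled in Lemmas~\ref{lemm:56} and \ref{lemm:58} does all the real work. The only point requiring minor care is matching the two cases \ref{case:c-1}, \ref{case:c-2} with the correct choice of $\varepsilon=\pm 1$ when invoking Lemma~\ref{lemm:58}.
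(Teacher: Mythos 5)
Your proof is correct but takes a genuinely different route from the paper's. The paper establishes $M_{+,2}=M_{+,3}=\emptyset$ by \emph{contradiction}: it first notes that $s_{2}=l$ forces $E_{-}(P)\cap S^{2l-1}_{s}=\emptyset$, and then appeals to Lemma~\ref{lemm:58} (if $M_{+,2}\neq\emptyset$ then $E_{-}(P)\cap S^{2l-1}_{s}\neq\emptyset$) and to the path construction of Lemma~\ref{lemm:60} (if $M_{+,3}\neq\emptyset$ then some $y\in E_{-}(P)\cap S^{2l-1}_{s}$ exists), each of which contradicts the emptiness of $E_{-}(P)\cap S^{2l-1}_{s}$. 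You instead observe directly from Lemma~\ref{lemm:54} and the definiteness of $E_{\pm}(P)$ that $\langle z_{-},z_{-}\rangle\leqq 0$ for every $z\in M_{+}$, hence $\langle z_{+},z_{+}\rangle\geqq 1$, so every point of $M_{+}$ already lies in $M_{+,1}$. This is a cleaner, more elementary derivation of $M_{+}=M_{+,1}$ that bypasses Lemma~\ref{lemm:60} entirely; both approaches then invoke Lemma~\ref{lemm:58} for path-connectedness. (The parenthetical remark about $M_{+,2}$ in your argument is redundant, since $\langle z_{+},z_{+}\rangle\geqq 1$ already excludes it, but it is harmless.) The only thing the paper's longer route ``buys'' is uniformity with the style of Proposition~\ref{prop:51}, where the contradiction machinery is genuinely needed; for the pure Proposition~\ref{prop:52} the direct argument you give is preferable.
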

\begin{proof}
    We consider the case of \ref{case:c-1}.
    Since $s_{2}=l$, we have $E_{-}(P)\cap S^{2l-1}_{s}=\emptyset$.
    We suppose that $M_{+, 2}\neq\emptyset$ or $M_{+, 3}\neq\emptyset$ holds.
    By Lemma \ref{lemm:58}, if $M_{+, 2}\neq\emptyset$, then $E_{-}(P)\cap S^{2l-1}_{s}$ is non-empty, which is a contradiction.
    By Lemma \ref{lemm:60}, if $M_{+, 3}\neq\emptyset$, then there exists an element of $E_{-}(P)\cap S^{2l-1}_{s}$, which is a contradiction.
    Hence, $M_{+,2}=\emptyset$ and $M_{+, 3}=\emptyset$ hold.
    By Remark \ref{rema:53}, $M_{+}=M_{+, 1}$ holds.
    By Lemma \ref{lemm:58}, $M_{+}$ is path-connected, which is connected.

    The proof of the case of \ref{case:c-2} is similar to the above one of the case of \ref{case:c-1}.
\end{proof}
\begin{proposition}
    \label{prop:53}
    In \ref{case:d}, $M_{+}$ is connected.
    Then $M_{+, 1}$, $M_{+, 2}$ and $M_{+, 3}$ are non-empty.
\end{proposition}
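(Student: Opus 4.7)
\textbf{Proof plan for Proposition \ref{prop:53}.}

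My plan is to reduce the entire proposition to the single claim that $M_{+, 3} \neq \emptyset$. Once that is proven, Lemma \ref{lemm:60} applied to any $z \in M_{+, 3}$ produces a path in $M_{+}$ from a point $x \in E_{+}(P) \cap S^{2l-1}_{s} \subset M_{+, 1}$ to a point $y \in E_{-}(P) \cap S^{2l-1}_{s} \subset M_{+, 2}$ through $z$; hence all three strata are non-empty. Combining this with Lemma \ref{lemm:58}, which then guarantees path-connectedness of $M_{+, 1}$ and $M_{+, 2}$, and the fact that every point of $M_{+, 3}$ is joined in $M_{+}$ to a point of $M_{+, 1}$ by Lemma \ref{lemm:60}, we conclude that $M_{+}$ is path-connected, hence connected.

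To exhibit a point of $M_{+, 3}$ in each of the sub-cases \ref{case:d-1}, \ref{case:d-2}, \ref{case:d-3}, I will construct $x \in E_{+}(P) \cap S^{2l-1}_{s}$ and $y \in E_{-}(P) \cap S^{2l-1}_{s}$ with $\langle P_{j}x, y\rangle = 0$ for every $j \in \{1, \ldots, m\}$; then $z := (\cos t)x + (\sin t)y$ will lie in $M_{+, 3}$ for any $t \in (0, \pi/2)$ by Lemma \ref{lemm:54}. For the construction I first pick any positive-norm unit vector $x \in E_{+}(P) \cap S^{2l-1}_{s}$; such an $x$ exists because $s_{1} < l$ in every sub-case of (d) by Lemma \ref{lemm:52}. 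By Lemma \ref{lemm:17-4} together with \S\ref{sect:C-S}\ref{metric}, the family $\{P_{j}x\}_{j=1}^{m}$ is a pseudo-orthonormal system inside $E_{-}(P)$ whose span $V_{x}$ is non-degenerate of dimension $m$ and index $r$. I then look for $y$ in the orthogonal complement of $V_{x}$ inside $E_{-}(P)$ carrying a positive-norm direction, and normalize.

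The dimension count is the core calculation. Since $V_{x} \subset E_{-}(P)$ is non-degenerate of dimension $m$ with index $r$, its orthogonal complement inside $E_{-}(P)$ (which has dimension $l$ and index $s_{2}$) contains a positive-norm vector precisely when its positive dimension $l - s_{2} - (m - r)$ is strictly positive. The strict inequality separating case (d) from cases (a), (b), (c) supplies exactly this in every sub-case:
\[
    l - s_{2} - (m - r)
    =
    \begin{cases}
        l - s/2 - m > 0, & \text{in \ref{case:d-1} by hypothesis,}\\
        l/2 - (m - r) > 0, & \text{in \ref{case:d-2} by hypothesis (with $s_{2} = l/2$),}\\
        l - s_{2} \geqq m > 0, & \text{in \ref{case:d-3} by Lemma \ref{lemm:52}\ref{lemm:52-4}.}
    \end{cases}
\]
A positive-norm $y$ therefore exists, and after normalization satisfies $y \in E_{-}(P) \cap S^{2l-1}_{s}$ with $\langle P_{j}x, y\rangle = 0$, proving $M_{+, 3} \neq \emptyset$.

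The main obstacle is this case-by-case dimension count inside $V_{x}^{\perp} \cap E_{-}(P)$, with sub-case \ref{case:d-3} being the most delicate because $s_{1}$ and $s_{2}$ are not forced to coincide there. The bound $s_{2} \leqq l - m$ from Lemma \ref{lemm:52}\ref{lemm:52-4} is exactly sharp enough to give $l - s_{2} \geqq m > 0$; at the boundary $s_{2} = l$ one would instead fall into case \ref{case:c-1} (where $E_{-}(P) \cap S^{2l-1}_{s} = \emptyset$) and the construction above would break down, which is consistent with the phenomena recorded in Propositions \ref{prop:51} and \ref{prop:52}.
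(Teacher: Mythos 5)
Your proof is correct and follows essentially the same route as the paper: construct $x\in E_{+}(P)\cap S^{2l-1}_{s}$ and $y\in E_{-}(P)\cap S^{2l-1}_{s}$ orthogonal to all $P_{j}x$ via a dimension count, form $(\cos t)x+(\sin t)y\in M_{+,3}$, then assemble path-connectedness from Lemmas \ref{lemm:58} and \ref{lemm:60}. The only cosmetic differences are that you fix $\varepsilon=1$ throughout (justified by $s_{1}<l$) where the paper invokes Remark \ref{rema:51} to choose $\varepsilon\in\{\pm 1\}$, and you phrase the numerical inequality as a positive-index count $l-s_{2}-(m-r)>0$ where the paper states the equivalent case-by-case bounds inline.
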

\begin{proof}
    We show that $M_{+, 1}\neq \emptyset$, $M_{+, 2}\neq \emptyset$ and $M_{+, 3}\neq \emptyset$.
    By Remark \ref{rema:51}, there exists $x\in E_{\varepsilon}(P)\cap S^{2l-1}_{s}$ ($\varepsilon=1$ or $\varepsilon=-1$).
    By Lemma \ref{lemm:17-4} and \S\ref{sect:C-S}\ref{metric}, $\{P_{j}x\}_{j=1}^{m}$ is a pseudo-orthonormal system of $E_{-\varepsilon}(P)$.
    Then there exists $y\in E_{-\varepsilon}(P)\cap S^{2l-1}_{s}$ such that $\{P_{j}x\}_{j=1}^{m}\cup \{y\}$ is a pseudo-orthonormal system of $E_{-\varepsilon}(P)$.
    In fact, $m<l-s/2$ holds in \ref{case:d-1}; $m-r<l/2=l-s/2$ holds in \ref{case:d-2}; and $0<4\leq m\leqq \min{\{l-s_{1}, l-s_{2}\}}$ holds in \ref{case:d-3}.
    By Lemma \ref{lemm:56}, $x\in M_{+, (3-\varepsilon)/2}$ and $y\in M_{+, (3+\varepsilon)/2}$ hold.
    By Lemma \ref{lemm:54}, we can verify that $x/\sqrt{2}+y/\sqrt{2}\in M_{+, 3}$.
    
    We fix a point $z_{0}\in M_{+, 3}$.
    By Lemma \ref{lemm:60}, there exists a path $\alpha$ in $M_{+}$ from a point $x_{0}\in E_{+}(P)\cap S^{2l-1}_{s}$ to a point $y_{0}\in E_{-}(P)\cap S^{2l-1}_{s}$ through $z_{0}$.
    We take an arbitrary point $z\in M_{+}$.
    By Remark \ref{rema:53}, one of $z\in M_{+, 1}$, $z\in M_{+, 2}$ or $z\in M_{+, 3}$ holds.
    If $z\in M_{+, 1}$, by Lemma \ref{lemm:58}, there exists a path in $M_{+, 1}\subset M_{+}$ from $z$ to $x_{0}$.
    If $z\in M_{+, 2}$, by Lemma \ref{lemm:58}, there exists a path in $M_{+, 2}\subset M_{+}$ form $z$ to $y_{0}$.
    By using $\alpha$, there exists a path in $M_{+}$ from $z$ to $x_{0}$.
    If $z\in M_{+, 3}$, by Lemma \ref{lemm:60}, there exists a path in $M_{+}$ from $z$ to a point $x\in E_{+}(P)\cap S^{2l-1}_{s}$.
    By Lemmas \ref{lemm:57} and \ref{lemm:56}, there exists a path in $E_{+}(P)\cap S^{2l-1}_{s}\subset M_{+}$ from $x$ to $x_{0}$.
    Therefore, there exists a path in $M_{+}$ from $z$ to $x_{0}$, that is, $M_{+}$ is path-connected, which is connected.
\end{proof}
Propositions \ref{prop:51}, \ref{prop:52} and \ref{prop:53} reveal the connectedness of $M_{+}$.

\subsection{Inhomogeneity of the Focal Variety}
\label{sect:4-3}
By the method in \cite[5.8]{MR624227}, we obtain the following theorem.
\begin{theorem}
    \label{theo:inhomogeneous}
    For a Clifford system $\{P_{i}\}_{i=1}^{m}\ (P_{i}\in \mathrm{Sym}(\mathbb{R}^{2l}_{s}))$ of signature $(m, r)$ satisfying $m\equiv 0\pmod{4}$, $r\equiv 0\pmod{2}$ and $l>m$, each connected component of the focal variety $M_{+}$ is inhomogeneous.
\end{theorem}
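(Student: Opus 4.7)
The plan is to argue by contradiction, adapting the geometric method of Ferus--Karcher--M\"unzner~\cite[5.8]{MR624227}. Suppose a connected component $C \subset M_+$ is homogeneous under a Lie subgroup $K \subset O(s, 2l-s)$. By \S\ref{sect:I-H}\ref{taniseibun}, I may assume $K$ is connected. Fix $x \in C$; by \S\ref{sect:I-H}\ref{tamotsu}, the isotropy $K_x$ acts linearly and pseudo-isometrically on both $T_x M_+$ and $T_x^\perp M_+ = \Sigma x$, and the equivariance of the shape operator forces $K_x$ to permute the family of kernels $\{\ker S_v\}_v$ given explicitly by \eqref{ker}.

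The main step is to extract from the Clifford data a $K_x$-invariant geometric structure that, under the dimension constraint $l > m$, cannot be preserved by a group large enough to act transitively on $C$. Concretely, I would consider the subspace
\[
    V_x := \mathrm{span}\bigl\{Q\,Q_v\,x : Q, Q_v \in \Sigma,\ \langle Q, Q_v \rangle = 0\bigr\} \subset T_x M_+,
\]
which by \eqref{ker} equals $\sum_v \ker S_v$ and is therefore $K_x$-invariant. Using \S\ref{sect:C-S}\ref{change-PQ}, the products $Q Q'$ with $Q, Q' \in \Sigma$ span a subspace of $\mathrm{End}(\mathbb{R}^{2l}_s)$ generated by $I$ together with the $P_i P_j$ ($i < j$), so $\dim V_x$ is bounded by a function of $m$ alone, while $\dim T_x M_+ = 2l - 1 - m$ grows linearly in $l$. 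Refining the count using the involution $P = P_1 \cdots P_m$ and the decomposition $\mathbb{R}^{2l}_s = E_+(P) \oplus E_-(P)$ from \S\ref{sect:4-1} (the hypotheses $m \equiv 0 \pmod{4}$ and $r \equiv 0 \pmod{2}$ ensure via Lemmas~\ref{lemm:17-3}--\ref{lemm:17-4} that $P$ is a symmetric involution anticommuting with each $P_i$), together with $l > m$, one concludes that the orthogonal complement $W_x := V_x^\perp \cap T_x M_+$ is a nontrivial $K_x$-invariant subspace on which every shape operator $S_v$ vanishes.

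Since the second fundamental form of $M_+ \subset S^{2l-1}_s$ vanishes along $W_x$, the exponential image of $W_x$ is a totally geodesic submanifold $\Sigma_x$ of $S^{2l-1}_s$ contained in $M_+$ and passing through $x$. Combined with the connectedness analysis of Propositions~\ref{prop:51}--\ref{prop:53} and the explicit structure of the components of $M_+$ inside the eigenspaces $E_\pm(P)$, $\Sigma_x$ should turn out to be a proper closed $K_x$-invariant subset of $C$. This contradicts the transitivity of $K$: any tangent vector to a $K$-orbit through $x$ must respect the $K_x$-invariant splitting $T_x M_+ = V_x \oplus W_x$, forcing the orbit to stay locally inside $\Sigma_x$ and hence not fill all of $C$. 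The main obstacle is making the dimension count sharp enough that $l > m$ suffices (rather than a quadratic lower bound on $l$), and rigorously identifying $\Sigma_x$ with a concrete totally geodesic subsphere coming from the $E_\pm(P)$-decomposition; both are expected to follow from careful book-keeping with the anticommutation relations of Lemmas~\ref{lemm:17-3}--\ref{lemm:17-4}.
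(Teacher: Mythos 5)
Your proposal correctly identifies the high-level strategy (adapting the FKM method from \cite[5.8]{MR624227} and exploiting the $K$-invariance of structure built from the shape-operator kernels \eqref{ker}), but the central technical step is wrong, and the fallback dimension count cannot reach the hypothesis $l > m$.

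The claim that ``the orthogonal complement $W_x := V_x^\perp \cap T_xM_+$ is a nontrivial $K_x$-invariant subspace on which every shape operator $S_v$ vanishes'' does not hold. You define $V_x = \sum_v \ker S_v$; since each $S_v$ is self-adjoint, $V_x^\perp = \bigcap_v (\ker S_v)^\perp = \bigcap_v \mathrm{Im}(S_v)$, which is where the shape operators are \emph{nonzero}, not where they vanish. The object you actually want is the intersection $\bigcap_v \ker S_v$, and this is what the paper uses. Specifically, the paper considers the set $N_+ \subset L$ of points $x$ where $\bigcap_{w \in T_x^\perp L} \ker S_w$ contains a spacelike vector $v$, $\langle v,v\rangle > 0$. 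This per-point causal condition is preserved by isometries, so if $L$ is homogeneous then $N_+ = L$ once $N_+ \neq \emptyset$ (Proposition \ref{prop:20}). Non-emptiness is shown by taking $x \in E_\pm(P) \cap S^{2l-1}_s$ and $v = P_1P_2 x$ (Lemma \ref{lemm:19}), using $r$ even to ensure $\langle v,v\rangle > 0$. The technical heart is Proposition \ref{prop:21}: $N_+ \subset E_+(P) \cup E_-(P)$. This is proved not by a dimension count but by an intricate Clifford-algebraic induction: starting from the fact that a single spacelike $v$ lies in all $\ker S_{P_ix}$, one iteratively rebuilds a pseudo-orthonormal basis $\{P_i^{(m/2)}\}$ of $\Sigma$ using the generalized Gram--Schmidt process so that $v = P_2^{(m/2)}P_1^{(m/2)}x = \cdots = P_m^{(m/2)}P_{m-1}^{(m/2)}x$, whence $P_1^{(m/2)}\cdots P_m^{(m/2)}x = (-1)^{m/4}x$, and Lemma \ref{lemm:21-2} identifies this product with $\pm P$. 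Finally $l > m$ is used only in Proposition \ref{prop:22} to exhibit a point $x + y \in L$ with $x \in E_+(P)\setminus\{0\}$ and $y \in E_-(P)\setminus\{0\}$, which therefore lies in $L \setminus N_+$; this gives the contradiction.

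Your proposed dimension count $\dim V_x \leq \binom{m}{2}$ versus $\dim T_x M_+ = 2l-1-m$ requires roughly $l \gtrsim m^2/4$ rather than $l > m$, as you anticipate, and the subsequent claims (that the exponential image of a kernel-direction subspace is a totally geodesic submanifold contained in $M_+$, and that the $K$-orbit would be confined to it) are unsupported; transitivity of $K$ on $C$ is a global statement that does not follow from the existence of a $K_x$-invariant splitting of $T_xM_+$. The paper does not use any totally geodesic submanifold of $M_+$; the contradiction is purely set-theoretic, between $N_+ = L$, $N_+ \subset E_+(P)\cup E_-(P)$, and $L \not\subset E_+(P)\cup E_-(P)$.
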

\begin{remark}
    \label{rema:M-}
    In this paper, we do not investigate inhomogeneity of the focal variety $M_{-}$.
    Thus, we do not know whether a (connected) isoparametric hypersurface of OT-FKM-type whose focal variety is $M_{-}$ in the pseudo-sphere is inhomogeneous or not.
    (We refer to Theorem \ref{theo:inhomogeneous-2}.)
\end{remark}
We give a proof of Theorem \ref{theo:inhomogeneous} which is divided into some steps.
We use the settings in \S\ref{sect:4-1} and \S\ref{sect:4-2}.
Let $L\subset M_{+}$ be a connected component of $M_{+}$.
We note that $T_{x}L=T_{x}M_{+}$ and $T^{\bot}_{x}L=T^{\bot}_{x}M_{+}$ hold.
\begin{proposition}
    \label{prop:22}
    If $l>m$, there exist $x\in E_{+}(P)\setminus \{0\}$ and $y\in E_{-}(P)\setminus \{0\}$ such that $x+y\in L$.
\end{proposition}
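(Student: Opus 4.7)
The plan is to show that $L$ cannot be entirely contained in $E_{+}(P)\cup E_{-}(P)$, from which any point $z\in L$ lying outside this union automatically decomposes as $z=z_{+}+z_{-}$ with both summands non-zero; taking $x:=z_{+}$ and $y:=z_{-}$ then gives the claim. The argument is a dimension count combined with the connectedness of $L$.

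First, I would record that, as a connected component of the non-degenerate submanifold $M_{+}$ of codimension $m$ in $S^{2l-1}_{s}$ (Lemma \ref{lemm:56} and \S\ref{sect:F-V}), $L$ is itself a connected smooth manifold with $\dim L=2l-1-m$. The hypothesis $l>m$ is equivalent to the strict inequality $\dim L>l-1$.

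Next, set $A:=L\cap E_{+}(P)$ and $B:=L\cap E_{-}(P)$. These are closed subsets of $L$, and they are disjoint since $E_{+}(P)\cap E_{-}(P)=\{0\}$ by Proposition \ref{prop:18} while $0\notin S^{2l-1}_{s}$. By Proposition \ref{prop:18}, each $E_{\varepsilon}(P)\cap S^{2l-1}_{s}$, when non-empty, is a pseudo-sphere of dimension $l-1$; hence any inclusion $L\subset E_{\varepsilon}(P)$ would force $\dim L\leqq l-1$, contradicting the preceding paragraph.

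To finish, suppose for contradiction that $L=A\cup B$. Since $A$ and $B$ are closed, disjoint, and $L$ is connected, one of them must be empty, so $L$ would coincide with the other and be contained in either $E_{+}(P)$ or $E_{-}(P)$, which the dimension comparison has just excluded. Consequently there exists $z\in L\setminus(A\cup B)$, and its components $x=z_{+}\in E_{+}(P)\setminus\{0\}$ and $y=z_{-}\in E_{-}(P)\setminus\{0\}$ satisfy $x+y=z\in L$. The only point that might require care is the identification of $\dim L$ with $2l-1-m$ rather than with a smaller value, but this is immediate from the submanifold structure of $M_{+}$ recorded in \S\ref{sect:F-V}; once this is in hand, the remainder of the argument is purely topological.
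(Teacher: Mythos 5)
Your argument is correct, and it takes a genuinely different route from the paper's. The paper establishes the claim constructively: it first identifies an $\varepsilon$ with $\emptyset\neq E_{\varepsilon}(P)\cap S^{2l-1}_{s}\subset L$ (which rests on the case analysis of Lemmas~\ref{lemm:56},~\ref{lemm:58} and Propositions~\ref{prop:51},~\ref{prop:52},~\ref{prop:53}), picks $x'\in E_{\varepsilon}(P)\cap S^{2l-1}_{s}$, uses $l>m$ to extend $\{P_{j}x'\}_{j=1}^{m}$ by one more pseudo-orthonormal vector $y'\in E_{-\varepsilon}(P)$, and then writes down $x+y$ explicitly (with normalizing factors $\sqrt{2}$ or $1/\sqrt{2}$ depending on the sign of $\langle y',y'\rangle$ in cases \ref{case:a}--\ref{case:d}) and checks via Lemma~\ref{lemm:54} that it lands in $L$. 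Your proof instead is non-constructive and purely topological: since $L$ is a connected component of the non-degenerate codimension-$m$ submanifold $M_{+}\subset S^{2l-1}_{s}$, one has $\dim L=2l-1-m$, and $l>m$ is exactly $\dim L>l-1$; the sets $L\cap E_{+}(P)$ and $L\cap E_{-}(P)$ are closed and disjoint in $L$, so if $L$ were their union, connectedness would force $L$ into one of the $(l-1)$-dimensional slices $E_{\varepsilon}(P)\cap S^{2l-1}_{s}$, contradicting the dimension count; hence some $z\in L$ has both $z_{+}\neq 0$ and $z_{-}\neq 0$. Your argument is shorter and bypasses the case analysis entirely; the paper's approach produces an explicit witness and reuses machinery already built for Propositions~\ref{prop:51}--\ref{prop:53}, but for this proposition alone your dimension-plus-connectedness argument is cleaner and self-contained.
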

\begin{proof}
    By Remark \ref{rema:51}; Lemmas \ref{lemm:56} and \ref{lemm:58}; and Propositions \ref{prop:51}, \ref{prop:52} and \ref{prop:53}, there exists $\varepsilon\in \{-1, 1\}$ such that $\emptyset \neq E_{\varepsilon}(P)\cap S^{2l-1}_{s}\subset L$ holds.
    We take an arbitrary $x'\in E_{\varepsilon}(P)\cap S^{2l-1}_{s}\subset L$.
    By Lemma \ref{lemm:17-4} and \S\ref{sect:C-S}\ref{metric}, $\{P_{j}x'\}_{j=1}^{m}$ is a pseudo-orthonormal system of $E_{-\varepsilon}(P)$.
    Since $l>m$, there exists $y'\in E_{-\varepsilon}(P)$ such that $\{P_{j}x'\}_{j=1}^{m}\cup \{y'\}$ is a pseudo-orthonormal system of $E_{-\varepsilon}(P)$.

    In \ref{case:a}, \ref{case:b} and \ref{case:c}, we can assume that $\langle y', y'\rangle=-1$.
    In fact, $0<l-m=s/2$ holds in \ref{case:a}; $r=m-l/2<l/2$ holds in \ref{case:b}; and $r=m<l$ holds in \ref{case:c}.
    In \ref{case:d}, we can assume that $\langle y', y'\rangle=1$.
    In fact, $m<l-s/2$ holds in \ref{case:d-1}; $m-r<l/2$ holds in \ref{case:d-2}; and $m\leqq l-s_{1}, l-s_{2}$ holds in \ref{case:d-3}.
    
    In \ref{case:a}, \ref{case:b} and \ref{case:c}, we assume that $\langle y', y'\rangle=-1$.
    Let 
    \begin{equation*}
        x:=\sqrt{2}x'
        \in E_{\varepsilon}(P)\setminus\{0\}
        ,\quad
        y:=y'
        \in E_{-\varepsilon}(P)\setminus\{0\}
        .
    \end{equation*}
    By Lemma \ref{lemm:54}, and Propositions \ref{prop:51} and \ref{prop:52}, we can verify that $x+y\in M_{+, (3-\varepsilon)/2}=L$ holds.

    In \ref{case:d}, we assume that $\langle y', y'\rangle=1$.
    Let 
    \begin{equation*}
        x:=\frac{x'}{\sqrt{2}}
        \in E_{\varepsilon}(P)\setminus\{0\}
        ,\quad
        y:=\frac{y'}{\sqrt{2}}
        \in E_{-\varepsilon}(P)\setminus\{0\}
        .
    \end{equation*}
    By Lemma \ref{lemm:54} and Proposition \ref{prop:53}, we can verify that $x+y\in M_{+, 3}\subsetneqq M_{+}=L$ holds.
\end{proof}
We define
\begin{equation*}
    N_{+}
    :=
    \left\{
        x\in L
        \ \middle|\ 
        \exists 
        v\in 
        \bigcap_{w\in T^{\bot}_{x}L}
        \ker{S_{w}}
        \ \mathrm{s.t}\ 
        \langle v, v\rangle>0
    \right\}
    \subset 
    L
    \subset 
    M_{+}
    .
\end{equation*}
In the following, we investigate $N_{+}$.
\begin{remark}
    \label{rema:Ntorikata}
    Let $\{Q_{i}\}_{i=1}^{m}$ be a pseudo-orthonormal basis of $\Sigma$ and $x\in L$.
    Then we have
    \begin{equation}
        \bigcap_{w\in T^{\bot}_{x}L}\ker{S_{w}}
        =
        \bigcap_{i=1}^{m}\ker{S_{Q_{i}x}}
        .
        \label{eq:43}
    \end{equation}
    In fact, let $v$ be an element of the right-hand side of \eqref{eq:43}.
    We fix $w\in T^{\bot}_{x}L$.
    By \S\ref{sect:F-V}, we can express
    \begin{equation*}
        w
        =
        \sum_{i=1}^{m}c_{(w)}^{i}Q_{i}x
        .
    \end{equation*}
    By the assumption of $v$, we have
    \begin{equation*}
        S_{w}(v)
        =
        \sum_{i=1}^{m}c_{(w)}^{i}
        S_{Q_{i}x}(v)
        =
        0
        .
    \end{equation*}
    Since $w$ is arbitrary, $v$ is an element of the left-hand side of \eqref{eq:43}.
    It is clear that the right-hand side of \eqref{eq:43} contains the left-hand side of \eqref{eq:43}.
    Therefore, for any pseudo-orthonormal basis $\{Q_{i}\}_{i=1}^{m}$ of $\Sigma$, we have
    \begin{equation*}
        N_{+}
        =
        \left\{
            x\in L
            \ \middle|\ 
            \exists 
            v\in 
            \bigcap_{i=1}^{m}
            \ker{S_{Q_{i}x}}
            \ \mathrm{s.t}\ 
            \langle v, v\rangle>0
        \right\}
        .
    \end{equation*}
\end{remark}
We set $R_{i}:=Q_{2i-1, 4}\ (i\in \{1, 2, \ldots, (m-2)/2\})$.
\begin{lemma}
    \label{lemm:19}
    We have $N_{+}\neq \emptyset$.
\end{lemma}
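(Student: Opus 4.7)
The plan is to exhibit a point $x \in L$ such that $v := P_{1}P_{2}x$ witnesses $x \in N_{+}$. By Remark~\ref{rema:Ntorikata}, I need to check that $v \in T_{x}L$, $\langle v, v\rangle > 0$, and $P_{j}v \in \Sigma x$ for every $j \in \{1, \ldots, m\}$. The delicate step is choosing $x$ inside a joint eigenspace of the commuting family $\{R_{i}\}_{i=1}^{(m-2)/2}$, so that relations $R_{i}x = \sigma_{i}x$ force the identities $P_{1}P_{2}P_{j}x \in \Sigma x$.

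First I locate $x$. From the connectedness analysis of \S\ref{sect:4-2} (Lemma~\ref{lemm:56} together with Propositions~\ref{prop:51}, \ref{prop:52} and \ref{prop:53}), at least one of $E_{+}(P) \cap S^{2l-1}_{s}$ or $E_{-}(P) \cap S^{2l-1}_{s}$ is non-empty and contained in $L$; I may assume the former. Each $R_{i}$ is symmetric (Lemma~\ref{lemm:17-2}) and involutive (Lemma~\ref{lemm:17-3}), and because $R_{i}$ anticommutes with exactly four of the factors $P_{1}, \ldots, P_{m}$ of $P$ (Lemma~\ref{lemm:17-4}), an even count of sign changes yields $R_{i}P = PR_{i}$; similarly the $R_{i}$'s pairwise commute. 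Hence $\{R_{i}\}$ is a commuting family of symmetric involutions preserving $E_{+}(P)$, and iterating Proposition~\ref{prop:18} decomposes $E_{+}(P)$ orthogonally into the joint eigenspaces $W_{\sigma} \cap E_{+}(P)$ for $\sigma \in \{\pm 1\}^{(m-2)/2}$. Writing any unit $x_{0} \in E_{+}(P) \cap S^{2l-1}_{s}$ as $x_{0} = \sum_{\sigma}x_{0, \sigma}$ in this decomposition, the orthogonality of the summands gives $\sum_{\sigma}\langle x_{0, \sigma}, x_{0, \sigma}\rangle = \langle x_{0}, x_{0}\rangle = 1$, so some $x_{0, \sigma^{*}}$ has positive norm; after rescaling this produces $x \in L$ with $R_{i}x = \sigma^{*}_{i}x$ for every $i$.

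For this $x$, the antisymmetry of $P_{1}P_{2}$ together with $x \in M_{+}$ yield $\langle v, x\rangle = 0$ and $\langle v, P_{j}x\rangle = 0$ by direct manipulation with $P_{i}P_{j} = -P_{j}P_{i}$ and $\langle P_{j}x, x\rangle = 0$, so $v \in T_{x}L$; and $\langle v, v\rangle = \eta_{11}\eta_{22}\langle x, x\rangle = 1 > 0$, since $r$ even forces $\eta_{11}\eta_{22} = 1$. The cases $P_{1}v = \eta_{11}P_{2}x$ and $P_{2}v = -\eta_{22}P_{1}x$ lie trivially in $\Sigma x$. For $i \geq 3$, I write $i \in \{2k+1, 2k+2\}$ with $k \geq 1$, and telescope $R_{1}R_{2}\cdots R_{k}$ via the identity $(P_{2j-1}P_{2j})^{2} = -\eta_{2j-1,2j-1}\eta_{2j,2j}I$ to obtain an explicit scalar multiple of $P_{1}P_{2}P_{2k+1}P_{2k+2}$. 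Combining with $R_{j}x = \sigma^{*}_{j}x$ for $j = 1, \ldots, k$ then gives $P_{1}P_{2}P_{2k+1}P_{2k+2}x = C_{k}x$ for a nonzero scalar $C_{k}$ encoding the signs $\sigma^{*}_{j}$ and the $\eta_{jj}$, and multiplying by $P_{2k+2}$ or $P_{2k+1}$ on the left expresses $P_{1}P_{2}P_{i}x$ as a nonzero scalar multiple of the other element of $\{P_{2k+1}x, P_{2k+2}x\} \subset \Sigma x$.

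The main obstacle is the sign bookkeeping in this telescoping identity together with keeping all four cases of \S\ref{sect:4-2} uniform; the hypothesis $l > m$ enters only indirectly through the connectedness analysis that ensures the required $x$ is available in $L$.
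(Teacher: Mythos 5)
Your proof is correct and takes essentially the same route as the paper's: both locate a simultaneous eigenvector $x$ of the commuting family $\{R_i\}$ in $E_{\pm}(P)\cap S^{2l-1}_s\subset L$ and show that $v:=P_1P_2x$ witnesses $x\in N_+$ by telescoping the relations $R_ix=\sigma^*_ix$ through $(P_{2j-1}P_{2j})^2=-\eta_{2j-1,2j-1}\eta_{2j,2j}I$. The only organizational difference is that you fix the eigenvalue of $P$ first and decompose $E_{+}(P)$, which lets you absorb what the paper treats as a separate case B (where the eigenvector lands outside $L$ and must be pushed across by $P_m$) into a WLOG.
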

\begin{proof}
    We note that $R_{1}, R_{2}, \ldots, R_{(m-2)/2}$ are pairwise commutative.
    By Proposition \ref{prop:18} and using simultaneous eigenspace decomposition, we have the orthogonal decomposition
    \begin{equation*}
        \mathbb{R}^{2l}_{s}
        =
        \bigoplus_{(\varepsilon_{1}, \ldots, \varepsilon_{(m-2)/2})\in \{-1, 1\}^{(m-2)/2}}
        \left(
            \bigcap_{j=1}^{(m-2)/2}E_{\varepsilon_{j}}(R_{j})
        \right)
        .
    \end{equation*}
    By Remark \ref{rema:51}, there exists $(\varepsilon_{1}, \ldots, \varepsilon_{(m-2)/2})\in \{-1, 1\}^{(m-2)/2}$ such that 
    \begin{equation*} 
        \left(
            \bigcap_{j=1}^{(m-2)/2}E_{\varepsilon_{j}}(R_{j})
        \right)
        \cap S^{2l-1}_{s}
        \neq\emptyset
        .
    \end{equation*}
    We fix ${\displaystyle x\in \left(\bigcap_{j=1}^{(m-2)/2}E_{\varepsilon_{j}}(R_{j})\right)\cap S^{2l-1}_{s}}$.
    We note that we can express $P=R_{1}R_{3}\cdots R_{(m-6)/2}R_{(m-2)/2}$.
    Let $\epsilon:=\varepsilon_{1}\varepsilon_{3}\cdots \varepsilon_{(m-6)/2}\varepsilon_{(m-2)/2}\in \{-1, 1\}$.
    By Lemma \ref{lemm:17-3}, we can verify $x\in E_{\epsilon}(P)\cap S^{2l-1}_{s}$.
    By Lemma \ref{lemm:56}, $x\in M_{+, (3-\epsilon)/2}$ holds.
    We divide into the following cases:
    \begin{enumerate}[label=(\Alph*)]
        \item \label{case:A}
        \ref{case:a} and $x\in L$; \ref{case:b} and $x\in L$; \ref{case:c}; and \ref{case:d}.
        \item \label{case:B}
        \ref{case:a} and $x\notin L$; and \ref{case:b} and $x\notin L$.
    \end{enumerate}
    In \ref{case:A}, $x\in E_{\epsilon}(P)\cap S^{2l-1}_{s}\subset M_{+, (3-\epsilon)/2}\subset L$ holds.
    Let $v:=P_{1}P_{2}x$.
    We can verify that we express 
    \begin{equation*}
        v
        =
        \varepsilon(i, j)P_{i}P_{j}x
        ,\quad 
        (i, j)=(1, 2), (2, 1), (3, 4), (4, 3), \ldots, (m-1, m), (m, m-1)
        ,\quad
        \varepsilon(i, j)\in \{-1, 1\}
        .
    \end{equation*}
    Thus, by \S\ref{sect:F-V}\eqref{ker}, we have
    \begin{equation*}
        v\in 
        \bigcap_{i=1}^{m}
        \ker{S_{P_{i}x}}
        .
    \end{equation*}
    Since $r$ is even, $\eta_{11}=\eta_{22}$ holds.
    Thus, by \S\ref{sect:C-S}\ref{metric}, we have $\langle v, v\rangle=1>0$.
    By Remark \ref{rema:Ntorikata}, we have $x\in N_{+}$.

    We consider the case of \ref{case:B}.
    Then $x\notin E_{-\epsilon}(P)\cap S^{2l-1}_{s}\subset M_{+, (3+\epsilon)/2}=L$ holds.
    Let $x':=P_{m}x$.
    Since $r\neq m$, we can compute
    $
        \langle x', x'\rangle
        =
        1
    $
    by \S\ref{sect:C-S}\ref{metric}.
    By Lemma \ref{lemm:17-4}, we have $x'\in E_{-\epsilon}(P)\cap S^{2l-1}_{s}\subset M_{+, (3+\epsilon)/2}=L$.
    Let $v':=P_{1}P_{2}x'$.
    Similarly, we can verify
    \begin{equation*}
        v'\in 
        \bigcap_{i=1}^{m}
        \ker{S_{P_{i}x'}}
        ,
    \end{equation*}
    and $\langle v', v'\rangle=1$.
    By Remark \ref{rema:Ntorikata}, we have $x'\in N_{+}$.
\end{proof}
To prove Proposition \ref{prop:21}, we use the following Lemma \ref{lemm:21-2}.
\begin{lemma}
    \label{lemm:21-2}
    Let $\{Q_{i}\}_{i=1}^{m}$ be a pseudo-orthonormal basis of $\Sigma$.
    Then either
    $Q_{1}Q_{2}\cdots Q_{m}=P$
    or
    $Q_{1}Q_{2}\cdots Q_{m}=-P$
    holds.
\end{lemma}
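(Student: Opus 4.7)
The plan is to exploit two facts from Section \ref{sect:C-S}: that $\{Q_i\}_{i=1}^m$ and $\{P_i\}_{i=1}^m$ are related by some $A \in O(r, m-r)$ via $Q_i = \sum_{j=1}^m A_{ji} P_j$, and that $\langle Q_i, Q_j \rangle = 0$ for $i \neq j$ forces $Q_i Q_j = -Q_j Q_i$ by property \ref{change-PQ}, with the same anticommutativity holding for the $P_i$'s. Consequently, for any permutation $\sigma \in S_m$,
\begin{equation*}
    Q_{\sigma(1)} Q_{\sigma(2)} \cdots Q_{\sigma(m)} = \mathrm{sgn}(\sigma) \, Q_1 Q_2 \cdots Q_m,
\end{equation*}
and analogously for products of the $P_i$'s. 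The target then reduces to showing $Q_1 Q_2 \cdots Q_m = \det(A) \cdot P$, since ${}^{t}\! A J_{r, m-r} A = J_{r, m-r}$ forces $\det(A)^2 = 1$.

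To establish this identity I would antisymmetrize over $S_m$,
\begin{equation*}
    m! \cdot Q_1 Q_2 \cdots Q_m = \sum_{\sigma \in S_m} \mathrm{sgn}(\sigma) \, Q_{\sigma(1)} \cdots Q_{\sigma(m)},
\end{equation*}
substitute $Q_{\sigma(i)} = \sum_{j} A_{j, \sigma(i)} P_j$, and swap the order of summation. For each fixed multi-index $(j_1, \ldots, j_m)$, the resulting inner sum $\sum_\sigma \mathrm{sgn}(\sigma) A_{j_1, \sigma(1)} \cdots A_{j_m, \sigma(m)}$ is the determinant of the matrix whose $i$-th row is the $j_i$-th row of $A$; this vanishes unless the $j_i$'s are pairwise distinct, i.e.\ unless $j_i = \tau(i)$ for some $\tau \in S_m$, in which case it equals $\mathrm{sgn}(\tau) \det(A)$. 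Collapsing $P_{\tau(1)} \cdots P_{\tau(m)} = \mathrm{sgn}(\tau) \cdot P$ by the anticommutativity of the $P_i$'s, the $\mathrm{sgn}(\tau)^2$ factors reduce to $1$, and summing over $\tau \in S_m$ yields $m! \det(A) \cdot P$; cancelling $m!$ gives the identity.

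The computation is essentially formal once the joint anticommutativity of the two bases is in hand, so I do not expect a real obstacle. The only subtlety worth flagging is recognizing that it is precisely this joint anticommutativity that converts the naive multi-index expansion into a determinant and thereby encodes $\det(A) = \pm 1$ as the only possible sign. One could alternatively argue by induction, decomposing $A \in O(r, m-r)$ into a product of reflections and verifying that each reflection flips the sign of both $\det(A)$ and the ordered product, but the direct antisymmetrization seems cleaner.
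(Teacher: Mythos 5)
Your argument is correct, but it takes a genuinely different route from the paper. The paper proves the lemma by decomposing $A\in O(r,m-r)$ into a finite product of elementary generators (rotations $R_{a,a+1}(t)$, the boost $S_{1,r+1}(t)$, and the two reflections $T_1,T_2$), verifying by direct computation that the first two types leave $Q_1\cdots Q_m=P$ unchanged while each reflection flips the sign, and then invoking the multiplicativity of the sign over composition of changes of basis. Your antisymmetrization argument bypasses the generator decomposition entirely: by expanding $m!\,Q_1\cdots Q_m$ over $S_m$, swapping sums, recognizing the inner sum as the determinant of a row-selected submatrix of $A$ (hence zero unless the row indices form a permutation), and collapsing $P_{\tau(1)}\cdots P_{\tau(m)}=\mathrm{sgn}(\tau)P$, you obtain the sharper identity $Q_1\cdots Q_m=\det(A)\,P$ in one stroke, with $\det(A)=\pm 1$ following from ${}^t\!A J_{r,m-r}A=J_{r,m-r}$. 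What your route buys is conceptual transparency (the $\pm 1$ is visibly $\det(A)$, not an artifact of a case split) and independence from any explicit set of generators for the pseudo-orthogonal group; what the paper's route buys is that it requires no permutation calculus, only two short matrix computations, and it establishes the generator decomposition as a reusable fact. Both are sound; yours is arguably cleaner and more informative.
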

\begin{proof}
    By \S\ref{sect:C-S}, there exists $A\in O(r, m-r)$ such that 
    \begin{equation*}
        \begin{bmatrix}
            Q_{1} & Q_{2} &\cdots &Q_{m}
        \end{bmatrix}
        =
        \begin{bmatrix}
            P_{1} & P_{2} &\cdots &P_{m}
        \end{bmatrix}
        A
        .
    \end{equation*}
    On the other hand, we can verify that any pseudo-orthogonal matrix is represented as a finite product of the following pseudo-orthogonal matrices:
    \begin{equation*}
        R_{a, a+1}(t)
        :=
        \begin{pmatrix}
            1&&&&&&&\\
            &\ddots&&&&&&\\
            &&1&&&&&\\
            &&&\cos{t}&-\sin{t}&&&\\
            &&&\sin{t}&\cos{t}&&&\\
            &&&&&1&&\\
            &&&&&&\ddots&\\
            &&&&&&&1
        \end{pmatrix}
        ,\quad t\in \mathbb{R},
    \end{equation*}
    \begin{equation*}
        S_{1, r+1}(t)
        :=
        \begin{pmatrix}
            \cosh{t}&&&&\sinh{t}&&&\\
            &1&&&&&&\\
            &&\ddots&&&&&\\
            &&&1&&&&&\\
            \sinh{t}&&&&\cosh{t}&&&\\
            &&&&&1&&&\\
            &&&&&&\ddots&\\
            &&&&&&&1
        \end{pmatrix}
        ,\quad t\in \mathbb{R},
    \end{equation*}
    \begin{equation*}
        T_{1}
        :=
        (-J_{r-1, 1})
        \oplus E_{m-r}
        ,\quad
        T_{2}
        :=
        E_{r}\oplus 
        (-J_{m-r-1, 1})
        ,
    \end{equation*}
    where $a\in \{1, \ldots, r-1\}\cup \{r+1, \ldots, m-1\}$; and where two $(\cos{t})$-s are located in $(a, a)$-entry and $(a+1, a+1)$-entry of $R_{a, a+1}(t)$; and where two $(\cosh{t})$-s are located in $(1, 1)$-entry and $(r+1, r+1)$-entry of $S_{1, r+1}(t)$.
    In fact, left-multiplying by finite elements of $\{R_{a, a+1}(t)\}_{a, t}\cup\{S_{1, r+1}(t)\}_{t}$ changes any pseudo-orthogonal matrix into one of $E_{m}$, $T_{1}$, $T_{2}$ or $T_{1}T_{2}$.
    
    Thus, it suffices to show that $Q_{1}Q_{2}\cdots Q_{m}$ equals $P$ or $-P$ in the case where $A$ is $R_{a, a+1}(t)$, $S_{1, r+1}(t)$, $T_{1}$ and $T_{2}$.
    We consider the case where $A=R_{a, a+1}(t)$.
    We have $\eta_{aa}=\eta_{a+1, a+1}$ since $a\neq r$.
    Thus, we can verify that $Q_{1}\cdots Q_{m}=P$ holds by direct computation and \S \ref{sect:C-S}\ref{change-PQ}.
    In the case where $A=S_{1, r+1}$, we can verify that $Q_{1}\cdots Q_{m}=P$ holds similarly.
    In the case where $A$ is $T_{1}$ or $T_{2}$, $Q_{1}\cdots Q_{m}=-P$ holds.
\end{proof}
\begin{proposition}
    \label{prop:21}
    We have $N_{+}\subset E_{+}(P)\cup E_{-}(P)$.
\end{proposition}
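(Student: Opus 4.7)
The plan is to argue by contradiction: suppose $x \in N_+$ with $v \in \bigcap_{i=1}^{m} \ker S_{P_i x}$ satisfying $\langle v, v\rangle > 0$, and suppose toward contradiction that both $x_+ \neq 0$ and $x_- \neq 0$ in $x = x_+ + x_-$, $x_\pm \in E_\pm(P)$.

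First I would reformulate the kernel condition. By $\ker S_{P_i x} = \{QP_i x : Q \in \Sigma, \langle Q, P_i\rangle = 0\}$, for each $i$ there exists $Q^{(i)} \in \Sigma$ with $\langle Q^{(i)}, P_i\rangle = 0$ and $v = Q^{(i)} P_i x$. Since $\langle Q^{(i)}, P_i\rangle = 0$ implies $Q^{(i)} P_i = -P_i Q^{(i)}$, the condition is equivalent to $P_i v \in T^\perp_x M_+ = \Sigma x$ for every $i$. Setting $K := Q^{(1)} P_1$, so that $v = Kx$, a direct Clifford computation yields $K^\ast = -K$ and $K^2 = -\langle v, v\rangle I$. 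Because $m \equiv 0 \pmod 4$, the element $P$ anti-commutes with every $P_j$, so $K$ commutes with $P$ and preserves each $E_\pm(P)$; hence $v_\pm = Kx_\pm$.

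Next I would expand the conditions $P_i Kx \in \Sigma x$ for $i \geq 2$. Writing $Q^{(1)} = \sum_{k \neq 1} a_k P_k$, one obtains
\begin{equation*}
    P_i Kx = a_i \eta_{ii} P_1 x + \sum_{k \neq 1, i} a_k (P_i P_k P_1)x,
\end{equation*}
where the first summand already lies in $\Sigma x$. Each grade-three product $P_i P_k P_1$ (with $1, i, k$ distinct) rewrites as $\pm P_1 P_i P_k$; in the case $m = 4$ it reduces further to $\pm P P_j$ for the remaining index $j$ using $P_1 P_2 P_3 P_4 = P$, and then $(PP_j)x = -P_j x_+ + P_j x_-$ because $Px_\pm = \pm x_\pm$. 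Imposing membership in $\Sigma x = \{Qx_+ + Qx_- : Q \in \Sigma\}$ and projecting onto $E_\pm(P)$ gives two linear systems in the coefficients $a_k$ whose relative sign depends on which side ($x_+$ or $x_-$) is projected.

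The decisive step is to extract a contradiction from this sign mismatch. Under the hypothesis $l > m$, the maps $\Psi_\pm \colon \Sigma \to \mathbb{R}^{2l}_s$, $Q \mapsto Qx_\pm$, have kernels contained in the null cone of $\Sigma$, since $\langle Qx_\pm, Qx_\pm\rangle = \langle Q, Q\rangle \langle x_\pm, x_\pm\rangle$ forces $\langle Q, Q\rangle = 0$ whenever $Qx_\pm = 0$ with $\langle x_\pm, x_\pm\rangle \neq 0$. Combining the two sign-flipped systems then forces the non-null part of $Q^{(1)}$ to vanish, so $\langle Q^{(1)}, Q^{(1)}\rangle = 0$, and hence $\langle v, v\rangle = \eta_{11} \langle Q^{(1)}, Q^{(1)}\rangle = 0$, contradicting $\langle v, v\rangle > 0$. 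The principal obstacle will be to carry out this sign-mismatch argument uniformly: for $m = 4$ the reduction via $P$ is immediate, but for $m \geq 8$ the grade-three products do not collapse to grade one, and one must simultaneously exploit the conditions for every $i \in \{2, \ldots, m\}$, carefully accounting for the $\eta_{ii}$-signs arising from the signature $(r, m-r)$ and for the possibly null behaviour of $x_\pm$.
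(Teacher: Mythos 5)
Your approach is genuinely different from the paper's, but it contains a real gap that you yourself flag, and that gap is the whole proof.

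The paper does \emph{not} argue by contradiction and does \emph{not} try to expand the conditions $P_i Kx \in \Sigma x$ in terms of fixed coefficients $a_k$ with respect to the original basis $\{P_i\}$. Instead it works constructively: starting from $v = Q_1 P_1 x$ with $\langle Q_1, P_1\rangle = 0$ and $\langle Q_1, Q_1\rangle = \eta_{11}$, it applies the generalized Gram--Schmidt process to replace $P_2$ (and then iteratively $P_4, P_6, \ldots$) by orthonormalized copies of the $Q_i$-s, producing after $m/2$ steps a \emph{new} pseudo-orthonormal basis $\{P^{(m/2)}_i\}$ of $\Sigma$ in which $v = P^{(m/2)}_{2j}P^{(m/2)}_{2j-1}x = -P^{(m/2)}_{2j-1}P^{(m/2)}_{2j}x$ holds for every $j$. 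That chain of equalities immediately gives $P^{(m/2)}_1 \cdots P^{(m/2)}_m x = (-1)^{m/4}x$, so $x$ is a $(\pm 1)$-eigenvector of that product. The payoff is Lemma \ref{lemm:21-2}: the ordered product of \emph{any} pseudo-orthonormal basis of $\Sigma$ equals $\pm P$, so $x \in E_+(P)\cup E_-(P)$. The change of basis is precisely what avoids the combinatorial explosion of grade-three Clifford products that you run into.

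The gap in your proposal is concrete and is not a matter of polish. For $m \geq 8$ the products $P_i P_k P_1$ ($1, i, k$ distinct) do not collapse to elements of $\Sigma \cup \Sigma\cdot P$, so ``projecting onto $E_\pm(P)$'' does not give you two linear systems purely in the $a_k$: the resulting vectors live in subspaces that you have not described, and you cannot conclude the sign mismatch without additional structure. You acknowledge this (``the principal obstacle\ldots for $m \geq 8$ the grade-three products do not collapse to grade one'') but offer no mechanism to close it. There is also a secondary, acknowledged but unresolved issue: $x_\pm$ may be null (for $z \in M_{+,1}$ one can have $\langle z_-, z_-\rangle = 0$, cf. the proof of Lemma \ref{lemm:58}), so your claim that $\ker \Psi_\pm$ lies in the null cone of $\Sigma$ requires case analysis you have not supplied; note also that the paper's proof of this proposition does not invoke $l > m$ at all, which is only used in Proposition \ref{prop:22}. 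As written, your argument establishes the result (modulo the null-vector caveat) only for $m = 4$, and the inductive/Gram--Schmidt mechanism of the paper is exactly the missing ingredient that makes the statement provable uniformly in $m$.
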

\begin{proof}
    We take an arbitrary $x\in N_{+}$.
    Then there exists 
    \begin{equation*}
        v
        \in 
        \bigcap_{i=1}^{m}
        \ker{S_{P_{i}x}}
        ,
    \end{equation*}
    such that $\langle v, v\rangle=1>0$.
    By Remark \ref{rema:Ntorikata}, we can express
    \begin{equation*}
        v
        =
        Q_{1}P_{1}x
        =
        Q_{2}P_{2}x
        =
        \cdots
        =
        Q_{m}P_{m}x
        ,
    \end{equation*}
    where, for any $i\in \{1, \ldots, m\}$, there exists $Q_{i}\in \Sigma$ such that $\langle P_{i}, Q_{i}\rangle=0$.
    Since we have
    \begin{equation*}
        1
        =
        \langle v, v\rangle
        =
        \langle Q_{1}P_{1}x, Q_{1}P_{1}x\rangle
        =
        \langle Q_{1}, Q_{1}\rangle
        \langle P_{1}, P_{1}\rangle
        \langle x, x\rangle
        =
        \eta_{11}
        \langle Q_{1}, Q_{1}\rangle
        ,
    \end{equation*}
    by \S\ref{sect:C-S}\ref{metric}, $\langle Q_{1}, Q_{1}\rangle=\eta_{11}$ holds.
    Since $r$ is even, $\eta_{11}=\eta_{22}$ holds.
    We can take $\omega_{1}\in \{2, 3, \ldots, m\}$ satisfying $\langle Q_{1}, P_{\omega_{1}}\rangle\neq 0$.
    Then $P_{1}, Q_{1}, \{P_{i}\}_{i\in \{2, 3, \ldots, m\}\setminus\{\omega_{1}\}}$ is a basis of $\Sigma$.
    By using the generalized Gram-Schmidt process, we can change $P_{1}, Q_{1}, \{P_{i}\}_{i\in \{2, 3, \ldots, m\}\setminus\{\omega_{1}\}}$ into a pseudo-orthonormal basis $\{P^{(1)}_{i}\}_{i=1}^{m}$ of $\Sigma$ satisfying $P^{(1)}_{1}=P_{1}, P^{(1)}_{2}=Q_{1}$.
    (We refer to \cite[Chapter 1, PROPOSITION 1.2, PROOF]{MR1383318} to understand the generalized Gram-Schmidt process.)
    By Remark \ref{rema:Ntorikata}, we can express
    \begin{equation*}
        v
        =
        P^{(1)}_{2}P^{(1)}_{1}x
        =
        -P^{(1)}_{1}P^{(1)}_{2}x
        =
        Q^{(1)}_{3}P^{(1)}_{3}x
        =
        Q^{(1)}_{4}P^{(1)}_{4}x
        =
        \cdots
        =
        Q^{(1)}_{m}P^{(1)}_{m}x
        ,
    \end{equation*}
    where, for any $i\in \{3, \ldots, m\}$, there exists $Q^{(1)}_{i}\in \Sigma$ such that $\langle P^{(1)}_{i}, Q^{(1)}_{i}\rangle=0$.

    We fix $b\in \{1, 2, \ldots, (m-2)/2\}$.
    We assume that there exists a pseudo-orthonormal basis $\{P^{(b)}_{i}\}_{i=1}^{m}$ of $\Sigma$ such that we can express
    \begin{align*}
        v
        &=
        P^{(b)}_{2}P^{(b)}_{1}x
        =
        -
        P^{(b)}_{1}P^{(b)}_{2}x
        =
        \cdots
        =
        P^{(b)}_{2b}P^{(b)}_{2b-1}x
        =
        -
        P^{(b)}_{2b-1}P^{(b)}_{2b}x\\
        &=
        Q^{(b)}_{2b+1}P^{(b)}_{2b+1}x
        =
        Q^{(b)}_{2b+2}P^{(b)}_{2b+2}x
        =
        \cdots
        =
        Q^{(b)}_{m}P^{(b)}_{m}x
        ,
    \end{align*}
    where, for any $i\in \{2b+1, 2b+2, \ldots, m\}$, there exists $Q^{(b)}_{i}\in \Sigma$ such that $\langle P^{(b)}_{i}, Q^{(b)}_{i}\rangle=0$.
    For $a\in \{1, 2, \ldots, 2b\}$, by \S\ref{sect:C-S}\ref{metric}, we have
    \begin{align*}
        0
        &=
        (-1)^{a+1}
        \langle P^{(b)}_{2b+1}, P^{(b)}_{a+(-1)^{a+1}}\rangle
        \langle P^{(b)}_{a}, P^{(b)}_{a}\rangle
        \langle x, x\rangle
        =
        (-1)^{a+1}
        \langle P^{(b)}_{2b+1}P^{(b)}_{a}x, P^{(b)}_{a+(-1)^{a+1}}P^{(b)}_{a}x\rangle\\
        &=
        \langle P^{(b)}_{2b+1}P^{(b)}_{a}x, Q^{(b)}_{2b+1}P^{(b)}_{2b+1}x\rangle
        =
        -
        \langle P^{(b)}_{2b+1}P^{(b)}_{a}x, P^{(b)}_{2b+1}Q^{(b)}_{2b+1}x\rangle\\
        &=
        -
        \langle P^{(b)}_{2b+1}, P^{(b)}_{2b+1}\rangle
        \langle P^{(b)}_{a}, Q^{(b)}_{2b+1}\rangle
        \langle x, x\rangle
        =
        -
        \eta_{2b+1, 2b+1}\langle P^{(b)}_{a}, Q^{(b)}_{2b+1}\rangle
        .
    \end{align*}
    Thus, we have $\langle P^{(b)}_{a}, Q^{(b)}_{2b+1}\rangle=0$ for any $a\in \{1, 2, \ldots, 2b, 2b+1\}$.
    Since we have
    \begin{align*}
        1
        =
        \langle v, v\rangle
        =
        \langle Q^{(b)}_{2b+1}P^{(b)}_{2b+1}x, Q^{(b)}_{2b+1}P^{(b)}_{2b+1}x\rangle
        =
        \eta_{2b+1,2b+1}
        \langle Q^{(b)}_{2b+1}, Q^{(b)}_{2b+1}\rangle
        ,
    \end{align*}
    by \S\ref{sect:C-S}\ref{metric}, $\langle Q^{(b)}_{2b+1}, Q^{(b)}_{2b+1}\rangle=\eta_{2b+1, 2b+1}$ holds.
    Since $r$ is even, $\eta_{2b+1, 2b+1}=\eta_{2b+2, 2b+2}$.
    We can take $\omega_{b}\in \{2b+2, 2b+3, \ldots, m\}$ satisfying $\langle Q^{(b)}_{2b+1}, P_{\omega_{b}}\rangle\neq 0$.
    Then $P^{(b)}_{1}, \ldots, P^{(b)}_{2b}, P^{(b)}_{2b+1}, Q^{(b)}_{2b+1}$,\! $\{P_{i}\}_{i\in \{2b+2, 2b+3, \ldots, m\}\setminus \{\omega_{b}\}}$ is a basis of $\Sigma$.
    By using the generalized Gram-Schmidt process, we can change $P^{(b)}_{1}, \ldots, P^{(b)}_{2b}, P^{(b)}_{2b+1}, Q^{(b)}_{2b+1}$,\! $\{P_{i}\}_{i\in \{2b+2, 2b+3, \ldots, m\}\setminus \{\omega_{b}\}}$ into a pseudo-orthonormal basis $\{P^{(b+1)}_{i}\}_{i=1}^{m}$ of $\Sigma$ such that $P^{(b+1)}_{i}=P^{(b)}_{i}$ for any $(i\in \{1, 2, \ldots, 2b+1\})$ and $P^{(b+1)}_{2b+2}=Q^{(b)}_{2b+1}$ hold.
    By Remark \ref{rema:Ntorikata}, we can express
    \begin{align*}
        v
        &=
        P^{(b+1)}_{2}P^{(b+1)}_{1}x
        =
        -
        P^{(b+1)}_{1}P^{(b+1)}_{2}x
        =
        \cdots
        =
        P^{(b+1)}_{2b+2}P^{(b+1)}_{2b+1}x
        =
        -
        P^{(b+1)}_{2b+1}P^{(b+1)}_{2b+2}x\\
        &=
        Q^{(b+1)}_{2b+3}P^{(b+1)}_{2b+3}x
        =
        Q^{(b+1)}_{2b+4}P^{(b+1)}_{2b+4}x
        =
        \cdots
        =
        Q^{(b+1)}_{m}P^{(b+1)}_{m}x
        ,
    \end{align*}
    where, for any $i\in \{2b+3, 2b+4, \ldots, m\}$, there exists $Q^{(b+1)}_{i}\in \Sigma$ such that $\langle P^{(b+1)}_{i}, Q^{(b+1)}_{i}\rangle=0$. 

    By repetition of the above procedure for $b=1, 2, \ldots, (m-2)/2$, we obtain a pseudo-orthonormal basis $\{P^{(m/2)}_{i}\}_{i=1}^{m}$ of $\Sigma$ and we can express
    \begin{align}
        v
        &=
        P^{(m/2)}_{2}P^{(m/2)}_{1}x
        =
        -
        P^{(m/2)}_{1}P^{(m/2)}_{2}x
        =
        P^{(m/2)}_{4}P^{(m/2)}_{3}x
        =
        -
        P^{(m/2)}_{3}P^{(m/2)}_{4}x\notag\\
        &=
        \cdots
        =
        P^{(m/2)}_{m}P^{(m/2)}_{m-1}x
        =
        -
        P^{(m/2)}_{m-1}P^{(m/2)}_{m}x
        .\label{eq:11}
    \end{align}
    By \eqref{eq:11}, we can verify $P^{(m/2)}_{1}P^{(m/2)}_{2}\cdots P^{(m/2)}_{m-1}P^{(m/2)}_{m}x=(-1)^{m/4}x$.
    Thus, we have
    \begin{equation*}
        x\in E_{+}(P^{(m/2)}_{1}P^{(m/2)}_{2}\cdots P^{(m/2)}_{m})\cup E_{-}(P^{(m/2)}_{1}P^{(m/2)}_{2}\cdots P^{(m/2)}_{m})
        .
    \end{equation*}
    We note that $E_{+}(-P)=E_{-}(P)$ and $E_{-}(-P)=E_{+}(P)$.
    By Lemma \ref{lemm:21-2}, we have $x\in E_{+}(P)\cup E_{-}(P)$.
    Since $x$ is arbitrary, we have $N_{+}\subset E_{+}(P)\cup E_{-}(P)$.
\end{proof}
If we suppose that $L$ is homogeneous, then we find that the subset $N_{+}\subset L$ covers $L$.
\begin{proposition}
    \label{prop:20}
    If $L$ is homogeneous, then we have $N_{+}=L$.
\end{proposition}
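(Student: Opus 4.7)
The plan is to exploit the $K$-invariance of $N_{+}$ under the transitive action and the non-emptiness guaranteed by Lemma \ref{lemm:19}. Since $L$ is assumed homogeneous, there is a Lie subgroup $K\subset O(s, 2l-s)$ acting transitively on $L$; it then suffices to show that $N_{+}$ is $K$-invariant, because the $K$-orbit of any point in $N_{+}$ fills out $L$.

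First I would verify the naturality of the shape operator under isometries in this pseudo-Riemannian setting. Fix $k\in K$ and $x\in L$. Since $k$ is an isometry of $S^{2l-1}_{s}$, its differential commutes with the Levi-Civita connection $\bar{\nabla}$, and by \S\ref{sect:I-H}\ref{tamotsu} it sends $T_{x}L$ to $T_{k(x)}L$ and $T^{\bot}_{x}L$ to $T^{\bot}_{k(x)}L$. For any normal vector field $w$ on $L$, pushing it forward to $k_{*}w$ and writing out $S_{k_{*}w}\circ (dk)_{x}$ from the definition $S_{w}(X)=-(\bar{\nabla}_{X}w)^{\top}$ yields the standard intertwining relation
\begin{equation*}
    S_{(dk)_{x}(w)}\circ (dk)_{x}=(dk)_{x}\circ S_{w}.
\end{equation*}
Consequently $(dk)_{x}$ maps $\ker{S_{w}}$ bijectively onto $\ker{S_{(dk)_{x}(w)}}$.

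Next I would observe that as $w$ ranges over $T^{\bot}_{x}L$, $(dk)_{x}(w)$ ranges over all of $T^{\bot}_{k(x)}L$. Taking the intersection over all such $w$, $(dk)_{x}$ therefore maps
\begin{equation*}
    \bigcap_{w\in T^{\bot}_{x}L}\ker{S_{w}}
    \xrightarrow{\;\cong\;}
    \bigcap_{w'\in T^{\bot}_{k(x)}L}\ker{S_{w'}}
\end{equation*}
isomorphically. Since $k$ is an isometry, $(dk)_{x}$ preserves the pseudo-inner product, so the sign of $\langle v, v\rangle$ is preserved. Hence if $x\in N_{+}$ with witness vector $v$, then $k(x)\in N_{+}$ with witness vector $(dk)_{x}(v)$. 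This shows $k(N_{+})\subset N_{+}$ for every $k\in K$, i.e.\ $N_{+}$ is $K$-invariant.

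Finally, by Lemma \ref{lemm:19} we have $N_{+}\neq \emptyset$, so we may fix $x_{0}\in N_{+}\subset L$. Since $K$ acts transitively on $L$, for every $y\in L$ there exists $k\in K$ with $k(x_{0})=y$, and by $K$-invariance $y=k(x_{0})\in N_{+}$. Thus $L\subset N_{+}$, and combined with $N_{+}\subset L$ we conclude $N_{+}=L$. The only genuinely non-routine ingredient is the intertwining identity for the shape operator, and this is a direct consequence of $(dk)_{x}$ commuting with $\bar{\nabla}$ and respecting the orthogonal decomposition from \S\ref{sect:I-H}\ref{tamotsu}; everything else is a pure orbit argument.
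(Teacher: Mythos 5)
Your proof is correct and follows essentially the same strategy as the paper: use the non-emptiness of $N_{+}$ from Lemma \ref{lemm:19}, the preservation of tangent/normal components under isometries, and the equivariance of the shape operator to conclude that the $K$-orbit of a point of $N_{+}$ stays in $N_{+}$, hence fills $L$. The only cosmetic difference is that you argue directly with the intersection over all $w\in T^{\bot}_{x}L$, whereas the paper tracks the image of the pseudo-orthonormal frame $\{P_{i}x\}$, identifies it with $\{Q_{i}k(x)\}$ for a new pseudo-orthonormal basis $\{Q_{i}\}$ of $\Sigma$, and then invokes Remark \ref{rema:Ntorikata} — the two bookkeeping choices are equivalent.
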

\begin{proof}
    We show that $L\subset N_{+}$.
    We suppose that $L$ is homogeneous, that is, there exists a subgroup $K$ of $O(s, 2l-s)$ which acts on $L$ transitively.
    By Lemma \ref{lemm:19}, we can take $x\in N_{+}$.
    By Remark \ref{rema:Ntorikata}, there exists 
    \begin{equation*}
        v
        \in 
        \bigcap_{i=1}^{m}
        \ker{S_{P_{i}x}}
        ,
    \end{equation*}
    such that $\langle v, v\rangle>0$.
    We take an arbitrary $k\in K$.
    Since $k$ is an isometry, $\{(dk)_{x}(P_{i}x)\}_{i=1}^{m}$ is a pseudo-orthonormal basis of $T^{\bot}_{k(x)}L$.
    Thus, there exists a pseudo-orthonormal basis $\{Q_{i}\}_{i=1}^{m}$ of $\Sigma$ such that $(dk)_{x}(P_{i}x)=Q_{i}k(x)$ for all $i\in \{1, \ldots, m\}$.
    We note that the shape operator is invariant under $K$.
    By the assumption of $v$, we have
    \begin{equation*}
        S_{Q_{i}k(x)}(dk)_{x}(v)
        =
        S_{(dk)_{x}(P_{i}x)}(dk)_{x}(v)
        =
        (dk)_{x}(S_{P_{i}x}v)
        =
        0
        ,\quad
        i\in \{1, \ldots, m\}
        .
    \end{equation*}
    Hence, we have $\langle (dk)_{x}(v), (dk)_{x}(v)\rangle>0$ and 
    \begin{equation*}
        (dk)_{x}(v)
        \in 
        \bigcap_{i=1}^{m}
        \ker{S_{Q_{i}k(x)}}
        .
    \end{equation*}
    By Remark \ref{rema:Ntorikata}, $k(x)\in N_{+}$ holds.
    Since $k$ is arbitrary, we have $L\subset N_{+}$.
\end{proof}
Therefore, we can find that $L$ is inhomogeneous.
\begin{proof}
    (The Proof of Theorem \ref{theo:inhomogeneous})
    By Proposition \ref{prop:22}, there exist $x\in E_{+}(P)\setminus \{0\}$ and $y\in E_{-}(P)\setminus \{0\}$ such that $x+y\in L$.
    Now, $x+y\notin E_{+}(P)\cup E_{-}(P)$ holds.
    By Proposition \ref{prop:21}, $x+y\notin N_{+}$ holds.
    This means that $N_{+}\subsetneqq L$.
    By the contrapositive of Proposition \ref{prop:20}, $L$ is inhomogeneous.
\end{proof}

\subsection{Inhomogeneity of Isoparametric Hypersurfaces}
Eventually, we have the following main theorem.
\begin{theorem}
    \label{theo:inhomogeneous-2}
    For a Clifford system $\{P_{i}\}_{i=1}^{m}\ (P_{i}\in \mathrm{Sym}(\mathbb{R}^{2l}_{s}))$ of signature $(m, r)$ satisfying $m\equiv 0\pmod{4}$, $r\equiv 0\pmod{2}$ and $l>m$, each connected component of $M_{c}\ (c\in \mathrm{W}_{\mathrm{RN}}(f)\cap (-1, \infty))$ is inhomogeneous.
\end{theorem}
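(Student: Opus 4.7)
The plan is to deduce Theorem \ref{theo:inhomogeneous-2} from Theorem \ref{theo:inhomogeneous} by transferring a hypothetical transitive action on a connected component of $M_c$ down to the focal variety $M_+$ via the focal projection. Concretely, I would fix a connected component $L_c \subset M_c$ and assume, for contradiction, that $L_c$ is homogeneous: there is a Lie subgroup $K \subset O(s, 2l-s)$ acting transitively on $L_c$ by ambient isometries. Using \S\ref{sect:I-H}\ref{taniseibun}, I would replace $K$ by its identity component $K_0$ and assume $K$ is connected.

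Next, I would introduce the focal projection
\[
    \pi_{\mathrm{foc}}\colon M_c \to M_+,\qquad \pi_{\mathrm{foc}}(p) := \gamma_{p,\xi_p}(t_c),
\]
with $\xi_p$ as in \eqref{eq:nv}. Using \eqref{eq:tvnv}, one verifies that $\pi_{\mathrm{foc}}$ coincides with $\mathrm{pr}_{M_+}\circ \varphi_{t_c}^{-1}$, where $\mathrm{pr}_{M_+}\colon \bot M_+(\delta)\to M_+$ is the bundle projection. Because $T^{\bot}M_+$ is trivial and $\bot M_+(\delta)$ is diffeomorphic to $M_+\times S^{m-1}_r$ or $M_+\times H^{m-1}_{r-1}$ by \S\ref{sect:F-V}, the connected components of $\bot M_+(\delta)$ are products of components of $M_+$ with components of the fiber. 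Hence $\varphi_{t_c}^{-1}(L_c)$ is of the form $L_+\times F_0$ for some connected component $L_+\subset M_+$ and a component $F_0$ of the fiber, so $\pi_{\mathrm{foc}}(L_c) = L_+$.

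The crucial step is the $K$-equivariance of $\pi_{\mathrm{foc}}|_{L_c}$. For $k \in K$ and $p \in L_c$, both $(dk)_p(\xi_p)$ and $\xi_{k(p)}$ are unit normals to $L_c$ at $k(p)$ by \S\ref{sect:I-H}\ref{tamotsu}, so they differ by a sign $\varepsilon(k, p) \in \{-1, +1\}$. The function $\varepsilon\colon K\times L_c \to \{-1, +1\}$ is continuous, the domain is connected (both $K$ and $L_c$ are), and $\varepsilon(\mathrm{id}, p) = +1$; hence $\varepsilon \equiv +1$. This yields $k\circ \gamma_{p, \xi_p} = \gamma_{k(p), \xi_{k(p)}}$, and therefore $k\circ \pi_{\mathrm{foc}} = \pi_{\mathrm{foc}}\circ k$ on $L_c$. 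Combined with surjectivity of $\pi_{\mathrm{foc}}\colon L_c\to L_+$ and transitivity of the $K$-action on $L_c$, this gives a transitive action of $K\subset O(s, 2l-s)$ on $L_+$, making $L_+$ a homogeneous connected component of $M_+$, contradicting Theorem \ref{theo:inhomogeneous}.

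The main obstacle is ensuring that the normal direction $\xi$ is preserved, not merely preserved up to sign; an arbitrary ambient isometry stabilizing $L_c$ might reverse $\xi$ and then send focal points in $M_+$ to focal points on the opposite side. Passing to $K_0$ and combining continuity of the sign function with connectedness of $K_0$ and of $L_c$ is what rescues the equivariance, after which Theorem \ref{theo:inhomogeneous} closes the argument.
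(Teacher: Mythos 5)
Your proof is correct and takes essentially the same approach as the paper: Lemma \ref{prop:4} uses connectedness of $K_{0}$ (tracking the sign of the unit normal along a path $\rho(s)$ from the identity to $k$) to show that $\varphi_{t_{c}}^{-1}$ is $K_{0}$-equivariant on base points, and Proposition \ref{prop:5} then transfers transitivity from $M_{c,L}$ down to $L$, after which Theorem \ref{theo:inhomogeneous} closes the argument exactly as you do. Your packaging via the focal projection $\pi_{\mathrm{foc}}$ and a global sign function on $K_{0}\times L_{c}$ is a cleaner phrasing of the same idea, and your contradiction is the contrapositive of the paper's Proposition \ref{prop:5}.
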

We give a proof of Theorem \ref{theo:inhomogeneous-2} which is divided into some steps.
We use the settings in \S\ref{sect:4-1}, \S\ref{sect:4-2} and \S\ref{sect:4-3}.
We fix $c\in \mathrm{W}_{\mathrm{RN}}(f)\cap (-1, \infty)$.
Let $\delta$ be the type of $M_{c}$.
Let 
$
    \bot L(\delta)
    :=
    \left\{
        (x, v)
        \ \middle|\
        x\in L,\ v\in T^{\bot}_{x}L,\ \langle v, v\rangle=\delta
    \right\}
$
and $M_{c, L}:=\varphi_{t_{c}}(\bot L(\delta))\subset M_{c}$.
\begin{lemma}
    \label{lemm:64}
    The set $M_{c, L}$ is a connected component of $M_{c}$.
\end{lemma}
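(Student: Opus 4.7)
The plan is to use the diffeomorphism $\varphi_{t_{c}}\colon \bot M_{+}(\delta)\to M_{c}$ from \S\ref{sect:F-V} and to show that $\bot L(\delta)$ is itself a connected component of $\bot M_{+}(\delta)$. Since diffeomorphisms carry connected components to connected components, this will immediately imply that $M_{c,L}=\varphi_{t_{c}}(\bot L(\delta))$ is a connected component of $M_{c}$.

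First I would exploit the global trivialization of $T^{\bot}M_{+}$ provided by the frame $\{P_{1}x,\ldots,P_{m}x\}$ (see \S\ref{sect:F-V}), which identifies $\bot M_{+}(\delta)\cong M_{+}\times F_{\delta}$, where $F_{1}=S^{m-1}_{r}$ if $\delta=1$ and $F_{-1}=H^{m-1}_{r-1}$ if $\delta=-1$. Under this identification $\bot L(\delta)\cong L\times F_{\delta}$, and it coincides with $\pi^{-1}(L)$ for the bundle projection $\pi\colon \bot M_{+}(\delta)\to M_{+}$. Since $L$ is a connected component of $M_{+}$, it is both open and closed in $M_{+}$, so $\bot L(\delta)$ is open and closed in $\bot M_{+}(\delta)$.

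Next I would check that $F_{\delta}$ is connected under the standing hypotheses $m\equiv 0\pmod{4}$ and $r\equiv 0\pmod{2}$. Because $r$ is even while $m$ is a multiple of $4$, we have $r\neq m-1$, and therefore $S^{m-1}_{r}\cong \mathbb{R}^{r}\times S^{m-1-r}$ is connected in the case $\delta=1$. In the case $\delta=-1$, the non-emptiness of $\bot L(-1)$ forces $r\geqq 1$, hence $r\geqq 2$ by parity, and $H^{m-1}_{r-1}\cong S^{r-1}\times \mathbb{R}^{m-r}$ is connected. Combined with the connectedness of $L$, this gives that $\bot L(\delta)$ is connected.

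Thus $\bot L(\delta)$ is a connected, open-and-closed subset of $\bot M_{+}(\delta)$, i.e.\ a connected component, and its image $M_{c,L}$ under the diffeomorphism $\varphi_{t_{c}}$ is a connected component of $M_{c}$. I do not anticipate any serious obstacle; the only point requiring care is the elementary parity case-analysis that guarantees connectedness of the fiber $F_{\delta}$, which is precisely where the hypotheses on $(m,r)$ are used.
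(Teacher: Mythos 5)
Your proposal is correct and follows essentially the same strategy as the paper: exploit the global trivialization $\bot M_{+}(\delta)\cong M_{+}\times F_{\delta}$, observe that the fiber $F_{\delta}$ is connected because the parity hypotheses rule out $r=m-1$ (when $\delta=1$) and $r=1$ (when $\delta=-1$), and then transport connected components through the diffeomorphism $\varphi_{t_{c}}$. The only cosmetic difference is that you argue uniformly via the clopen decomposition $\pi^{-1}(L)$, whereas the paper splits into the cases (a)--(d) and invokes Proposition \ref{prop:51} in cases (a), (b) and the equality $L=M_{+}$ in cases (c), (d); your version is a mild streamlining of the same argument.
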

\begin{proof}
    In \ref{case:c} and \ref{case:d}, $L=M_{+}$ holds.
    Thus, $M_{c, L}=M_{c}$ holds.

    We consider the cases of \ref{case:a} and \ref{case:b}.
    By Proposition \ref{prop:51}, all of the connected components of $M_{+}$ are $M_{+, 1}$ and $M_{+, 2}$.
    Here, we note that $S^{m-1}_{r}$ and $H^{m-1}_{r-1}$ are path-connected since $r\in \{0, 2, \ldots, m-2, m\}$.
    Since the base spaces $M_{+, 1}, M_{+, 2}$ and the fiber $S^{m-1}_{r}$\ ($\delta=1$) or $H^{m-1}_{r-1}$\ ($\delta=-1$) are path-connected, we can verify that $\bot M_{+, 1}(\delta)$ and $\bot M_{+, 2}(\delta)$ are path-connected, which are connected.
    Since $\varphi_{t_{c}}$ is a diffeomorphism, $\varphi_{t_{c}}(\bot M_{+, 1}(\delta))$ and $\varphi_{t_{c}}(\bot M_{+, 2}(\delta))$ are connected components of $M_{c}$.
\end{proof}
\begin{lemma}
    \label{prop:4}
    We assume that there exists a connected Lie subgroup $K_{0}\subset O(s, 2l-s)$ such that $K_{0}$ acts on $M_{c, L}$ transitively.
    We take arbitrary $x', y'\in M_{c, L}\subset S^{2l-1}_{s}$.
    We also take an arbitrary $k\in K_{0}$ such that $k(x')=y'$.
    There exist unique pairs $(x, v), (y, u)\in \bot L(\delta)$ such that $\varphi_{t_{c}}(x, v)=x'$ and $\varphi_{t_{c}}(y, u)=y'$.
    Then $k(x)=y$ holds.
\end{lemma}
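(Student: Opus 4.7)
The plan is to identify the footpoint $x$ on the focal variety as the image of $x'$ under the geodesic flow along the unit normal to $M_{c}$ for oriented distance $t_{c}$, and then to exploit the fact that the connected group $K_{0}$ preserves this normal field coherently. By \eqref{eq:tvnv} we have $\gamma_{x, v}'(t_{c})=-\xi_{x'}$, so time-reversing the geodesic yields $x=\exp_{x'}(t_{c}\xi_{x'})$; likewise $y=\exp_{y'}(t_{c}\xi_{y'})$.

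The crux is then to show $(dk)_{x'}\xi_{x'}=\xi_{y'}$ with the correct sign. For this I would fix a continuous path $\{k_{t}\}_{t\in [0,1]}$ from the identity $e$ to $k$ inside $K_{0}$, available because a connected Lie group is path-connected, and note that the whole path acts on $M_{c, L}$ by hypothesis. Set $\eta(t):=(dk_{t})_{x'}\xi_{x'}$. By \S\ref{sect:I-H}\ref{tamotsu} each $\eta(t)$ is a unit normal vector to $M_{c}$ at the point $k_{t}(x')\in M_{c, L}$, where the normal line is one-dimensional and where $\xi$ is globally smooth via the explicit formula \eqref{eq:nv}. Hence $\eta(t)=\pm \xi_{k_{t}(x')}$, so the continuous $\{-1, +1\}$-valued function $t\mapsto \delta\langle \eta(t), \xi_{k_{t}(x')}\rangle$ is constant; evaluating at $t=0$ gives the value $+1$, and hence at $t=1$ we obtain $(dk)_{x'}\xi_{x'}=\xi_{y'}$.

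Finally, because $k$ is a linear isometry of $\mathbb{R}^{2l}_{s}$, it intertwines the exponential maps of $S^{2l-1}_{s}$, in the sense that $k\circ \exp_{x'}=\exp_{k(x')}\circ (dk)_{x'}$. Applying this,
\[
    k(x)
    =
    k\bigl(\exp_{x'}(t_{c}\xi_{x'})\bigr)
    =
    \exp_{y'}\bigl(t_{c}(dk)_{x'}\xi_{x'}\bigr)
    =
    \exp_{y'}(t_{c}\xi_{y'})
    =
    y,
\]
which is the desired equality.

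The main obstacle I anticipate is the sign issue at the second step: a priori $(dk)_{x'}\xi_{x'}$ could equal $-\xi_{y'}$ since the non-degenerate hypersurface $M_{c, L}$ admits two choices of unit normal at each point. The hypothesis that $K_{0}$ is connected is precisely what rules this out, but the argument requires verifying that $\xi_{k_{t}(x')}$ depends continuously on $t$, which is immediate from the explicit expression \eqref{eq:nv} combined with the fact that $k_{t}(x')$ stays within the single connected component $M_{c, L}$ throughout the path.
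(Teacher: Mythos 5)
Your proof is correct and is essentially the same as the paper's: both arguments hinge on taking a path $\{k_t\}$ in $K_0$ from the identity to $k$ and using continuity to rule out a sign flip of the normal vector along the path, then invoking uniqueness of geodesics (or equivalently the intertwining of exponential maps). The only difference is cosmetic: you time-reverse the geodesic and match $\xi_{x'}$ with $\xi_{y'}$, whereas the paper pushes the forward geodesic $\gamma_{x,v}$ and matches $\gamma_{x,v}'(t_c)=-\xi_{x'}$ with $\gamma_{y,u}'(t_c)$.
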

\begin{proof}
    Since $K_{0}$ is path-connected, there exists a path $\rho \colon [0, 1]\to K_{0}$ such that $\rho(0)=e,\ \rho(1)=k$, where $e$ is the identity of $K_{0}$.
    The curve $\psi_{s}\colon \mathbb{R}\to S^{2l-1}_{s}$ is given by $\psi_{s}=\rho(s)(\gamma_{x, v}(t))$ for $s\in [0, 1]$.
    We fix $s\in [0, 1]$.
    We have $\psi_{s}(0)=\rho(s)(x)$ and $\psi_{s}'(0)=(d\rho(s))_{x}(\gamma_{x, v}'(0))=(d\rho(s))_{x}(v)$.
    Since $\rho(s)$ is an isometry, $\psi_{s}=\gamma_{\rho(s)(x), (d\rho(s))_{x}(v)}$ holds.
    By \S\ref{sect:F-V}\eqref{eq:tvnv} and \S\ref{sect:I-H}\ref{tamotsu}, we have $\psi_{s}(t_{c})=\rho(s)(x')\in M_{c, L}$ and $\psi_{s}'(t_{c})=(d\rho(s))_{x'}(-\xi_{x'})\in T^{\bot}_{\rho(s)(x')}M_{c, L}$.
    We note that the codimension of $M_{c, L}\subset S^{2l-1}_{s}$ is $1$.
    Since $\psi_{0}'(t_{c})=-\xi_{x'}$ holds and $\rho$ is continuous, we have $\psi_{s}'(t_{c})=-\xi_{\rho(s)(x')}$.
    For $s=1$, we have $\psi_{1}(t_{c})=k(x')=y'=\gamma_{y, u}(t_{c})$ and $\psi_{1}'(t_{c})=\gamma_{y, u}'(t_{c})$.
    By uniqueness of geodesics, we have $\psi_{1}=\gamma_{y, u}$.
    Therefore, we have $k(x)=\psi_{1}(0)=\gamma_{y, u}(0)=y$.
\end{proof}
\begin{proposition}
    \label{prop:5}
    If $M_{c, L}$ is homogeneous, then $L$ is homogeneous.
\end{proposition}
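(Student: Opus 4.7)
The plan is to use Lemma \ref{prop:4} to transfer the homogeneity of $M_{c,L}$ back to $L$ via the focal map $\varphi_{t_c}$. Suppose $M_{c,L}$ is homogeneous. Then by \S\ref{sect:I-H}\ref{taniseibun}, there exists a connected Lie subgroup $K_0\subset O(s, 2l-s)$ that acts transitively on $M_{c, L}$. I will show that the same group $K_0$ restricts to a transitive action on $L$.

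First, I would verify that $K_0$ preserves $L$. Fix $x\in L$ and $k\in K_0$. Since $\bot L(\delta)$ fibers over $L$ with non-empty fiber (the fiber $\{v\in T^{\bot}_{x}L\mid \langle v,v\rangle =\delta\}$ is diffeomorphic to $S^{m-1}_{r}$ if $\delta =1$ or $H^{m-1}_{r-1}$ if $\delta =-1$, and these are non-empty exactly as in \S\ref{sect:F-V}), we can pick some $v\in T^{\bot}_{x}L$ with $\langle v,v\rangle =\delta$. Put $x':=\varphi_{t_c}(x,v)\in M_{c,L}$. Since $K_0$ acts on $M_{c,L}$, we have $k(x')\in M_{c,L}=\varphi_{t_c}(\bot L(\delta))$, so there is a unique $(y,u)\in \bot L(\delta)$ with $\varphi_{t_c}(y,u)=k(x')$. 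Lemma \ref{prop:4} then gives $k(x)=y$, and $y\in L$ by construction. Hence $k(L)\subset L$ for every $k\in K_0$, so $K_0$ acts on $L$.

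Next, I would show transitivity on $L$. Given arbitrary $x,y\in L$, pick $v\in T^{\bot}_{x}L$ and $u\in T^{\bot}_{y}L$ with $\langle v,v\rangle =\langle u,u\rangle =\delta$, and set $x':=\varphi_{t_c}(x,v),\ y':=\varphi_{t_c}(y,u)\in M_{c,L}$. Because $K_0$ acts transitively on $M_{c,L}$, there exists $k\in K_0$ with $k(x')=y'$. Applying Lemma \ref{prop:4} to this $k$ and the pairs $(x,v),(y,u)$ yields $k(x)=y$. Since $x,y\in L$ were arbitrary and $K_0$ has already been shown to act on $L$, this completes the proof that $L$ is homogeneous.

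There is no real obstacle beyond carefully matching the hypotheses of Lemma \ref{prop:4}; the only small point that requires attention is the non-emptiness of the fibers of $\bot L(\delta)\to L$, which is guaranteed by the conditions on $(\delta,r)$ noted in \S\ref{sect:F-V}. All of the geometric heavy lifting — the fact that an isometry of $S^{2l-1}_{s}$ carrying the focal parameter forward also carries the corresponding footpoints forward — has already been done in Lemma \ref{prop:4}, so the proof of Proposition \ref{prop:5} is essentially a direct application.
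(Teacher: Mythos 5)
Your proof is correct and follows essentially the same strategy as the paper: pass to the identity component $K_{0}$, use $\varphi_{t_{c}}$ and Lemma \ref{prop:4} to show $K_{0}$ preserves $L$, then use the same mechanism to transport transitivity on $M_{c,L}$ to transitivity on $L$. The only detail you implicitly rely on without stating is the connectedness of $M_{c,L}$ (Lemma \ref{lemm:64}), which is what justifies replacing $K$ by its identity component via \S\ref{sect:I-H}\ref{taniseibun}.
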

\begin{proof}
    We assume that $M_{c, L}$ is homogeneous, that is, there exists a Lie subgroup $K\subset O(s, 2l-s)$ such that $K$ acts on $M_{c, L}$ transitively. 
    Since $M_{c, L}$ is connected, the identity component $K_{0}\subset K$ also acts on $M_{c, L}$ transitively by \S\ref{sect:I-H}\ref{taniseibun}.
    
    We show that $K_{0}$ acts on $L$ transitively.
    We take arbitrary $k\in K_{0},\ x\in L$.
    We take an arbitrary $v\in (\bot L(\delta))_{x}$.
    Let $x':=\varphi_{t_{c}}(x, v)$.
    Since $K_{0}$ acts on $M_{c, L}$ transitively, $y':=k(x')\in M_{c, L}$.
    Since $\varphi_{t_{c}}$ is bijection, there exists a unique pair $(y, u)\in \bot L(\delta)$ such that $y'=\varphi_{t_{c}}(y, u)$.
    By Lemma \ref{prop:4}, we have $k(x)=y\in L$.
    The mapping $\mathcal{L}\colon K_{0}\times L\to L$ is given by $\mathcal{L}(k, x):=k(x)$.
    Since $L$ is $K_{0}$-invariant, $\mathcal{L}$ is a smooth action.
    We show that $\mathcal{L}$ is transitive.
    It suffices to show that $L\subset \mathcal{L}(K_{0}, x)$ for a point $x\in L$.
    We fix $x\in L$.
    We take an arbitrary $v\in (\bot L(\delta))_{x}$.
    Let $x':=\varphi_{t_{c}}(x, v)$.
    We take an arbitrary $y\in L$.
    We take an arbitrary $u\in (\bot L(\delta))_{y}$.
    Let $y':=\varphi_{t_{c}}(y, u)$.
    Since $K_{0}$ acts on $M_{c, L}$ transitively, there exists $k\in K_{0}$ such that $k(x')=y'$.
    By Lemma \ref{prop:4}, we have $y=k(x)\in \mathcal{L}(K_{0}, x)$.
    Since $y$ is arbitrary, $L\subset \mathcal{L}(K_{0}, x)$ holds.

    Therefore, $\mathcal{L}$ is a transitive group action of $K_{0}$ on $L$, that is, $L$ is homogeneous.
\end{proof}
\begin{proof}
    (The Proof of Theorem \ref{theo:inhomogeneous-2})
    By Theorem \ref{theo:inhomogeneous}, each connected component $L\subset M_{+}$ is inhomogeneous.
    By the contrapositive of Proposition \ref{prop:5}, $M_{c, L}$ is inhomogeneous.
\end{proof}

\section{Some Examples}
By Theorems \ref{theo:C-exist} and \ref{theo:inhomogeneous-2}, we obtain a lot of examples of inhomogeneous isoparametric hypersurfaces of OT-FKM-type.
Since we can take an arbitrary positive integer $d$ in Proposition \ref{prop:9}, it is not difficult to satisfy the condition $l>m$ which is the assumption in Theorems \ref{theo:inhomogeneous} and \ref{theo:inhomogeneous-2}.
In addition, a Clifford system of signature $(m, r)$ on $\mathbb{R}^{2l}_{s}$ satisfying $m\equiv 0\pmod{4}$, $r\equiv 0\pmod{2}$ and $d=1$ which is given by Propositions \ref{prop:8} and \ref{prop:9} satisfies $l>m$ except for $(m, r)=(4, 2)$.

The idea of the diffeomorphism type of \S\ref{sect:Example---1} and \S\ref{sect:Example---2} is based on \cite[Theorem 1.2, Theorem 1.3]{MR4599604}.

\subsection{Example---1}
\label{sect:Example---1}
Let $\{P_{i}\}_{i=1}^{4}$ be the Clifford system of signature $(4, 0)$ which is made from $A^{(2, 2)}_{1}$ and $A^{(2, 2)}_{2}$ in Lemma \ref{lemm:15} by using Lemma \ref{lemm:14} and Proposition \ref{prop:9}.
Since $s=l=8$, $d=1$ and $(m, r)=(4, 0)$, $\{P_{i}\}$ is in \ref{case:a}.
By Proposition \ref{prop:51}, all of connected components of $M_{+}$ are $M_{+, 1}$ and $M_{+, 2}$.
By \eqref{rema:63}, $\mathrm{W}_{\mathrm{RN}}(f)\cap (-1, \infty)=(-1, 1)$.
By Theorem \ref{theo:inhomogeneous-2}, the subsets $M_{c, M_{+, 1}},\ M_{c, M_{+, 2}}\subset S^{2l-1}_{s}$ are inhomogeneous isoparametric hypersurfaces of OT-FKM-type.

Let $h:=\left.P_{4}\right|_{M_{+, 1}}$.
Then $h(M_{+, 1})=M_{+, 2}$ holds and $h\colon M_{+, 1}\to M_{+, 2}$ is a diffeomorphism.
Thus, $\bot M_{+, 1}(1)\cong \bot M_{+, 2}(1)$ holds since $M_{+, 1}\cong M_{+, 2}$ holds.
Hence, $M_{c_{1}, M_{+, 1}}\cong M_{c_{2}, M_{+, 2}}$ holds for $c_{1}, c_{2}\in (-1, 1)$

Let
$
    (S^{4}_{4})_{+}
    :=
    \left\{
        (y^{1}, y^{2}, y^{3}, y^{4}, y^{5})
        \in S^{4}_{4}
        \ \middle|\ 
        y^{5}\geqq 1
    \right\}
    .
$
Then $M_{+, 1}$ is diffeomorphic to $S^{7}_{4}\times (S^{4}_{4})_{+}$.
In fact, we can make the diffeomorphism from $M_{+, 1}$ to $S^{7}_{4}\times (S^{4}_{4})_{+}$.
Let $P:=P_{1}P_{2}P_{3}P_{4}$.
Let 
\begin{align*}
    \bm{a}_{1}
    &:=
    \frac{\bm{e}_{1}+\bm{e}_{8}}{\sqrt{2}},
    &
    \bm{a}_{2}
    &:=
    \frac{\bm{e}_{2}+\bm{e}_{7}}{\sqrt{2}},
    &
    \bm{a}_{3}
    &:=
    \frac{\bm{e}_{3}-\bm{e}_{6}}{\sqrt{2}},
    &
    \bm{a}_{4}
    &:=
    \frac{\bm{e}_{4}-\bm{e}_{5}}{\sqrt{2}},\\
    \bm{a}_{5}
    &:=
    \frac{\bm{e}_{9}+\bm{e}_{16}}{\sqrt{2}},
    &
    \bm{a}_{6}
    &:=
    \frac{\bm{e}_{10}+\bm{e}_{15}}{\sqrt{2}},
    &
    \bm{a}_{7}
    &:=
    \frac{\bm{e}_{11}-\bm{e}_{14}}{\sqrt{2}},
    &
    \bm{a}_{8}
    &:=
    \frac{\bm{e}_{12}-\bm{e}_{13}}{\sqrt{2}}
    .
\end{align*}
Then $\{\bm{a}_{i}\}_{i=1}^{8}$ is a pseudo-orthonormal basis of $E_{+}(P)$.
\begin{align*}
    \bm{b}_{1}
    &:=
    \frac{\bm{e}_{1}-\bm{e}_{8}}{\sqrt{2}},
    &
    \bm{b}_{2}
    &:=
    \frac{\bm{e}_{2}-\bm{e}_{7}}{\sqrt{2}},
    &
    \bm{b}_{3}
    &:=
    \frac{\bm{e}_{3}+\bm{e}_{6}}{\sqrt{2}},
    &
    \bm{b}_{4}
    &:=
    \frac{\bm{e}_{4}+\bm{e}_{5}}{\sqrt{2}},\\
    \bm{b}_{5}
    &:=
    \frac{\bm{e}_{9}-\bm{e}_{16}}{\sqrt{2}},
    &
    \bm{b}_{6}
    &:=
    \frac{\bm{e}_{10}-\bm{e}_{15}}{\sqrt{2}},
    &
    \bm{b}_{7}
    &:=
    \frac{\bm{e}_{11}+\bm{e}_{14}}{\sqrt{2}},
    &
    \bm{b}_{8}
    &:=
    \frac{\bm{e}_{12}+\bm{e}_{13}}{\sqrt{2}}
    .
\end{align*}
Then $\{\bm{b}_{j}\}_{j=1}^{8}$ is a pseudo-orthonormal basis of $E_{-}(P)$.
For any $z\in M_{+, 1}$, we can express
\begin{equation*}
    z_{+}
    :=
    \sum_{i=1}^{8}c^{i}\bm{a}_{i}
    ,\quad
    z_{-}
    :=
    \sum_{j=1}^{8}d^{j}\bm{b}_{j}
    ,\quad
    z
    =
    z_{+}+z_{-}
    ,\quad
    -\sum_{i=1}^{4}(c^{i})^{2}+\sum_{i=5}^{8}(c^{i})^{2}\geqq 1
    .
\end{equation*}
We note that the condition $\langle P_{i}z_{+}, z_{-}\rangle=0\ (i\in \{1, 2, 3, 4\})$ can be expressed by using $\{c^{i}\}_{i=1}^{8}, \{d^{j}\}_{j=1}^{8}$.
Let $\bm{c}=\bm{c}(z):=(c^{1}, c^{2}, \ldots, c^{8})$ and 
\begin{equation*}
    (A^{i}_{j}(\bm{c}))_{1\leqq i, j\leqq 4}
    :=
    \begin{pmatrix}
        c^{7}&-c^{8}&-c^{5}&c^{6}\\
        -c^{6}&c^{5}&-c^{8}&c^{7}\\
        c^{5}&c^{6}&c^{7}&c^{8}\\
        -c^{8}&-c^{7}&c^{6}&c^{5}
    \end{pmatrix}
    ^{-1}
    \begin{pmatrix}
        -c^{3}&c^{4}&c^{1}&-c^{2}\\
        c^{2}&-c^{1}&c^{4}&-c^{3}\\
        -c^{1}&-c^{2}&-c^{3}&-c^{4}\\
        c^{4}&c^{3}&-c^{2}&-c^{1}
    \end{pmatrix}
    ,
\end{equation*}
\begin{align*}
    \bm{q}_{i}(\bm{c})
    &:=
    \frac{1}{\sqrt{1-{\displaystyle \sum_{j=1}^{4}(A^{j}_{i}(\bm{c}))^{2}}}}
    \begin{bmatrix}
        \bm{b}_{1}&\bm{b}_{2}&\bm{b}_{3}&\bm{b}_{4}&\bm{b}_{5}&\bm{b}_{6}&\bm{b}_{7}&\bm{b}_{8}
    \end{bmatrix}
    \begin{pmatrix}
        \delta_{i1}\\
        \delta_{i2}\\
        \delta_{i3}\\
        \delta_{i4}\\
        -A^{1}_{i}(\bm{c})\\
        -A^{2}_{i}(\bm{c})\\
        -A^{3}_{i}(\bm{c})\\
        -A^{4}_{i}(\bm{c})
    \end{pmatrix}
    ,\quad
    i\in \{1, 2, 3, 4\}
    .
\end{align*}
We can verify that $\bm{q}_{i}(\bm{c})\in E_{-}(P)\ (i\in \{1, 2, 3, 4\})$, $\langle \bm{q}_{i}(\bm{c}), \bm{q}_{j}(\bm{c})\rangle=-\delta_{ij}\ (i, j\in \{1, 2, 3, 4\})$ and
\begin{equation*}
    \left\{
        w\in E_{-}(P)
        \ \middle|\ 
        \langle P_{i}z_{+}, w\rangle=0
        \ (\forall i\in \{1, 2, 3, 4\})
    \right\}
    =
    \mathrm{span}
    \left\{
        \bm{q}_{i}
        \ \middle|\ 
        i\in \{1, 2, 3, 4\}
    \right\}
    .
\end{equation*}
Here, let
\begin{align*}
    y^{5}(z)
    &:=
    \sqrt{
        -\sum_{i=1}^{4}(c^{i})^{2}+\sum_{i=5}^{8}(c^{i})^{2}
    }
    ,&
    \bm{x}(z)
    &:=
    \frac{\bm{c}(z)}{y^{5}(z)}
    ,\\
    y^{j}(z)
    &:=
    -\langle z_{-}, \bm{q}_{j}(\bm{c}(z))\rangle
    \ (j\in \{1, 2, 3, 4\})
    ,&
    \bm{y}(z)
    &=
    (y^{1}(z), y^{2}(z), y^{3}(z), y^{4}(z), y^{5}(z))
    .
\end{align*}
We can verify that $\bm{x}(z)\in S^{7}_{4}$ and $\bm{y}(z)\in (S^{4}_{4})_{+}$ hold, and the mapping $M_{+, 1}\ni z\mapsto (\bm{x}(z), \bm{y}(z))\in S^{7}_{4}\times (S^{4}_{4})_{+}$ is a diffeomorphism.

Let $(S^{4}_{4})_{-}:=S^{4}_{4}\setminus (S^{4}_{4})_{+}.$
Since $(S^{4}_{4})_{+}\cong (S^{4}_{4})_{-}$ holds, $M_{+, 2}\cong S^{7}_{4}\times (S^{4}_{4})_{-}$ holds.
Thus, $M_{+}\cong S^{7}_{4}\times S^{4}_{4}$ holds.
Hence, $M_{c, M_{+, 1}}\cong S^{7}_{4}\times (S^{4}_{4})_{+}\times S^{3}_{0}$ and $M_{c, M_{+, 2}}\cong S^{7}_{4}\times (S^{4}_{4})_{-}\times S^{3}_{0}$ hold.
Therefore, $M_{c}\cong S^{7}_{4}\times S^{4}_{4}\times S^{3}_{0}$ holds.

\subsection{Example---2}
\label{sect:Example---2}
Let $\{P_{i}\}_{i=1}^{4}$ be the Clifford system of signature $(4, 4)$ which is made from $A^{(2, 0)}_{1}$ and $A^{(2, 0)}_{2}$ in Lemma \ref{lemm:15} by using Lemma \ref{lemm:13} and Proposition \ref{prop:9}.
Since $s=l=8$, $d=1$, $(m, r)=(4, 4)$ and $(s_{1}, s_{2})=(4, 4)$, $\{P_{i}\}$ is in \ref{case:d-3}.
By Proposition \ref{prop:53}, $M_{+}$ is connected.
By \eqref{rema:63}, $\mathrm{W}_{\mathrm{RN}}(f)\cap (-1, \infty)=(1, \infty)$.
By Theorem \ref{theo:inhomogeneous-2}, $M_{c}$ is an inhomogeneous isoparametric hypersurface of OT-FKM-type.

We can show that $M_{+, 1}=E_{+}(P)\cap S^{15}_{8}$, $M_{+, 2}=E_{-}(P)\cap S^{15}_{8}$, $M_{+}\cong S^{7}_{4}\times S^{4}_{0}$ and $M_{c}\cong S^{7}_{4}\times S^{4}_{0}\times H^{3}_{3}$ hold.
The proof is similar to \S \ref{sect:Example---1}.

\bibliographystyle{abbrv}
\bibliography{article1}

\end{document}